\newcommand{\bea}{\begin{eqnarray}}
\newcommand{\eea}{\end{eqnarray}}
\newtheorem{theorem}{Theorem}[section]
\newtheorem{remark}[theorem]{Remark}
\newtheorem{lemma}[theorem]{Lemma}
\newtheorem{definition}[theorem]{Definition}
\numberwithin{equation}{section}
\newcommand{\R}{\mathbb R}
\newcommand{\N}{\mathbb N}
\newcommand{\V}{\mathbb V}
\newcommand{\be}{\begin{equation}}
\newcommand{\ee}{\end{equation}}
\newcommand{\ba}{\begin{eqnarray}}
\newcommand{\ea}{\end{eqnarray}}
\newcommand{\beq}{\begin{equation}}
\newcommand{\eeq}{\end{equation}}
\numberwithin{equation}{section}
\def\Omc{\R\setminus (-1,1)}
\def\RR{{\mathbb{R}}}
\def\NN{{\mathbb{N}}}
\def\N{{\mathbb{N}}}
\def\V{{\mathbb{V}}}
\def\Om{-1,1}
\def\bbOm{\overline{\Omega}}
\newcommand{\fl}[2]{(-\partial_x^{\,2})^#1#2}
\definecolor{red}{rgb}{0,0,0}
\keywords{Fractional heat equation, exterior control, null controllability, positivity constraints.}
\subjclass[2010]{35R11, 35S11, 35K05, 93B05, 93B07, 93C20.}
\begin{document}

\title[Nonlocal heat equation]{Controllability properties from the exterior under positivity constraints for a 1-D fractional heat equation}

\author{Harbir Antil}
\address{H. Antil, Department of Mathematical Sciences, George Mason University, Fairfax, VA 22030, USA.}
\email{hantil@gmu.edu}

\author{Umberto Biccari}
\address{U. Biccari,Chair of Computational Mathematics, Fundaci\'on Deusto Av. de las Universidades 24, 48007 Bilbao, Basque Country, Spain.
Facultad de Ingenier\'ia, Universidad de Deusto, Avenida de las Universidades 24, 48007 Bilbao, Basque Country, Spain.}
\email{umberto.biccari@deusto.es, u.biccari@gmail.com}

\author{Rodrigo Ponce}
\address{R. Ponce, Universidad de Talca, Instituto de Matem\'atica y F\'isica, Casilla 747, Talca, Chile.}
\email{rponde@inst-mat.utalca.cl, rodrigo.poncec@gmail.com}

\author{Mahamadi Warma}
\address{M. Warma, University of Puerto Rico, Rio Piedras Campus, Department of Mathematics, Faculty of Natural Sciences,  17 University AVE. STE 1701  San Juan PR 00925-2537 (USA)}
\email{mahamadi.warma1@upr.edu, mjwarma@gmail.com }

\author{Sebasti\'an Zamorano}
\address{S. Zamorano, Universidad de Santiago de Chile, Departamento de Matem\'atica y Ciencia de la Computaci\'on, Facultad de Ciencia, Casilla 307-Correo 2, Santiago, Chile.}
\email{sebastian.zamorano@usach.cl}

\thanks{This project has received funding through UB from the European Research Council (ERC) under the European Union's Horizon 2020 research and innovation programme (grant agreement NO. 694126-DyCon). The work of HA is partially supported by the NSF grants DMS-1818772,
DMS-1913004, and the Air Force Office of Scientific Research under Award NO: FA9550-19-1-0036.
The work UB and MW is partially supported by the Air Force Office of Scientific Research under Award NO: FA9550-18-1-0242. The work of UB is partially supported by the Grant MTM2017-92996-C2-1-R COSNET of MINECO (Spain) and by the ELKARTEK project KK-2018/00083 ROAD2DC of the Basque Government. The work of SZ is supported by the Fondecyt Postdoctoral Grant NO: 3180322
}

\begin{abstract}
We study the controllability to trajectories, under positivity constraints on the control or the state, of a one-dimensional heat equation involving the fractional Laplace operator $ (-\partial_x^2)^s$ (with $0<s<1$) on the interval $(-1,1)$. Our control function is localized in an open set $\mathcal O$ in the exterior of $(-1,1)$, that is, $\mathcal O\subset (\Omc)$. We show that there exists a minimal (strictly positive) time $T_{\rm min}$ such that the fractional heat dynamics can be controlled from any initial datum in  $L^2(-1,1)$ to a positive trajectory through the action of an exterior positive control, if and only if $\frac 12<s<1$. In addition, we prove that at this minimal controllability time, the constrained controllability is achieved by means of a control that belongs to a certain space of Radon measures. Finally, we provide several numerical illustrations that confirm our theoretical results.
\end{abstract}

\maketitle

\section{Introduction}

In this paper we are concerned with the constrained controllability from the exterior of the one-dimensional heat equation associated with the fractional Laplacian on $(-1,1)$. More precisely, we consider the system
\begin{align}\label{eq-main}
	\begin{cases}
		\partial_t u + (-\partial_x^2)^{s} u = 0 & \mbox{ in }\; (-1,1)\times(0,T),
		\\
		u=g\chi_{\mathcal{O}\times(0,T)} &\mbox{ in }\; (\Omc)\times (0,T), 
		\\
		u(\cdot,0) = u_0 &\mbox{ in }\; (-1,1),
	\end{cases}
\end{align}
where $u = u(x,t)$ is the state to be controlled, $0<s<1$ is a real number, $\fl{s}$ denotes the fractional Laplace operator defined for a sufficiently smooth function $v$ by the following singular integral (see Section \ref{sec-2} for more details):  
\begin{align*}
	\fl{s}{v}(x):= C_s\,\mbox{P.V.}\int_{\RR}\frac{v(x)-v(y)}{|x-y|^{1+2s}}\,dy,\;\;x\in \RR,
\end{align*}
and $g = g(x,t)$ is the exterior control function which is localized in a nonempty open subset $\mathcal O$ of $\R\setminus(-1,1)$. 

Our principal goal is to analyze whether the parabolic equation \eqref{eq-main} can be driven from any given initial datum $u_0\in L^2(-1,1)$ to a desired final target by means of the control action, but preserving some non-negativity constraints on the control and/or the state variables. 

The controllability property of fractional heat equations is only recent.
For instance, in \cite{BiHe} it has been shown that, in the absence of constraints, the fractional heat equation is null-controllable from the interior with an $L^2$-control localized in any open set $\omega\subset (-1,1)$, and in any time $T>0$, provided that $\frac 12<s<1$. This has been extended to the constrained controllability case in \cite{biccari2019controllability}, where the authors have shown that the equation is null controllable (hence, controllable to trajectories) with positive $L^\infty$-controls,  for any $\frac 12<s<1$ and any open set $\omega\subset (-1,1)$, provided that the time horizon $T$ for the null-controllability is sufficiently large. The results obtained in  \cite{BiHe,biccari2019controllability} are also valid for the so-called fractional $s$-power of the realization in $L^2(-1,1)$ of the Laplace operator $-\partial_x^2$ with the zero Dirichlet boundary condition. The latter case was first investigated in \cite{Miller}. 

The exterior unconstrained controllability properties of \eqref{eq-main} have been analyzed in  \cite{WaZa1} where the authors obtained analogous results to the ones in the aforementioned papers (that is, null-controllability in any time $T>0$ if and only if $\frac 12<s<1$), but this time by means of an $L^2$-control function acting from the exterior of the domain where the PDE is satisfied. We mention that, as it has been shown in \cite{War-ACE}, a boundary control (that is, the case where the control $g$ is localized in a subset of the boundary) does not make sense for the fractional Laplacian. This is due to the non-locality of the operator and the fact that the fractional heat equation with boundary conditions (Dirichlet, Neumann or Robin) is ill-posed. For this reason, for problems involving the fractional Laplacian the correct notion of a boundary controllability is actually the exterior one, requiring that the control function must be localized outside the domain where the PDE is satisfied, as in the system \eqref{eq-main}. 

For completeness, we also mention that the controllability properties of the fractional heat equation in open subsets of $\RR^N$ ($N\ge 2$) are still not fully understood by the mathematical community. The classical tools (see e.g. \cite{Zua1} and the references therein) like the Carleman estimates usually used to study the controllability for heat equations are still not available for the fractional Laplacian in bounded domains (except in the whole space $\RR^N$). For this reason, in the multi-dimensional case, the best possible controllability result currently available for the fractional heat equation is the approximate controllability recently obtained in \cite{War-ACE} for interior controls and in \cite{WarC} for exterior controls. However, there are multidimensional results on the interior \cite{antil2017optimal} and the exterior optimal control problems \cite{antil2019external,antil2019external1}.

As we said above, the main concern of the present paper is to investigate if it is possible to control from the exterior of $(-1,1)$, the fractional heat dynamics \eqref{eq-main} from any initial datum $u_0\in L^2(-1,1)$ to any positive trajectory $\widehat u$, under positivity constraints on the control and/or the state. This delicate question has been formulated in  \cite{biccari2019controllability} as an open problem.  A complete answer of this question is provided in the present paper. In more detail, the key novelties and the specific results we obtained are as follows: 
\begin{itemize}
	\item[(i)] First, we show in Theorem \ref{main1} that if $\frac 12<s<1$, then the system \eqref{eq-main}
	is controllable from any given initial datum in $L^2(-1,1)$ to zero (and, by translation, to trajectories) in any time $T>0$ by means of $L^\infty$-controls supported in $\mathcal O\subset(\Omc)$. This extends considerably the analysis of \cite{WaZa1}, where only the classical case of $L^2$-controls was considered. The proof will use the canonical approach of reducing the question of controllability with an $L^\infty$-control to a dual observability problem in $L^1$, and the use of Fourier series expansions to obtain a new result on the $L^1$-observation of linear combinations of real exponentials. Notice that, contrary to the case of interior controls, for the exterior control, the $L^1$-observability inequality involves the non-local normal derivative (see \eqref{NLND}) of solutions to the adjoint equation. This normal derivative being a non-local operator makes the problem investigated here more challenging.
	
	\item[(ii)] Secondly, as a consequence of our first result,  in Theorems \ref{main3} and \ref{main4}, we establish the existence of a minimal (strictly positive) time $T_{\rm min}$ such that the fractional heat dynamics \eqref{eq-main} can be controlled to positive trajectories through the action of a positive $L^\infty$-control. Moreover, if the initial datum is assumed to be positive as well, then the maximum principle guarantees the positivity of the states too. 
	
	\item[(iii)] Thirdly, we prove in Theorem \ref{radon measure} that, in the minimal controllability time $T_{\rm min}$, the controllability to positive trajectories holds through the action of a positive control in a space of Radon measures. 
	
	\item[(iv)] Finally, we mention that we have not been successful to have an analytic lower bound of the minimal controllability time $T_{\rm min}$. We accomplish this with the help of some numerical simulations in Section \ref{sec-num}. Notice also that the mentioned numerical simulations shall confirm all our theoretical results. We emphasize that we impose the exterior condition using the approach introduced in \cite{antil2019external,antil2019external1}.
\end{itemize}

In many realistic applications, the control is placed outside the domain where a PDE is fulfilled. Some examples of problems where this \emph{may be of relevance}, noticing that currently local models are used to capture these applications, are:
\begin{itemize}
	\item[(a)] Magnetic drug delivery: the drug with ferromagnetic particles is injected in the body and an external magnetic field is used to steer it to a desired location.
	\item[(b)] Acoustic testing: the aerospace structures are subjected to the sound from the loudspeakers.
\end{itemize}

We refer to \cite{antil2019external,antil2019external1} and their references for a further discussion and the derivation of the exterior optimal control. Let us also mention that the present work is the only available one on constrained controllability properties from the exterior for fractional evolution equations.

Fractional order operators (in particular the fractional Laplace operator) have recently emerged as a modelling alternative in various branches of science.  They usually describe anomalous diffusion. A number of stochastic models for explaining anomalous diffusion have been introduced in the literature. Among them we quote the fractional Brownian motion, the continuous time random walk, the L\'evy flights, the Schneider gray Brownian motion, and more generally, random walk models based on evolution equations of single and distributed fractional order in  space (see e.g. \cite{DS,GR,Man,Sch}). In general, a fractional diffusion operator corresponds to a diverging jump length variance in the random walk. See also \cite{HAntil_CNRautenberg_2019a,CWeiss_BvBWaanders_HAntil_2018a} for the relevance of fractional operators in geophysics and imaging science.

In many PDEs models some constraints need to be imposed when considering concrete applications. This is for instance the case of diffusion processes (heat conduction, population dynamics, etc.) where realistic models have to take into account that the state represents some physical quantity which must necessarily remain positive (see e.g. \cite{chan1990optimal}). This topic is also related to some other relevant applications, like the optimal management of compressors in gas transportation networks requiring the preservation of severe safety constraints (see e.g. \cite{colombo2009optimal,martin2006mixed,steinbach2007pde}). Finally, this issue is also important in other PDEs problems based on scalar conservation laws, including (but not limited to) the Lighthill-Whitham and Richards traffic flow models (\cite{colombo2004minimising,lighthill1955kinematic,richards1956shock}) or the isentropic compressible Euler equation (\cite{glass2007controllability}).

The controllability theory for PDEs has been developed principally without taking into account eventual constraints associated to the phenomenon described by the model under analysis. Actually, to the authors' knowledge, the literature on constrained controllability is currently very limited and the majority of the available results do not guarantee that controlled trajectories fulfill the physical restrictions of the processes under consideration. 

In the context of the local heat equation, the problem of constrained controllability has been addressed in \cite{loheac2017minimal,pighin2018controllability} for the linear and semi-linear cases. In particular, in the mentioned references, the authors proved that, provided the control time is long enough, the linear and semi-linear local heat equations are controllable to any positive steady state or trajectory through the action of non-negative boundary controls. Moreover, for positive initial data, as a consequence of the maximum principle, the positivity of the state is preserved as well. On the other hand, these references, also show the failure of the constrained controllability if the time horizon is too short. 

In addition to the results for heat-like equations, constrained controllability properties have been also analyzed for other classes of parabolic models appearing in the context of population dynamics. In particular, in \cite{hegoburu2018controllability,maity2018controllability}, it has been shown that the controllability of Lotka-McKendrick type systems with age structuring can be obtained by preserving the positivity of the state, once again in a long enough time horizon. These results have been recently extended in \cite{maity2018control} to general infinite-dimensional systems with age structure.

The study of the controllability properties under positivity constraints is a very reasonable question for scalar-valued parabolic equations, which are canonical examples where the positivity is preserved for the free dynamics. Therefore, the issue of whether the system can be controlled in between two states by means of positive controls, by possibly preserving also the positivity of the controlled solution, arises naturally. 

We mention that the existence of a minimal time for constrained controllability may appear non-intuitive with respect to the unconstrained case, in which linear and semi-linear local parabolic systems are known to be controllable at any positive time. However, this is actually not surprising. Indeed, often times, norm-optimal controls allowing to reach the desired target are characterized by large oscillations in the proximity of the final time, which are enhanced when the time horizon of the control is small. This is due to the fact that those controls are restrictions of solutions of the adjoint system, and eventually leads to control trajectories that go beyond the physical thresholds and fail to fulfill the positivity constraint (see \cite{glowinski2008exact}). On the other hand, when the time interval is long, controls of small amplitude are allowed and we may expect the control property to be achieved through small deformations of the state and, in particular, preserving its positivity. 

For completeness, we remark that, in addition to the results for parabolic equations, similar questions for the linear wave equation have been analyzed in \cite{pighin2019controllability}. There, the authors obtained the controllability to steady states and trajectories through the action of a positive control, acting either in the interior or on the boundary of the considered domain. Nevertheless, in that case control and state positivity are not interlinked. Indeed, because of the lack of a maximum principle, the sign of the control does not determine the sign of the solution whose positivity is no longer guaranteed. 

The rest of the paper is organized as follows. In the first part of Section \ref{sec-2}  we fix some notations and state the main results of the paper. The first one (Theorem \ref{main3}) shows that under a positivity constraint on the control, the system \eqref{eq-main} is controllable to trajectories, and in addition, if the initial datum is non-negative, then the state is also non-negative.  The second main result, which is Theorem \ref{main4}, states that the minimal constrained controllability time is strictly positive. Finally our third main result (Theorem \ref{radon measure}) shows that, at the minimal controllability time, the constrained controllability to trajectories is achieved by controls which belong to a certain space of Radon measures. In the second part of Section \ref{sec-2} we recall some known results on fractional parabolic problems as they are needed throughout the article. In Section \ref{sec-null} we prove that there is a control function in $L^\infty(\mathcal O\times (0,T))$ (without any positivity constraint) such that the system \eqref{eq-main} is null controllable in any time $T>0$. Section \ref{prof-ma-re} is devoted to the proofs of our main results. In Section \ref{sec-num} we provide numerical examples that confirm our theoretical findings. Finally, Section \ref{sec-con-rem} is devoted to some final comments and open problems.

\section{Notations, main results and preliminaries}\label{sec-2}

In this section we give some notations, state our main results and recall some known results as they are needed throughout the paper.
We start by introducing the fractional order Sobolev spaces and by giving a rigorous definition of the fractional Laplace operator. 

\subsection{Fractional order Sobolev spaces and the fractional Laplace operator}
Let $\Omega\subset\RR$ be an arbitrary open set. We denote by $C_c(\overline{\Omega})$ the space of all continuous functions with compact support in $\overline{\Omega}$, and for $0<\gamma\le 1$, we let
\begin{align*}
C_c^{0,\gamma}(\overline{\Omega}):=\Big\{u\in C_c(\overline{\Omega}):\;\sup_{x,y\in \Omega,x\ne y}\frac{|u(x)-u(y)|}{|x-y|^{\gamma}}<\infty\Big\}.
\end{align*}
Given $0<s<1$ we define
\begin{align*}
H^{s}(\Omega):=\left\{u\in L^2(\Omega):\;\int_\Omega\int_\Omega\frac{|u(x)-u(y)|^2}{|x-y|^{1+2s}}\;dxdy<\infty\right\},
\end{align*}
and we endow it with the norm given by
\begin{align*}
\|u\|_{H^{s}(\Omega)}:=\left(\int_\Omega|u(x)|^2\;dx+\int_\Omega\int_\Omega\frac{|u(x)-u(y)|^2}{|x-y|^{1+2s}}\;dxdy\right)^{\frac 12}.
\end{align*}
We set
\begin{align*}
\widetilde H_0^{s}(\Omega):=\Big\{u\in H^{s}(\R):\;u=0\;\mbox{ in }\;\R\setminus \Omega\Big\}.
\end{align*}

It is well-known (see e.g. \cite{NPV}) that we have the following continuous embedding: if $\frac 12<s<1$, then
\begin{align} \label{sob-imb-2}
\widetilde H_0^{s}(\Omega)\hookrightarrow C_c^{0,s-\frac 12}(\bbOm).
\end{align}

We shall denote by $\widetilde H^{-s}(\Omega)$ the dual of $\widetilde H_0^{s}(\Omega)$ with respect to the pivot space $L^2(\Omega)$, that is, $\widetilde H^{-s}(\Omega)=(\widetilde H_0^{s}(\Omega))^\star$. In that case we have the following continuous embeddings: $\widetilde H_0^{s}(\Omega)\hookrightarrow L^2(\Omega)\hookrightarrow \widetilde H^{-s}(\Omega)$.
We shall let $\langle\cdot,\cdot\rangle_{-s,s}$ denote their duality pairing. We notice that in most of our results, the open set $\Omega$ will be the bounded open interval $(-1,1)$ or the control region $\mathcal O$.

For more information on fractional order Sobolev spaces, we refer to \cite{NPV,Gris,War} and their references.

Next, we give a rigorous definition of the fractional Laplace operator. Let 
\begin{align*}
\mathcal L_s^{1}(\R):=\left\{u:\R\to\R\;\mbox{ measurable and }\; \int_{\R}\frac{|u(x)|}{(1+|x|)^{1+2s}}\;dx<\infty\right\}.
\end{align*}
For $u\in \mathcal L_s^{1}(\R)$ and $\varepsilon>0$ we set
\begin{align*}
(-\partial_x^2)_\varepsilon^s u(x):= C_{s}\int_{\{y\in\R:\;|x-y|>\varepsilon\}}\frac{u(x)-u(y)}{|x-y|^{1+2s}}\;dy,\;\;x\in\R,
\end{align*}
where $C_{s}$ is a normalization constant given by
\begin{align}\label{CNs}
C_{s}:=\frac{s2^{2s}\Gamma\left(\frac{2s+1}{2}\right)}{\pi^{\frac
12}\Gamma(1-s)}.
\end{align}
The {\em fractional Laplacian}  $(-\partial_x^2)^s$ is defined by the following singular integral:
\begin{align}\label{fl_def}
(-\partial_x^2)^su(x):=C_{s}\,\mbox{P.V.}\int_{\R}\frac{u(x)-u(y)}{|x-y|^{1+2s}}\;dy=\lim_{\varepsilon\downarrow 0}(-\partial_x^2)_\varepsilon^s u(x),\;\;x\in\R,
\end{align}
provided that the limit exists for a.e. $x\in\RR$. We notice that $\mathcal L_s^{1}(\R)$ is the right space for which $ v:=(-\partial_x^2)_\varepsilon^s u$ exists for every $\varepsilon>0$, $v$ being also continuous at the continuity points of  $u$.  For more details on the fractional Laplace operator we refer to \cite{Caf3,NPV,GW-CPDE,War} and their references.

Next, we consider the realization of $(-\partial_x^2)^s$ in $L^2(-1,1)$ with the exterior zero Dirichlet condition. More precisely, we consider the closed and bilinear form $\mathcal F: \widetilde H_0^{s}(\Om)\times  \widetilde H_0^{s}(\Om)\to \RR$ given by
\begin{align}\label{closed}
\mathcal F(u,v):=\frac{C_{s}}{2}\int_{\R}\int_{\R}\frac{(u(x)-u(y))(v(x)-v(y))}{|x-y|^{1+2s}}\;dxdy,\;\;u,v\in \widetilde H_0^{s}(\Om).
\end{align}

Let $(-\partial_x^2)_D^s$ be the self-adjoint operator on $L^2(-1,1)$ associated with $\mathcal F$ in the sense that
\begin{equation*}
\begin{cases}
D((-\partial_x^2)_D^s):=\Big\{u\in \widetilde H_0^{s}(\Om):\;\exists\;f\in L^2(-1,1)\mbox{ and }\;\mathcal F(u,v)=(f,v)_{L^2(-1,1)}\\
\qquad\qquad\qquad\quad\;\forall\;v\in \widetilde H_0^{s}(\Om)\Big\},\\
(-\partial_x^2)_D^su:=f.
\end{cases}
\end{equation*}
We have that  (see e.g. \cite{BC-MW,SV2})
\begin{equation}\label{DLO}
\begin{cases}
D((-\partial_x^2)_D^s):=\left\{u\in\widetilde H_0^{s}(\Om):\; (-\partial_x^2)^su\in L^2(-1,1)\right\},\\
(-\partial_x^2)_D^su:=((-\partial_x^2)^su)|_{(-1,1)}.
\end{cases}
\end{equation}

Then, $(-\partial_x^2)_D^s$ is the realization of $(-\partial_x^2)^s$ in $L^2(-1,1)$ with the condition $u=0$ in $\Omc$. By \cite{SV2}, $(-\partial_x^2)_D^s$ has a compact resolvent and its eigenvalues form a non-decreasing sequence of real numbers $0<\lambda_1\le\lambda_2\le\cdots\le\lambda_n\le\cdots$ satisfying $\lim_{n\to\infty}\lambda_n=\infty$.  In addition, the eigenvalues are of finite multiplicity and are simple if $\frac 12\le s<1$.
Let $(\varphi_n)_{n\in\NN}$ be the orthonormal basis of eigenfunctions associated with $(\lambda_n)_{n\in\NN}$. Then, $\varphi_n\in D((-\partial_x^2)_D^s)$ for every $n\in\NN$,  $(\varphi_n)_{n\in\NN}$ is total in $L^2(-1,1)$ and satisfies 
\begin{equation}\label{ei-val-pro}
\begin{cases}
(-\partial_x^2)^s\varphi_n=\lambda_n\varphi_n\;\;&\mbox{ in }\;(-1,1),\\
\varphi_n=0\;&\mbox{ in }\;\Omc.
\end{cases}
\end{equation}
Next, for $u\in H^{s}(\R)$ we introduce the {\em nonlocal normal derivative $\mathcal N_s$} given by 
\begin{align}\label{NLND}
\mathcal N_su(x):=C_{s}\int_{-1}^1\frac{u(x)-u(y)}{|x-y|^{1+2s}}\;dy,\;\;\;x\in(\R \setminus[-1,1]),
\end{align}
where $C_{s}$ is the constant given in \eqref{CNs}.
Since equality is to be understood a.e., we have that \eqref{NLND} is the same as for a.e. $x\in\Omc$.

The following unique continuation property, which shall play an important role in the proof of our main results, has been recently obtained in \cite[Theorem 16]{War-ACE}.

\begin{lemma}\label{lem-UCD}
Let $\lambda>0$ be a real number  and $\mathcal O\subset(\Omc)$ an arbitrary  nonempty open set. 
If $\varphi\in D((-\partial_x^2)_D^s)$ satisfies
\begin{equation*}
(-\partial_x^2)_D^s\varphi=\lambda\varphi\;\mbox{ in }\;(-1,1)\;\mbox{ and }\; \mathcal N_s\varphi=0\;\mbox{ in }\;\mathcal O,
\end{equation*}
then $\varphi=0$ in $\R$. 
\end{lemma}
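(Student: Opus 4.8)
The plan is to exploit the Caffarelli–Silvestre type extension characterization of the fractional Laplacian together with a classical unique continuation argument for a degenerate elliptic equation in the half-space. First I would pass from the eigenvalue problem on the line to a local problem: let $U(x,y)$ be the $2s$-harmonic extension of $\varphi$ to the upper half-plane $\R\times(0,\infty)$, so that $U$ solves the degenerate equation $\Div(y^{1-2s}\nabla U)=0$ in $\R\times(0,\infty)$ with $U(\cdot,0)=\varphi$ and $-d_s\lim_{y\to 0^+} y^{1-2s}\partial_y U(\cdot,0) = (-\partial_x^2)^s\varphi$ as a distribution on $\R$. The hypotheses then translate into: $U=0$ on $\Omc\times\{0\}$ (the exterior Dirichlet condition), $-d_s\lim_{y\to0^+}y^{1-2s}\partial_y U = \lambda\varphi$ on $(-1,1)\times\{0\}$ (the eigenvalue equation), and, crucially, $\mathcal N_s\varphi = 0$ on $\mathcal O$ which, by the characterization of the nonlocal normal derivative in terms of the extension (see the references to \cite{War-ACE}), forces $\lim_{y\to0^+}y^{1-2s}\partial_y U = 0$ on $\mathcal O\times\{0\}$ as well.

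Next I would observe that on the open set $\mathcal O\times\{0\}$ both the Dirichlet trace $U$ and the (weighted) Neumann trace $y^{1-2s}\partial_y U$ vanish. One then reflects $U$ evenly (or suitably) across $\mathcal O\times\{0\}$ to obtain a function that solves the degenerate elliptic equation $\Div(y^{1-2s}\nabla \widetilde U)=0$ across $\mathcal O$, and whose Cauchy data vanish on $\mathcal O\times\{0\}$; alternatively, one directly applies the strong unique continuation / Cauchy uniqueness results available for equations with Muckenhoupt $A_2$ weights (this is precisely the machinery developed for fractional-Laplacian unique continuation, e.g. Fall–Felli, Rüland, and used in \cite{War-ACE}). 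From the vanishing Cauchy data on $\mathcal O\times\{0\}$ one concludes $\widetilde U\equiv 0$ in a neighborhood, and then $U\equiv 0$ in the connected component of $\R\times(0,\infty)$, hence $\varphi = U(\cdot,0)\equiv 0$ on $\R$.

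The main obstacle — and the reason this is quoted as a known hard theorem rather than proved from scratch — is the unique continuation step for the degenerate weighted operator $\Div(y^{1-2s}\nabla\,\cdot\,)$: the weight $y^{1-2s}$ vanishes or blows up on the boundary hyperplane $\{y=0\}$, so one cannot directly invoke classical Aronszajn–Cordes unique continuation. One must instead use Almgren-type monotonicity formulas adapted to the $A_2$-weight, or Carleman estimates with weights tailored to the degeneracy at $y=0$, to push the vanishing of the Cauchy data on $\mathcal O\times\{0\}$ into the interior. Since this is exactly \cite[Theorem 16]{War-ACE}, in our exposition I would simply invoke that result; the contribution here is the translation of the abstract eigenvalue-plus-nonlocal-normal-derivative hypotheses into the extension-problem language to which that theorem applies. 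In particular, one should be careful that $\mathcal O$ is only assumed open and nonempty (not connected and not necessarily an interval), but since unique continuation propagates through any connected open set and $\R\times(0,\infty)$ is connected, working on a single small ball inside $\mathcal O\times(0,\infty)$ suffices to kill $U$ globally.
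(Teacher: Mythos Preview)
The paper does not prove this lemma at all: it is quoted verbatim as \cite[Theorem~16]{War-ACE} and used as a black box. Your sketch is a correct outline of the argument behind that cited result, and indeed you acknowledge as much in your final paragraph; so there is no disagreement in approach, only in level of detail.

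One small point worth tightening: the step where you pass from $\mathcal N_s\varphi=0$ on $\mathcal O$ to the vanishing of the weighted Neumann trace of the extension on $\mathcal O\times\{0\}$ is not a direct ``characterization of $\mathcal N_s$ via the extension.'' The Caffarelli--Silvestre extension identifies $-d_s\lim_{y\to 0^+}y^{1-2s}\partial_y U$ with $(-\partial_x^2)^s\varphi$ on all of $\R$, not with $\mathcal N_s\varphi$. The missing (easy) observation is that since $\varphi\in D((-\partial_x^2)_D^s)\subset\widetilde H_0^s(-1,1)$ vanishes on $\Omc$, one has pointwise on $\mathcal O$
\[
(-\partial_x^2)^s\varphi(x)=C_s\,\mbox{P.V.}\int_\R\frac{-\varphi(y)}{|x-y|^{1+2s}}\,dy=C_s\int_{-1}^1\frac{-\varphi(y)}{|x-y|^{1+2s}}\,dy=\mathcal N_s\varphi(x),
\]
so $\mathcal N_s\varphi=0$ on $\mathcal O$ gives $(-\partial_x^2)^s\varphi=0$ there, and only then does the extension argument yield vanishing Cauchy data on $\mathcal O\times\{0\}$. (This identity is exactly \eqref{eq-eg} in the paper.) With that inserted, your route---extension, vanishing Cauchy data on $\mathcal O\times\{0\}$, then $A_2$-weighted unique continuation (Fall--Felli / R\"uland)---is the standard and correct one. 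Note also that the eigenvalue equation and the positivity of $\lambda$ play no role beyond ensuring $\varphi\in D((-\partial_x^2)_D^s)$; the unique continuation step only uses $\varphi=(-\partial_x^2)^s\varphi=0$ on $\mathcal O$.
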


For more details on the Dirichlet problem associated with the fractional Laplace operator we refer the interested reader to \cite{BWZ1,Grub,RS2-2,RS-DP,War-ACE} and their references.

We conclude this section with the following integration by parts formula.

\begin{lemma}
Let $u\in \widetilde H_0^{s}(\Om)$ be such that $(-\partial_x^2)^su\in L^2(-1,1)$ and $\mathcal N_su\in L^2(\Omc)$. Then, the identity
\begin{multline}\label{Int-Part}
\frac{C_{s}}{2}\int_{\RR}\int_{\RR}\frac{(u(x)-u(y))(v(x)-v(y))}{|x-y|^{1+2s}}\;dxdy=\int_{-1}^1v(x)(-\partial_x^2)^su(x)\;dx\\+\int_{\Omc}v(x)\mathcal N_su(x)\;dx,
\end{multline}
holds for every $v\in H^s(\RR)$.
\end{lemma}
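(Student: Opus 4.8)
The plan is to derive the integration-by-parts formula \eqref{Int-Part} by a density and truncation argument, reducing to the elementary pointwise identity valid for smooth compactly supported functions and then passing to the limit. First I would establish the identity for $u\in C_c^\infty(\RR)$ and $v\in C_c^\infty(\RR)$. In that case all the integrals below converge absolutely and one may simply split the double integral over $\RR\times\RR$ into the regions $(-1,1)\times(-1,1)$, $(-1,1)\times\Omc$, $\Omc\times(-1,1)$, and $\Omc\times\Omc$. Using the symmetry of the kernel $|x-y|^{-1-2s}$ and of the integrand under the interchange $x\leftrightarrow y$, the contribution from $\Omc\times\Omc$ cancels the corresponding terms (more precisely, one rewrites the left-hand side and groups the four blocks so that the $\Omc\times\Omc$ block is multiplied by $v(x)-v(y)$ factors that pair against the definition of $(-\partial_x^2)^s u$ on $(-1,1)$ and $\mathcal N_s u$ on $\Omc$). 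Writing $u(x)-u(y)=[u(x)-u(y)]$, one recognizes that
\begin{align*}
C_s\int_{\RR}\frac{u(x)-u(y)}{|x-y|^{1+2s}}\,dy=(-\partial_x^2)^su(x)\quad\text{for }x\in(-1,1),
\end{align*}
because $u=0$ on $\Omc$ is not assumed yet in the smooth step, so more carefully: for general smooth $u$ the left side equals $(-\partial_x^2)^su(x)$ by definition \eqref{fl_def}, and for $x\in\Omc$ the split of this same integral into $\int_{-1}^1+\int_{\Omc}$ gives $\mathcal N_su(x)$ plus a term that by antisymmetry integrates to zero against $v$ over $\Omc\times\Omc$. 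Collecting terms yields \eqref{Int-Part} for smooth $u,v$; this is essentially a Fubini rearrangement and I would not belabor it.

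Next I would remove the smoothness on $u$. Fix $u\in\widetilde H_0^s(-1,1)$ with $(-\partial_x^2)^su\in L^2(-1,1)$ and $\mathcal N_su\in L^2(\Omc)$. Since $u\in\widetilde H_0^s(-1,1)$, there is a sequence $u_k\in C_c^\infty(-1,1)$ with $u_k\to u$ in $H^s(\RR)$; in particular the bilinear form $\mathcal F(u_k,v)\to\mathcal F(u,v)$ for every $v\in H^s(\RR)$ by continuity of $\mathcal F$ on $\widetilde H_0^s(-1,1)\times H^s(\RR)$ (note $\mathcal F$ extends to that product because only one argument need vanish outside $(-1,1)$ for the Gagliardo seminorm to be finite, the other lying in $H^s(\RR)$). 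For the right-hand side one uses the characterization \eqref{DLO}: since $u\in D((-\partial_x^2)_D^s)$, the operator $(-\partial_x^2)_D^s$ and the form $\mathcal F$ are already related by $\mathcal F(u,v)=((-\partial_x^2)_D^su,v)_{L^2(-1,1)}$ for $v\in\widetilde H_0^s(-1,1)$, and the extra boundary term $\int_{\Omc}v\,\mathcal N_su\,dx$ accounts exactly for the discrepancy when $v$ does not vanish outside $(-1,1)$. So the cleanest route is: (a) prove \eqref{Int-Part} for smooth $u,v$; (b) for general $u$ as in the hypothesis and general $v\in H^s(\RR)$, approximate $v$ by $v_j\in C_c^\infty(\RR)$ in $H^s(\RR)$ — the left side converges by continuity of $\mathcal F$, the first right-hand term converges since $(-\partial_x^2)^su\in L^2(-1,1)$ and $v_j\to v$ in $L^2(-1,1)$, and the second converges since $\mathcal N_su\in L^2(\Omc)$ and $v_j\to v$ in $L^2(\Omc)$ (using $H^s(\RR)\hookrightarrow L^2_{\mathrm{loc}}$ and, for the unbounded part of $\Omc$, a cutoff plus the fact that $\mathcal N_su$ decays like $|x|^{-1-2s}$ so it lies in $L^1(\Omc)$ as well, letting us pair it with the uniformly bounded tails); (c) finally remove smoothness of $u$ by the $u_k\to u$ approximation, checking that $(-\partial_x^2)^su_k\to(-\partial_x^2)^su$ and $\mathcal N_su_k\to\mathcal N_su$ in the relevant topologies, or more simply invoke that both sides are continuous in $u$ with respect to the graph norm of $(-\partial_x^2)_D^s$ together with the $L^2(\Omc)$-norm of the nonlocal normal derivative.

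The main obstacle I anticipate is the convergence of the boundary integral $\int_{\Omc}v\,\mathcal N_su\,dx$ in the approximation steps, because $\Omc=\RR\setminus(-1,1)$ is unbounded and $v\in H^s(\RR)$ is only $L^2$ globally, so one cannot simply bound $\|v-v_j\|_{L^2(\Omc)}\|\mathcal N_su\|_{L^2(\Omc)}$ without already knowing $\mathcal N_su\in L^2(\Omc)$ — which is in the hypothesis, so this is fine — but one must still be careful that the smooth-function identity in step (a) holds with all integrals absolutely convergent, and that the rearrangement of the (a priori only principal-value) singular integral on the diagonal is legitimate. I would handle the diagonal singularity by first proving the identity with the truncated operators $(-\partial_x^2)_\varepsilon^s$ (for which everything is an absolutely convergent double integral and Fubini applies with no P.V.\ subtlety), and then letting $\varepsilon\downarrow0$, using the $\mathcal L_s^1(\RR)$-framework recalled after \eqref{fl_def} to guarantee convergence $(-\partial_x^2)_\varepsilon^su\to(-\partial_x^2)^su$ and, for the nonlocal normal derivative, noting that its integrand is not singular at all since $x\in\Omc$ and $y\in(-1,1)$ stay a positive distance apart, so no truncation is needed there. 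A secondary technical point is justifying that $\mathcal F$ extends continuously to $\widetilde H_0^s(-1,1)\times H^s(\RR)$, which follows from Cauchy–Schwarz on the kernel after observing that if $u$ vanishes outside $(-1,1)$ then the Gagliardo double integral of $u$ over $\RR\times\RR$ is finite (it equals $\|u\|_{\widetilde H_0^s}^2$ up to constants), so the mixed term is controlled by the product of the two seminorms. Once these points are in place the identity follows by the indicated three-step limiting procedure.
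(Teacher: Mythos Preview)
The paper does not prove this lemma itself; it simply cites \cite[Lemma~3.3]{DRV}. Your overall strategy---establish the identity for smooth functions via symmetrization/Fubini and then pass to the limit by density---is the standard one and matches the spirit of the cited reference.

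That said, your plan has a structural issue and one genuine gap. The structural issue is the ordering of your steps: step (b) (general $u$, approximate $v$) presupposes the identity for general $u$ and \emph{smooth} $v$, which you only obtain after step (c); logically the order should be (a)$\to$(c)$\to$(b). The genuine gap is in step (c): convergence $u_k\to u$ in $\widetilde H_0^s(-1,1)$ does \emph{not} imply $(-\partial_x^2)^s u_k\to(-\partial_x^2)^s u$ in $L^2(-1,1)$ or $\mathcal N_su_k\to\mathcal N_su$ in $L^2(\Omc)$---those are higher-order quantities controlled only by a stronger norm. Your fallback (density of smooth functions in the graph norm of $(-\partial_x^2)_D^s$) is itself a nontrivial fact that you would have to supply.

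The cleanest fix is to avoid approximating $u$ altogether. Under the hypotheses, $u\in\widetilde H_0^s(-1,1)\subset H^s(\RR)$ vanishes on $\Omc$, so for a.e.\ $x\in\Omc$ the same computation as in \eqref{eq-eg} gives $(-\partial_x^2)^su(x)=\mathcal N_su(x)$. Combined with $(-\partial_x^2)^su\in L^2(-1,1)$ and $\mathcal N_su\in L^2(\Omc)$, this yields $(-\partial_x^2)^su\in L^2(\RR)$, i.e.\ $u\in H^{2s}(\RR)$. Then, by the Fourier characterization of the Gagliardo form, for every $v\in H^s(\RR)$ one has
\[
\frac{C_s}{2}\int_\RR\int_\RR\frac{(u(x)-u(y))(v(x)-v(y))}{|x-y|^{1+2s}}\,dxdy
=\int_\RR|\xi|^{2s}\widehat u(\xi)\,\overline{\widehat v(\xi)}\,d\xi
=\int_\RR v\,(-\partial_x^2)^su\,dx,
\]
and splitting the last integral over $(-1,1)$ and $\Omc$ gives \eqref{Int-Part} directly---no approximation in $u$ or $v$ is needed, and the tail/P.V.\ concerns you flagged do not arise.
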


We refer to \cite[Lemma 3.3]{DRV} (see also \cite{War-ACE,WaZa}) for the proof and for more details.

\subsection{Main results}\label{main-results}
In this section we state the main results of the paper. We start with our controllability to trajectories result of the system \eqref{eq-main} with $L^\infty$-controls and positivity constraints.

\begin{theorem}\label{main3}
Let $\mathcal O\subset (\Omc)$ be an arbitrary nonempty bounded open set.
Let $\frac 12<s<1$ and consider a positive trajectory $\widehat{u}$ of \eqref{eq-main} with initial datum $0<\widehat{u}_0\in L^2(-1,1)$ and exterior control datum $\widehat{g}\in L^{\infty}(\mathcal{O}\times(0,T))\cap L^2((0,T);\widetilde H_0^s(\mathcal{O}))$ for which there is a positive constant $\alpha$ such that $\widehat{g}\geq\alpha$ a.e. in $\mathcal O\times (0,T)$. Then, there exist $T>0$ and a non-negative control $g\in L^{\infty}(\mathcal{O}\times(0,T))\cap L^2((0,T);\widetilde H_0^s(\mathcal{O}))$ such that the corresponding weak solution $u$ of \eqref{eq-main} satisfies $u(\cdot,T)=\widehat{u}(\cdot,T)$ a.e. in $(-1,1)$. In addition, if $u_0\geq 0$ a.e. in $(-1,1)$, then $u\geq 0$ a.e. in $ (-1,1)\times (0,T)$.
\end{theorem}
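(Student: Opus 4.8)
The plan is to reduce constrained controllability to trajectories to a combination of (a) unconstrained null controllability in arbitrarily small time with $L^\infty$-controls (Theorem \ref{main1}), and (b) a large-time ``staying-positive'' argument exploiting the smoothing and stabilizing effect of the fractional heat semigroup together with the positivity of the target trajectory. Precisely, write $w:=u-\widehat u$; then $w$ solves the same fractional heat equation with exterior datum $h:=g-\widehat g$ on $\mathcal O$ and initial datum $w_0:=u_0-\widehat u_0\in L^2(-1,1)$, and our goal $u(\cdot,T)=\widehat u(\cdot,T)$ becomes $w(\cdot,T)=0$, i.e. null controllability for $w$, subject to the constraint $g=h+\widehat g\ge 0$, i.e. $h\ge -\widehat g$, which (since $\widehat g\ge\alpha>0$ a.e.) is implied by $\|h\|_{L^\infty(\mathcal O\times(0,T))}\le\alpha$.

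First I would choose a ``waiting time'' $T_0>0$: let the system evolve freely ($g=\widehat g$ on $[0,T_0]$, so $h\equiv 0$), and observe that $w(\cdot,T_0)=e^{-T_0(-\partial_x^2)_D^s}w_0$ decays exponentially, $\|w(\cdot,T_0)\|_{L^2(-1,1)}\le e^{-\lambda_1 T_0}\|w_0\|_{L^2(-1,1)}$. On $[T_0,T_0+\tau]$ I would then invoke Theorem \ref{main1} to steer $w(\cdot,T_0)$ to $0$ in the (arbitrary, in particular small) time $\tau$ with an $L^\infty$-control; the key point is to track the cost: by the usual norm-optimal/HUM construction the controllability cost in time $\tau$ starting from $L^2$-data of size $\delta$ is bounded by $C(\tau)\,\delta$, where $C(\tau)$ is the observability constant. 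Hence the control $h$ on this interval satisfies $\|h\|_{L^\infty(\mathcal O\times(T_0,T_0+\tau))}\le C(\tau)e^{-\lambda_1 T_0}\|w_0\|_{L^2(-1,1)}$. Fixing any convenient $\tau$ (say $\tau=1$) and then taking $T_0$ large enough that $C(1)e^{-\lambda_1 T_0}\|w_0\|_{L^2(-1,1)}\le\alpha$, the constraint $g\ge 0$ is met, and $T:=T_0+1$ works. The last sentence — state positivity when $u_0\ge 0$ — then follows at once: $g\ge 0$ in $\mathcal O\times(0,T)$, $g=0$ on $(\Omc)\setminus\mathcal O$, and $u_0\ge 0$, so the (weak) maximum principle for the exterior Dirichlet problem for the fractional heat equation (as recorded for this setting, e.g. via \cite{WaZa1}, or directly from the Beurling–Deny / positivity-preserving property of $e^{-t(-\partial_x^2)_D^s}$ and Duhamel with a nonnegative exterior source) yields $u\ge 0$ a.e. in $(-1,1)\times(0,T)$.

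A few points require care. One must make sure the control $h$ produced by Theorem \ref{main1} also lies in $L^2((0,T);\widetilde H_0^s(\mathcal O))$, not merely in $L^\infty(\mathcal O\times(0,T))$; this can be arranged by choosing the control to be, e.g., a smooth-in-space cutoff times the HUM control, or by noting that the construction in the proof of Theorem \ref{main1} already delivers controls with the requisite spatial regularity, so that $g=h+\widehat g$ inherits membership in both spaces from $\widehat g$ and $h$. One must also check that gluing the free evolution on $[0,T_0]$ with the controlled evolution on $[T_0,T]$ produces a bona fide weak solution on $[0,T]$ with the stated regularity — this is routine since the state at time $T_0$ is a legitimate $L^2$ initial datum and the two control pieces match ($h=0$ at $t=T_0^-$). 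Finally, since $\widehat u$ is itself a trajectory of \eqref{eq-main} with the stated data, $u=w+\widehat u$ is the weak solution with initial datum $u_0$ and exterior datum $g=h+\widehat g$, closing the argument.

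The main obstacle is the \emph{quantitative cost estimate}: Theorem \ref{main1} as stated gives null controllability in any $T>0$, but to meet the positivity constraint we need an explicit bound on the $L^\infty$-norm of the control in terms of the $L^2$-norm of the data and (crucially) its dependence on the time horizon, so that the exponential decay gained during the waiting phase actually dominates the blow-up of the control cost as $\tau\downarrow 0$. This forces us to either extract such a cost bound from the proof of Theorem \ref{main1} (which proceeds via an $L^1$-observability inequality for the nonlocal normal derivative of adjoint solutions, so the relevant observability constant must be made quantitative, or at least shown finite for a fixed $\tau$ and then held fixed), or to restructure the argument so that $\tau$ is fixed once and for all and only $T_0$ is sent to infinity — the latter being the cleaner route and the one I would pursue, since for fixed $\tau$ the cost $C(\tau)<\infty$ is all that is needed.
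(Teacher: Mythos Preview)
Your proposal is correct and follows essentially the same strategy as the paper: reduce to null controllability of $w=u-\widehat u$ with the constraint $\|h\|_{L^\infty}\le\alpha$, then exploit the exponential decay of the free evolution against a fixed (finite) observability/controllability cost so that for large enough total time the control stays below $\alpha$; state positivity then follows from the comparison principle (Theorem~\ref{theo-28}). The only cosmetic difference is that the paper encodes the ``waiting phase'' directly inside the observability inequality (shifting the observation window to $(t_0,T)$, using $\|\psi(\cdot,0)\|\le e^{-\lambda_1 t_0}\|\psi(\cdot,t_0)\|$, and taking $t_0=T/2$), whereas you split the time interval explicitly into a free-evolution phase $[0,T_0]$ and a control phase $[T_0,T_0+\tau]$ with $\tau$ fixed; both routes rely on exactly the same ingredients (Theorem~\ref{main1}/\ref{main2} and the boundedness of the observability constant away from $T=0$).
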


\begin{remark}
{\em We notice that the assumption that the control region $\mathcal O$ must be bounded, is necessary to ensure that the control  $\widehat{g}\in L^{\infty}(\mathcal{O}\times(0,T))$ satisfying $\widehat{g}\geq\alpha$ a.e. in $\mathcal O\times (0,T)$ also belongs to $L^2((0,T);\widetilde H_0^s(\mathcal{O}))$.}
\end{remark}

Our second main result, which is the following theorem, shows that the minimal controllability time is strictly positive.

\begin{theorem}\label{main4}
Under the hypothesis of Theorem \ref{main3}, let $T_{\min}$ be the minimal controllability time given by
\begin{multline}\label{minimaltime}
T_{\min}:=\inf\Big\{T>0\; :\; \exists\; 0\leq g\in L^{\infty}(\mathcal{O}\times(0,T))\cap L^2((0,T);\widetilde H_0^s(\mathcal{O}))\\ \text{ such that }u(\cdot,T)=\widehat{u}(\cdot,T)\Big\}.
\end{multline}
Then, $T_{\min}>0$.
\end{theorem}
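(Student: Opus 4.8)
The plan is to prove that $T_{\min}>0$ by contradiction, exploiting the fact that the free fractional heat dynamics is a strongly smoothing contraction semigroup while the positive target trajectory $\widehat u$ stays uniformly away from certain values at time $t=0$. Suppose $T_{\min}=0$; then there is a sequence $T_n\downarrow 0$ and non-negative controls $g_n\in L^\infty(\mathcal O\times(0,T_n))\cap L^2((0,T_n);\widetilde H_0^s(\mathcal O))$ driving $u_0$ to $\widehat u(\cdot,T_n)$ at time $T_n$. The first step is to write the solution via Duhamel's formula: with $(S(t))_{t\ge0}$ the semigroup generated by $-(-\partial_x^2)_D^s$ and $P$ an appropriate (solution) operator encoding the nonhomogeneous exterior data, one has
\begin{align*}
u(\cdot,T_n)=S(T_n)u_0+\int_0^{T_n}S(T_n-\tau)\,\mathcal G g_n(\tau)\,d\tau,
\end{align*}
where $\mathcal G g_n(\tau)=-(-\partial_x^2)^s(g_n(\tau)\chi_{\mathcal O})\big|_{(-1,1)}$ is the forcing term coming from lifting the exterior datum. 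Since $g_n\ge0$ and, for $x\in(-1,1)$ and $y\in\mathcal O\subset\Omc$, the kernel $|x-y|^{-1-2s}$ is positive, the term $-(-\partial_x^2)^s(g_n(\tau)\chi_{\mathcal O})(x)=C_s\int_{\mathcal O}\frac{g_n(y,\tau)}{|x-y|^{1+2s}}\,dy\ge0$ for a.e. $x\in(-1,1)$. Because $S(t)$ is positivity preserving, the Duhamel integral is a non-negative function on $(-1,1)$ for each $n$.

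The second step is to extract a sign/inequality at $t=0$ that cannot be matched. Testing the Duhamel identity against a fixed non-negative $\psi\in L^2(-1,1)$, $\psi\not\equiv0$, and using that $\langle S(T_n)u_0,\psi\rangle\to\langle u_0,\psi\rangle$ and $\langle\widehat u(\cdot,T_n),\psi\rangle\to\langle\widehat u_0,\psi\rangle=\langle\widehat u(\cdot,0),\psi\rangle$ as $n\to\infty$ (strong continuity of $S$ at $0$ and continuity of the trajectory $\widehat u$ in $C([0,T];L^2(-1,1))$), we get in the limit
\begin{align*}
\langle\widehat u_0,\psi\rangle-\langle u_0,\psi\rangle=\lim_{n\to\infty}\Big\langle\int_0^{T_n}S(T_n-\tau)\,\mathcal G g_n(\tau)\,d\tau,\ \psi\Big\rangle\ \ge\ 0,
\end{align*}
hence $\widehat u_0\ge u_0$ would have to hold in the weak sense, i.e. $\langle\widehat u_0-u_0,\psi\rangle\ge0$ for all $0\le\psi\in L^2(-1,1)$, forcing $\widehat u_0\ge u_0$ a.e. in $(-1,1)$. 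Since $u_0\in L^2(-1,1)$ is an \emph{arbitrary} initial datum (in particular one may choose $u_0$ not below $\widehat u_0$, e.g. $u_0=\widehat u_0+1$, or more carefully any $u_0$ with $u_0\not\le\widehat u_0$ on a set of positive measure), this is a contradiction. Therefore $T_{\min}>0$.

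The main obstacle, and the part requiring care, is making the passage to the limit in the Duhamel integral rigorous: a priori the controls $g_n$ could blow up in norm as $T_n\to0$, so one cannot simply bound the integral. The resolution is that positivity makes the integrand a non-negative element of $L^2(-1,1)$ for each $n$, so testing against $0\le\psi$ gives a \emph{non-negative} quantity regardless of the size of $g_n$; no uniform bound is needed, only the sign. One should double-check the functional-analytic setting: that $\mathcal G g_n(\tau)$ lands in a space in which $S(T_n-\tau)$ acts and the Duhamel formula is the one satisfied by the weak solution of \eqref{eq-main} (this uses $g_n\in L^2((0,T_n);\widetilde H_0^s(\mathcal O))$ together with the integration-by-parts formula \eqref{Int-Part} to identify the weak formulation), and that $S(t)$ is indeed positivity preserving — this follows since $(-\partial_x^2)_D^s$ is associated with the Dirichlet form $\mathcal F$ in \eqref{closed}, which is a regular Dirichlet form, so the semigroup is sub-Markovian and in particular order preserving. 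With these standard facts in place the contradiction argument is clean, and the strict positivity $T_{\min}>0$ follows.
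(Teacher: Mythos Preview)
Your Duhamel/positivity argument is correct as far as it goes, but it does not prove the theorem as stated: it only yields $T_{\min}>0$ when $u_0\not\le\widehat u_0$. The sentence ``since $u_0\in L^2(-1,1)$ is an arbitrary initial datum, one may choose $u_0$ not below $\widehat u_0$'' is where the gap lies: in the theorem $u_0$ is a \emph{fixed} datum and $T_{\min}=T_{\min}(u_0,\widehat u)$ is the minimal time for that specific pair; you are not free to pick $u_0$ after the fact. What your argument actually establishes is the implication $T_{\min}=0\Rightarrow\widehat u_0\ge u_0$ a.e., which produces no contradiction whenever $u_0\le\widehat u_0$ (for instance $u_0=0$ and $\widehat u_0>0$, exactly the situation of Case~1 in the numerical section). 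The one-sided sign $\mathcal Gg_n\ge0$ cannot by itself exclude that a large non-negative impulse lands exactly on a target strictly above $u_0$ in arbitrarily short time.

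The paper obtains the missing reverse inequality by testing not against a non-negative $\psi$ but against each eigenfunction $\varphi_n$, and then splitting $\mathcal N_s\varphi_n=[\mathcal N_s\varphi_n]^+-[\mathcal N_s\varphi_n]^-$ on $\mathcal O$. Since $g\ge0$, both pairings $(g,[\mathcal N_s\varphi_n]^\pm)_{L^2(\mathcal O)}$ are non-negative, and sandwiching the Fourier identity with $e^{-\lambda_nT}\le e^{-\lambda_n(T-\tau)}\le1$ produces, in the limit $T\downarrow0$, matching upper and lower constraints that force $(\widehat u_0-u_0,\varphi_n)_{L^2(-1,1)}=0$ for every $n$, hence $\widehat u_0=u_0$. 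To salvage your route you would need some complementary upper bound on the controlled contribution (not merely $\ge0$); testing only against non-negative $\psi$ cannot supply one, so a genuinely different ingredient is required in the regime $u_0\le\widehat u_0$.
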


Next, let $\mathcal{M}(\mathcal{O}\times(0,T))$ be the space of Radon measures on $\mathcal O\times (0,T)$. Then  $\mathcal{M}(\mathcal{O}\times(0,T))$ endowed with the norm
\begin{multline*}
\|\mu\|_{\mathcal{M}(\mathcal{O}\times(0,T))}:=\sup\Bigg\{\int_{\mathcal{O}\times(0,T)} \xi(x,t) d\mu(x,t)\; : \; \xi\in C(\overline{\mathcal{O}}\times[0,T],\RR),\\\; \max_{\overline{\mathcal{O}}\times[0,T]}|\xi|=1  \Bigg\},
\end{multline*}
is a Banach space. Here we assume that the control region $\mathcal O$ is bounded.

Our last main result shows that at the  minimal controllability time $T_{\rm min}$, the null-controllability of the system \eqref{eq-main} is achieved with controls in $\mathcal{M}(\mathcal{O}\times(0,T))$.

\begin{theorem}\label{radon measure}
Let the hypothesis of Theorem \ref{main3} hold and assume in addition that $\mathcal O\subset\overline{\mathcal O}\subset(\R\setminus[-1,1])$. Let $T:=T_{\min}$ be the minimal controllability time given by \eqref{minimaltime}. Then, there exists a non--negative control $g\in \mathcal{M}(\mathcal{O}\times(0,T))$ such that the corresponding solution $u$ of \eqref{eq-main} satisfies $u(\cdot,T)=\widehat{u}(\cdot,T)$ a.e. in $(-1,1)$.
\end{theorem}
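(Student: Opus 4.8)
The plan is to obtain the measure-valued control at the minimal time $T_{\min}$ as a weak-$\star$ limit of the $L^\infty$-controls that exist for each $T>T_{\min}$ by Theorem \ref{main3}. First I would fix a decreasing sequence $T_k\downarrow T_{\min}$ and, for each $k$, invoke Theorem \ref{main3} to produce a non-negative control $g_k\in L^\infty(\mathcal O\times(0,T_k))\cap L^2((0,T_k);\widetilde H_0^s(\mathcal O))$ steering $u_0$ to $\widehat u(\cdot,T_k)$. Extending each $g_k$ by its steady reference value $\widehat g$ on the time slab $(T_k,T_1)$ (or, more simply, working on the common interval $(0,T_1)$ and truncating), one reduces everything to controls defined on the fixed cylinder $Q:=\mathcal O\times(0,T_1)$. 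The crucial quantitative input is a \emph{uniform} bound $\|g_k\|_{\mathcal M(Q)}\le C$ independent of $k$; this is where the argument really lives.

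To get that uniform bound I would pass through the dual (observability) side. For the adjoint system
\begin{align*}
\begin{cases}
-\partial_t\varphi+(-\partial_x^2)^s\varphi=0 & \text{in }(-1,1)\times(0,T),\\
\varphi=0 & \text{in }(\Omc)\times(0,T),\\
\varphi(\cdot,T)=\varphi_T & \text{in }(-1,1),
\end{cases}
\end{align*}
the $L^\infty$-controllability in time $T$ established in Theorem \ref{main1}/\ref{main3} is equivalent to an observability estimate of the form $\|S(T)^\star\varphi_T\|_{\text{something}}\le C(T)\,\|\mathcal N_s\varphi\|_{L^1(\mathcal O\times(0,T))}$ (the $L^1$-observation of the nonlocal normal derivative appearing in the introduction). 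The point is to track the behaviour of the observability constant $C(T)$ as $T\downarrow T_{\min}$: because for \emph{every} $T>T_{\min}$ the constrained problem is solvable, a duality/minimal-norm argument (à la Loh\'eac--Tr\'elat--Zuazua and Pighin--Zuazua) yields controls whose $\mathcal M$-norm is controlled by the norm-optimal control for the associated moment problem, and one shows this stays bounded as $T\downarrow T_{\min}$ precisely because $T_{\min}>0$ (Theorem \ref{main4}) keeps us away from the degenerate regime. Concretely, I would set up the problem as minimizing $\|g\|_{\mathcal M(\mathcal O\times(0,T_k))}$ subject to the (affine, closed, non-empty by Theorem \ref{main3}) constraint of reaching $\widehat u(\cdot,T_k)$ while $g\ge 0$, note the minimum is attained in $\mathcal M$ since the constraint set is weak-$\star$ closed, and then prove the minimal norms $m_k$ are bounded — using that the map $T\mapsto m(T)$ is monotone (a control on a longer interval restricts to a shorter-interval control only after adjusting the target, so one must be a little careful) together with the fact that at $T_{\min}$ the value is finite by a limiting/compactness argument rather than circular.

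Once the uniform bound is in hand, the rest is soft. By Banach--Alaoglu, $g_k\rightharpoonup^\star g$ in $\mathcal M(Q)$ along a subsequence; non-negativity is preserved under weak-$\star$ limits of positive measures, so $g\ge 0$. It remains to show $g$ is a control at time $T_{\min}$, i.e. that the solution map $g\mapsto u(\cdot,T)$ is continuous from $\mathcal M$ with weak-$\star$ topology into, say, $H^{-s}(-1,1)$ weak (or $L^2$ weak), and that $u_k(\cdot,T_k)=\widehat u(\cdot,T_k)\to\widehat u(\cdot,T_{\min})$ — the latter by continuity of the reference trajectory, the former by testing the (transposition) definition of the very weak solution of \eqref{eq-main} with solutions of the adjoint equation with smooth data, whose nonlocal normal derivatives $\mathcal N_s\varphi$ lie in $C(\overline{\mathcal O}\times[0,T])$ so that $\int g_k\,\mathcal N_s\varphi\to\int g\,\mathcal N_s\varphi$. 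Here the hypothesis $\mathcal O\subset\overline{\mathcal O}\subset(\R\setminus[-1,1])$ is used: it guarantees $\overline{\mathcal O}$ stays away from the boundary $\{\pm1\}$, so that $\mathcal N_s\varphi$ is genuinely continuous up to $\overline{\mathcal O}$ and pairs with Radon measures. Subtracting the free evolution and the contribution of $\widehat g$ reduces the identity to the statement that $u(\cdot,T_{\min})=\widehat u(\cdot,T_{\min})$.

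The main obstacle I expect is the uniform $\mathcal M$-bound on $g_k$ as $T_k\downarrow T_{\min}$: showing it requires controlling the blow-up rate of the observability/controllability cost near the minimal time and relating the \emph{constrained} (positive) minimal-norm problem to the \emph{unconstrained} moment problem, which is exactly the delicate interplay between the positivity constraint and the $L^1$-type observability of the nonlocal normal derivative that makes the exterior setting harder than the interior one treated in \cite{biccari2019controllability}. A clean way around a direct estimate is a compactness/contradiction argument: if $m_k\to\infty$, renormalize $g_k/m_k$, extract a weak-$\star$ limit $\nu\ge 0$ with $\|\nu\|_{\mathcal M}\le 1$ that steers $0$ to $0$ with $\nu\ge0$; one then argues that such a nonzero non-negative "control of the trivial problem" cannot exist (using backward uniqueness for the fractional heat semigroup together with Lemma \ref{lem-UCD}, which forbids a nontrivial adjoint state with vanishing $\mathcal N_s$-trace on $\mathcal O$), forcing $\nu=0$ and contradicting a normalization that keeps $\|\nu\|_{\mathcal M}$ bounded below — the standard Pighin--Zuazua scheme, adapted to the nonlocal normal derivative.
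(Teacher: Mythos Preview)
Your overall architecture matches the paper's: pick $T_k\downarrow T_{\min}$, take the non-negative $L^\infty$-controls from Theorem~\ref{main3}, extend by $\widehat g$ to a common time interval, extract a weak-$\star$ limit in $\mathcal M$, and pass to the limit in the transposition identity using adjoint solutions with smooth final data so that $\mathcal N_s\psi\in C(\overline{\mathcal O}\times[0,T])$. That part is fine.

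The genuine gap is in the uniform $\mathcal M$-bound, which you correctly identify as the heart of the matter but do not resolve. Your first suggestion---tracking the observability/controllability cost as $T\downarrow T_{\min}$---does not obviously work: the cost in Theorem~\ref{main1} controls the \emph{unconstrained} $L^\infty$-norm, and there is no reason the \emph{constrained} minimal norms $m_k$ should inherit a bound from it near $T_{\min}$. Your second suggestion, the renormalization/contradiction scheme, has the standard flaw: after normalizing $g_k/m_k$ and passing to a weak-$\star$ limit $\nu$ in $\mathcal M$, you only get $\|\nu\|_{\mathcal M}\le 1$, not $\|\nu\|_{\mathcal M}=1$. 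Mass can escape (e.g.\ concentrate at $t=T_{\min}$ and disappear in the limit), so proving $\nu=0$ yields no contradiction. Unique continuation (Lemma~\ref{lem-UCD}) does not help here.

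The paper obtains the bound by a short direct argument that exploits positivity rather than fighting it. Take as final datum in \eqref{Dual} the first eigenfunction $\varphi_1>0$; the corresponding adjoint solution satisfies $\psi(x,t)\ge\alpha>0$ on $(-1,1)\times(0,T_{\min}+1)$. Since $\psi$ vanishes outside $(-1,1)$, for $x\in\mathcal O$ one has
\[
\mathcal N_s\psi(x,t)=-C_s\int_{-1}^1\frac{\psi(y,t)}{|x-y|^{1+2s}}\,dy\le -C_s\,\alpha\int_{-1}^1\frac{dy}{|x-y|^{1+2s}}\le -\beta<0,
\]
where the lower bound on the kernel integral uses $\overline{\mathcal O}\subset\R\setminus[-1,1]$ (this is the second place the hypothesis is needed, not only for continuity of $\mathcal N_s\psi$). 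Then, since $g^{T_k}\ge 0$, the transposition identity gives
\[
\beta\,\|g^{T_k}\|_{L^1(\mathcal O\times(0,T_{\min}+1))}\le -\!\int\! g^{T_k}\mathcal N_s\psi
=\langle u(\cdot,T_{\min}+1),\varphi_1\rangle-\int_{-1}^1 u_0\,\psi(\cdot,0)\le M,
\]
uniformly in $k$. This single test function, chosen so that $\mathcal N_s\psi$ has a strict sign, replaces your entire contradiction machinery.
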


\subsection{Well-posedness of the parabolic problems}\label{sec-3}

In this section we collect some well-known results contained in \cite{War-ACE,WaZa1} regarding the well-posedness and the series representation of solutions to the system \eqref{eq-main} and the associated dual system. In addition, we shall recall the maximum principle for fractional heat equations.

Throughout the remainder of the article, without any mention, $(\varphi_n)_{n\in\NN}$ denotes the orthonormal basis of eigenfunctions of the operator $(-\partial_x^2)_D^s$ associated with the eigenvalues $(\lambda_n)_{n\in\NN}$. If $u\in L^2(-1,1)$, then we shall let $u_n:=(u,\varphi_n)_{L^2(-1,1)}$.
Furthermore, for a given measurable set $E\subseteq \RR^N$ ($N\ge 1$), we shall denote by $(\cdot,\cdot)_{L^2(E)}$ the scalar product in $L^2(E)$. 

Next, we introduce our notion of weak solutions.

\begin{definition}
Let $g\in L^{\infty}(\RR\setminus(-1,1))\times(0,T))\cap L^2((0,T);H^s(\RR\setminus(-1,1)))$ and $h\in L^{\infty}(\RR\times(0,T))\cap L^2((0,T);H^s(\RR))$ be such that $h|_{\RR\setminus(-1,1)}=g$. We shall say that a function 
$$u\in C([0,T];L^2(-1,1))\cap L^2((0,T);H^s(\RR))\cap H^1((0,T);\widetilde H^{-s}(\Om))$$
 is a weak solution of the system \eqref{eq-main}, if 
 $$(u-h)\in L^2((0,T);\widetilde H_0^s(\Om))\cap H^1((0,T);H^{-s}(-1,1)), \; u(\cdot,0)=u_0\mbox{ a.e. in } (-1,1),$$
 and the identity
\begin{align*}
\langle u_t(\cdot,t), v\rangle_{-s,s} +\frac{C_s}{2}\int_{\RR}\int_{\RR}\frac{(u(x,t)-u(y,t))(v(x)-v(y))}{|x-y|^{1+2s}}dxdy=0
\end{align*}
holds for every $v\in \widetilde H_0^s(-1,1)$ and almost every $t\in(0,T)$.
\end{definition}

We have the following existence result and the explicit representation of solutions in terms of series.
The proof can be found in \cite{War-ACE,WaZa1}.

\begin{theorem}\label{expl-2}
Let $\mathcal O\subset\Omc$ be an arbitrary non-empty open set.
Then, for every $u_0\in L^2(-1,1)$ and $g\in L^{\infty}(\mathcal{O}\times(0,T))\cap L^2((0,T);\widetilde H_0^s(\mathcal{O}))$, the system \eqref{eq-main} has a unique weak solution $u$ given by
\begin{align}\label{4}
u(x,t)=\sum_{n=1}^{\infty}u_{0,n}e^{-\lambda_n t}\varphi_n(x)+\sum_{n=1}^{\infty}\left(\int_0^t (g(\cdot,\tau), \mathcal{N}_s\varphi_n)_{L^2(\mathcal{O})}  e^{-\lambda_n(t-\tau)}d\tau\right)\varphi_n(x).
\end{align}
\end{theorem}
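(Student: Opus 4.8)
\textbf{Proof plan for Theorem \ref{expl-2}.}

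The plan is to prove existence, uniqueness, and the series representation \eqref{4} by the standard lifting-and-spectral-decomposition method, adapted to the nonlocal setting where the exterior datum $g$ plays the role of the inhomogeneity. First I would reduce to homogeneous exterior data. Since $\mathcal O\subset\Omc$ is bounded and $g\in L^2((0,T);\widetilde H_0^s(\mathcal O))$, extending $g$ by zero gives an admissible lift $h\in L^2((0,T);H^s(\RR))$ with $h|_{\Omc}=g$ and $h=0$ on $(-1,1)$; set $w:=u-h$. By the Definition of weak solution, $w$ must satisfy $w\in L^2((0,T);\widetilde H_0^s(\Om))$ with $w(\cdot,0)=u_0$ (as $h(\cdot,0)=0$ on $(-1,1)$) and, testing against $v\in\widetilde H_0^s(-1,1)$,
\begin{align*}
\langle w_t,v\rangle_{-s,s}+\mathcal F(w,v)=-\langle h_t,v\rangle_{-s,s}-\mathcal F(h,v),
\end{align*}
where $\mathcal F$ is the closed form \eqref{closed}. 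Because $v$ vanishes outside $(-1,1)$ while $h$ vanishes inside, the integration-by-parts formula \eqref{Int-Part} (or a direct splitting of the double integral in $\mathcal F(h,v)$) collapses the right-hand side to a boundary-type term involving the nonlocal normal derivative: $\mathcal F(h,v)=-\int_{\Omc}v\,\mathcal N_s h\,dx$ is not quite it since $v=0$ on $\Omc$; rather the cross terms yield $\mathcal F(h,v)=-C_s\int_{-1}^1\int_{\mathcal O}\frac{v(x)g(y,t)}{|x-y|^{1+2s}}\,dy\,dx$, which I recognize as $-\sum_n (g(\cdot,t),\mathcal N_s\varphi_n)_{L^2(\mathcal O)}v_n$ after expanding $v=\sum_n v_n\varphi_n$ and using the definition \eqref{NLND} of $\mathcal N_s\varphi_n$ together with $\varphi_n=0$ on $\Omc$.

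Next I would diagonalize. Writing $w(\cdot,t)=\sum_n w_n(t)\varphi_n$ and taking $v=\varphi_m$, the variational identity becomes the family of scalar ODEs
\begin{align*}
w_m'(t)+\lambda_m w_m(t)=(g(\cdot,t),\mathcal N_s\varphi_m)_{L^2(\mathcal O)},\qquad w_m(0)=u_{0,m},
\end{align*}
whose Duhamel solution is $w_m(t)=u_{0,m}e^{-\lambda_m t}+\int_0^t (g(\cdot,\tau),\mathcal N_s\varphi_m)_{L^2(\mathcal O)}\,e^{-\lambda_m(t-\tau)}\,d\tau$. Adding back $h$ (which contributes nothing to the restriction of $u$ to $(-1,1)$, since $h=0$ there) yields precisely \eqref{4} as the candidate solution on $(-1,1)$, and on $\Omc$ one has $u=g$ by construction. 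Uniqueness follows from the energy identity: the difference of two weak solutions has zero exterior data and zero initial datum, so taking $v=w$ in the homogeneous equation and using the coercivity of $\mathcal F$ gives $\frac{d}{dt}\tfrac12\|w\|_{L^2(-1,1)}^2+\mathcal F(w,w)=0$, whence $w\equiv 0$.

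The main obstacle is establishing that the candidate series \eqref{4} actually lies in the regularity class of the Definition and that the formal termwise computation is rigorous. This requires two quantitative inputs: a uniform bound on the inner products $(g(\cdot,\tau),\mathcal N_s\varphi_n)_{L^2(\mathcal O)}$ that is summable after multiplication by the heat kernel, and convergence of the series in $C([0,T];L^2(-1,1))\cap L^2((0,T);H^s(\RR))\cap H^1((0,T);\widetilde H^{-s}(\Om))$. The key estimate is that $\mathcal N_s\varphi_n$ is controlled by $\lambda_n$ in a suitable dual norm, so that $|(g(\cdot,\tau),\mathcal N_s\varphi_n)_{L^2(\mathcal O)}|\lesssim \|g(\cdot,\tau)\|_{\widetilde H_0^s(\mathcal O)}\,\lambda_n^{1/2}$ (this is exactly the reason $g\in L^2((0,T);\widetilde H_0^s(\mathcal O))$ is assumed); combined with the exponential factors and the standard parabolic smoothing $\int_0^t e^{-\lambda_n(t-\tau)}\lambda_n^{1/2}\,d\tau\lesssim 1$, the $L^2$-in-time bound for the gradient part closes. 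Since all of this is carried out in full in \cite{War-ACE,WaZa1}, I would cite those references for the convergence estimates and present here only the reduction and the spectral computation that produce \eqref{4}.
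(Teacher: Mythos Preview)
The paper does not give its own proof of this theorem; it simply records the statement and refers to \cite{War-ACE,WaZa1} for the argument. Your plan---lift the exterior datum to reduce to homogeneous data, diagonalize in the eigenbasis of $(-\partial_x^2)_D^s$, solve the resulting scalar ODEs by Duhamel, and then check convergence of the series using the $L^2((0,T);\widetilde H_0^s(\mathcal O))$-regularity of $g$---is exactly the standard route taken in those references (and you yourself close by deferring to them), so you are aligned with what the paper invokes. One small slip: the statement does not assume $\mathcal O$ is bounded, and your lifting step does not need this hypothesis either.
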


Using the classical integration by parts formula, we have that the following backward system
\begin{equation}\label{Dual}
\begin{cases}
-\partial_t \psi +(-\partial_x^2)^s\psi=0\;\;&\mbox{ in }\; (-1,1)\times (0,T),\\
\psi=0&\mbox{ in }\;(\Omc)\times (0,T),\\
\psi(\cdot,T)=\psi_T&\mbox{ in }\;\Omega,
\end{cases}
\end{equation} 
can be viewed as the dual system associated with \eqref{eq-main}.

\begin{definition}
Let $\psi_T\in L^2(-1,1)$.
By a weak solution to \eqref{Dual}, we mean a function 
$$\psi\in C([0,T];L^2(-1,1))\cap L^2((0,T);\widetilde H_0^s(-1,1))\cap H^1((0,T);\widetilde H^{-s}(-1,1)),$$ 
such that $\psi(\cdot,T)=\psi_T$ a.e. in $(-1,1)$, and the identify
\begin{align*}
-\langle \psi_t(\cdot,t), v\rangle_{-s,s} +\frac{C_s}{2}\int_{\RR}\int_{\RR}\frac{(\psi(x,t)-\psi(y,t))(v(x)-v(y))}{|x-y|^{1+2s}}dxdy=0
\end{align*}
holds for every $v\in \widetilde H_0^s(-1,1)$ and almost every $t\in(0,T)$.
\end{definition}

We have the following existence result (see e.g. \cite{War-ACE,WaZa1}).

\begin{theorem}\label{theo-48}
Let $\psi_T\in  L^2(0,1)$. Then, the dual system \eqref{Dual} has a unique weak solution $\psi$ which is given by
\begin{align}\label{eq-25}
\psi(x,t)=\sum_{n=1}^{\infty}\psi_{0,n}e^{-\lambda_n(T-t)}\varphi_{n}(x).
\end{align}
In addition the following assertions hold.
\begin{enumerate}
\item There is a constant $C>0$ such that for all $t\in [0,T]$,
\begin{equation*}
 \|\psi(\cdot,t)\|_{L^2(-1,1)}\le C\|\psi_T\|_{L^2(-1,1)}.
\end{equation*}
\item For every $t\in[0,T)$ fixed, $\mathcal{N}_s \psi(\cdot,t)$ exists, belongs to $L^2(\Omc)$ and is given by
\begin{align}\label{norm-der}
\mathcal{N}_s \psi(x,t)=\sum_{n=1}^{\infty}\psi_{0,n}e^{-\lambda_n(T-t)}\mathcal{N}_s\varphi_{n}(x).
\end{align}
\end{enumerate}
In \eqref{eq-25} and \eqref{norm-der} we have set $\psi_{0,n}:=(\psi_T,\varphi_n)_{L^2(-1,1)}$.
\end{theorem}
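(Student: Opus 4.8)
The plan is to reduce the backward problem \eqref{Dual} to a forward Cauchy problem and then exploit the spectral decomposition of the self-adjoint operator $(-\partial_x^2)_D^s$. First I would perform the time reversal $\tau:=T-t$ and set $w(\cdot,\tau):=\psi(\cdot,T-\tau)$, which turns \eqref{Dual} into the standard forward problem $\partial_\tau w+(-\partial_x^2)_D^s w=0$ on $(0,T)$ with $w(\cdot,0)=\psi_T$ and $w=0$ in $\Omc$. Since $(-\partial_x^2)_D^s$ is self-adjoint, non-negative and has compact resolvent with orthonormal eigenbasis $(\varphi_n)_{n\in\NN}$, it generates a strongly continuous (indeed analytic) semigroup on $L^2(-1,1)$, and the unique solution is $w(\cdot,\tau)=\sum_{n}\psi_{0,n}e^{-\lambda_n\tau}\varphi_n$. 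Undoing the change of variable yields \eqref{eq-25}. The regularity memberships $\psi\in C([0,T];L^2(-1,1))\cap L^2((0,T);\widetilde H_0^s(-1,1))\cap H^1((0,T);\widetilde H^{-s}(-1,1))$ and the fact that \eqref{eq-25} satisfies the weak formulation follow from the standard parabolic estimates attached to the form $\mathcal F$, the term-by-term differentiation being legitimate because of the exponential decay of the coefficients; uniqueness comes from an energy estimate, testing the homogeneous problem against the solution itself and invoking the coercivity of $\mathcal F$ on $\widetilde H_0^s(-1,1)$.

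Assertion (1) is then immediate from orthonormality and Parseval's identity: for $t\in[0,T]$,
\begin{equation*}
\|\psi(\cdot,t)\|_{L^2(-1,1)}^2=\sum_{n=1}^\infty \psi_{0,n}^2\,e^{-2\lambda_n(T-t)}\le\sum_{n=1}^\infty\psi_{0,n}^2=\|\psi_T\|_{L^2(-1,1)}^2,
\end{equation*}
since $\lambda_n>0$ and $T-t\ge 0$; so the constant $C=1$ works and in fact one gets exponential decay.

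For assertion (2) I would fix $t\in[0,T)$ and put $\delta:=T-t>0$. The decisive feature is that the coefficients $\psi_{0,n}e^{-\lambda_n\delta}$ decay faster than any power of $\lambda_n$, while each partial sum $\psi_N(\cdot,t):=\sum_{n=1}^N\psi_{0,n}e^{-\lambda_n\delta}\varphi_n$ is a finite combination of eigenfunctions, each lying in $D((-\partial_x^2)_D^s)$ and hence admitting a well-defined normal derivative $\mathcal N_s\varphi_n$. Using $\varphi_n=0$ in $\Omc$, one has the explicit expression $\mathcal N_s\varphi_n(x)=-C_s\int_{-1}^1|x-y|^{-1-2s}\varphi_n(y)\,dy$ for $x\in\Omc$. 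The quantitative ingredient to establish is a polynomial bound $\|\mathcal N_s\varphi_n\|_{L^2(\Omc)}\le C\lambda_n^{\beta}$ for some $\beta>0$, obtained from the boundary regularity of the eigenfunctions and this kernel representation. Granting it, Cauchy--Schwarz gives
\begin{equation*}
\sum_{n=1}^\infty|\psi_{0,n}|\,e^{-\lambda_n\delta}\,\|\mathcal N_s\varphi_n\|_{L^2(\Omc)}\le C\Big(\sum_{n=1}^\infty\psi_{0,n}^2\Big)^{1/2}\Big(\sum_{n=1}^\infty\lambda_n^{2\beta}e^{-2\lambda_n\delta}\Big)^{1/2},
\end{equation*}
and the last factor is finite because $\lambda_n\to\infty$ (Weyl-type asymptotics $\lambda_n\sim cn^{2s}$ make the series converge). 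Hence $\sum_n\psi_{0,n}e^{-\lambda_n\delta}\mathcal N_s\varphi_n$ converges absolutely in $L^2(\Omc)$. To identify its sum with $\mathcal N_s\psi(\cdot,t)$, I would pass to the limit $N\to\infty$ in the integration-by-parts identity \eqref{Int-Part} applied to $u=\psi_N(\cdot,t)$ against an arbitrary $v\in H^s(\RR)$: one has $\psi_N(\cdot,t)\to\psi(\cdot,t)$ in the graph norm (since $\sum_{n>N}(1+\lambda_n^2)\psi_{0,n}^2e^{-2\lambda_n\delta}\to0$), so $\mathcal F(\psi_N,v)\to\mathcal F(\psi,v)$ and $\int_{-1}^1 v\,(-\partial_x^2)^s\psi_N\to\int_{-1}^1 v\,(-\partial_x^2)^s\psi$, forcing the boundary term $\int_{\Omc}v\,\mathcal N_s\psi_N$ to converge to $\int_{\Omc}v\,\big[\sum_n\psi_{0,n}e^{-\lambda_n\delta}\mathcal N_s\varphi_n\big]$; by \eqref{Int-Part} this limit is exactly $\mathcal N_s\psi(\cdot,t)$, giving \eqref{norm-der}.

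The main obstacle is precisely the quantitative eigenfunction estimate $\|\mathcal N_s\varphi_n\|_{L^2(\Omc)}\le C\lambda_n^{\beta}$ together with the rigorous interchange of $\mathcal N_s$ with the infinite sum; everything else is routine once this is secured, thanks to the super-exponential decay of the coefficients for $t<T$. The delicate point is the behaviour near the endpoints $\{-1,1\}$: the eigenfunctions vanish like $\mathrm{dist}(x,\{-1,1\})^{s}$, which makes $\mathcal N_s\varphi_n$ singular of order $\mathrm{dist}(x,\{-1,1\})^{-s}$ just outside the interval. This singularity is square-integrable away from the two endpoints, so the control on $\|\mathcal N_s\varphi_n\|_{L^2}$ (and, correspondingly, the space in which the control region $\mathcal O$ is taken) is governed by this boundary analysis; for $\mathcal O$ at positive distance from $\{-1,1\}$ the kernel $|x-y|^{-1-2s}$ is bounded, the required bound follows directly with $\|\mathcal N_s\varphi_n\|_{L^2}\le C\|\varphi_n\|_{L^1(-1,1)}\le C'$, and the argument closes cleanly.
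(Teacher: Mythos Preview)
The paper does not prove this theorem: it is recorded as a known result with a reference to \cite{War-ACE,WaZa1} (see the sentence immediately preceding the statement). Your outline --- time reversal to a forward problem, the analytic semigroup generated by the self-adjoint operator $(-\partial_x^2)_D^s$ and its spectral resolution, Parseval for assertion (1), and parabolic smoothing at $t<T$ combined with a polynomial bound on $\|\mathcal N_s\varphi_n\|$ for assertion (2) --- is the standard route and matches what those references do, so there is nothing substantive to compare.

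Your closing paragraph correctly isolates the only genuine subtlety. The boundary behaviour $\varphi_n\sim\dist(\cdot,\{-1,1\})^s$ does force $\mathcal N_s\varphi_n(x)\sim\dist(x,\{-1,1\})^{-s}$ just outside $[-1,1]$, and for $s\ge\tfrac12$ this is \emph{not} square-integrable near the endpoints; so assertion (2), read literally on all of $\Omc$, is delicate precisely in the regime of interest. The paper, however, only ever uses $\mathcal N_s\psi$ restricted to a control set $\mathcal O\subset(\Omc)$ (cf.\ the observability inequality \eqref{observ} and the hypothesis $\overline{\mathcal O}\subset\R\setminus[-1,1]$ in Theorem~\ref{radon measure}), and on any such set your final argument closes cleanly: the kernel $|x-y|^{-1-2s}$ is bounded on $\mathcal O\times(-1,1)$, giving $\|\mathcal N_s\varphi_n\|_{L^2(\mathcal O)}\le C\|\varphi_n\|_{L^1(-1,1)}\le C'$ uniformly in $n$, and the series \eqref{norm-der} converges absolutely in $L^2(\mathcal O)$.
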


We conclude this section with the comparison principle taken from \cite[Corollary 2.11]{andreu2010nonlocal}. This will be used in the proof of our main results.

\begin{theorem}\label{theo-28}
Let $u_0$ and $v_0$ be such that $u_0\geq v_0$ a.e. in $(-1,1)$ and let $g,h$ be such that $g\geq h$ a.e. in $(\Omc)\times (0,T)$. Let $u$  be the weak solution of \eqref{eq-main} with initial datum $u_0$ and exterior datum $g$.  Let $v$  be the weak solution of \eqref{eq-main} with initial datum $v_0$ and exterior datum $h$. Then $u\geq v$ a.e. in $(-1,1)\times (0,T)$.
\end{theorem}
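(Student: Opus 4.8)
The final statement (Theorem~\ref{theo-28}) is a comparison principle, and the plan is to prove it by the Stampacchia truncation (energy) method applied to the difference of the two solutions. Set $w:=u-v$. By the linearity of the equation and of the weak formulation, the function $w$ is the weak solution of the \emph{homogeneous} fractional heat equation on $(-1,1)\times(0,T)$ with exterior datum $g-h$ and initial datum $u_0-v_0$; by hypothesis $g-h\ge 0$ a.e.\ in $(\Omc)\times(0,T)$ and $u_0-v_0\ge 0$ a.e.\ in $(-1,1)$. Writing $w^-:=\max(-w,0)$, the assertion $u\ge v$ is equivalent to $w^-=0$ a.e. The structural point that makes the method work is that, since $w\ge 0$ a.e.\ in $\Omc\times(0,T)$, the negative part satisfies $w^-(\cdot,t)=0$ a.e.\ in $\Omc$; together with $w(\cdot,t)\in H^s(\R)$ (which holds because $u,v\in L^2((0,T);H^s(\R))$) and the fact that truncation is nonexpansive on the Gagliardo seminorm, this yields $w^-(\cdot,t)\in\widetilde H_0^s(-1,1)$ for a.e.\ $t$, so that $w^-(\cdot,t)$ is an \emph{admissible} test function.

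First I would subtract the weak formulations satisfied by $u$ and $v$ and test the resulting identity with $\phi=w^-(\cdot,t)$, which gives, for a.e.\ $t\in(0,T)$,
\begin{equation*}
\langle w_t(\cdot,t),w^-(\cdot,t)\rangle_{-s,s}+\frac{C_s}{2}\int_{\R}\int_{\R}\frac{(w(x,t)-w(y,t))(w^-(x,t)-w^-(y,t))}{|x-y|^{1+2s}}\,dx\,dy=0.
\end{equation*}
For the bilinear term I would invoke the elementary pointwise inequality
\begin{equation*}
(a-b)(a^--b^-)\le-(a^--b^-)^2\le 0\qquad\text{for all }a,b\in\R,
\end{equation*}
which follows by checking the four sign cases, or by substituting $a=-p$, $b=-q$ and using $(p-q)(p^+-q^+)\ge(p^+-q^+)^2$; thus the double integral is $\le 0$. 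For the time term I would use the chain rule for the duality pairing, $\langle w_t(\cdot,t),w^-(\cdot,t)\rangle_{-s,s}=-\tfrac12\frac{d}{dt}\|w^-(\cdot,t)\|_{L^2(-1,1)}^2$. Combining the two, the identity above forces $\frac{d}{dt}\|w^-(\cdot,t)\|_{L^2(-1,1)}^2\le 0$, so $t\mapsto\|w^-(\cdot,t)\|_{L^2(-1,1)}^2$ is nonincreasing. Since $w(\cdot,0)=u_0-v_0\ge 0$ gives $\|w^-(\cdot,0)\|_{L^2(-1,1)}=0$, we conclude $\|w^-(\cdot,t)\|_{L^2(-1,1)}=0$ for all $t\in[0,T]$, i.e.\ $w\ge 0$, and hence $u\ge v$ a.e.\ in $(-1,1)\times(0,T)$.

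The main obstacle is the rigorous justification of the time term, i.e.\ the chain rule $\langle w_t,w^-\rangle_{-s,s}=-\tfrac12\frac{d}{dt}\|w^-\|_{L^2(-1,1)}^2$ for a function $w$ that lies only in the energy class $L^2((0,T);H^s(\R))$ with time derivative in the dual space $\widetilde H^{-s}(-1,1)$. This is a Mignot / Lions--Magenes type result for the merely Lipschitz truncation $r\mapsto r^-$, and it must be carried out for the $\widetilde H^{-s}$--$\widetilde H_0^s$ pairing rather than the classical $H^{-1}$--$H^1_0$ one; the standard route is to approximate $r\mapsto r^-$ by smooth monotone functions, apply the classical integration-by-parts in time to the regularized expression, and pass to the limit. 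A secondary technical point is verifying $w^-\in L^2((0,T);\widetilde H_0^s(-1,1))$, which follows from the nonexpansiveness of truncation in $H^s(\R)$ noted above. Alternatively, since this statement coincides with \cite[Corollary~2.11]{andreu2010nonlocal}, one may simply invoke that reference; the energy argument sketched here is the self-contained proof behind it.
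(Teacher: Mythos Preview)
The paper does not actually prove this statement: it is quoted verbatim as ``taken from \cite[Corollary~2.11]{andreu2010nonlocal}'' and no argument is given. Your proposal therefore goes further than the paper by supplying the standard self-contained Stampacchia truncation argument, and your sketch is correct: the key points---that $w^-(\cdot,t)$ vanishes in $\Omc$ and hence lies in $\widetilde H_0^s(-1,1)$, the pointwise inequality $(a-b)(a^--b^-)\le -(a^--b^-)^2$, and the Mignot-type chain rule for the time derivative---are exactly what is needed, and you have correctly identified the only genuinely delicate step (the chain rule in the $\widetilde H^{-s}$--$\widetilde H_0^s$ pairing). You also note at the end that one may simply invoke the cited reference, which is precisely what the paper does; so your proposal both subsumes the paper's treatment and explains the mechanism behind it.
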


\section{Null controllability with $L^{\infty}$-controls without constraints}\label{sec-null}

In this section we analyze the null controllability properties of  \eqref{eq-main} with controls in $L^{\infty}(\mathcal{O}\times(0,T))\cap L^2((0,T); \widetilde H_0^s(\mathcal{O}))$ but without imposing any positivity constraint on the control and/or the state.  These results  shall play a crucial role in the proofs of our main results.

We start by introducing our notion of  null controllability of the system \eqref{eq-main} and an $L^1$-observability inequality for the associated dual system \eqref{Dual}.

\begin{definition}
We say that the system \eqref{eq-main} is null controllable in time $T>0$, if for every $u_0\in L^2(-1,1)$, there exists a control function $g\in L^{\infty}(\mathcal{O}\times(0,T))\cap L^2((0,T);\widetilde H_0^s(\mathcal{O}))$ such that the associated unique weak solution $u$ satisfies
\begin{align}\label{control}
u(x,T)=0\quad \text{ for a.e. }x\in(-1,1).
\end{align}
\end{definition}

\begin{definition}
The system \eqref{Dual} is said to be $L^1$-observable in time $T>0$, if there exists a constant $C=C(T)>0$ such that the inequality 
\begin{align}\label{observ}
\|\psi(\cdot,0)\|_{L^2(-1,1)}^2\leq C\left(\int_0^T\int_{\mathcal{O}}|\mathcal{N}_s\psi(x,t)|dxdt\right)^2
\end{align}
holds for every $\psi_T\in L^2(-1,1)$, where $\psi$ is the unique weak solution of \eqref{Dual} with final datum $\psi_T$, and $\mathcal{N}_s\psi$ is the nonlocal normal derivative of $\psi$ given in \eqref{norm-der}.
\end{definition}

We have the following result.

\begin{theorem}\label{main1}
Let $\mathcal O\subset(\Omc)$ be an arbitrary nonempty open set.
Then the following assertions are equivalent.
\begin{enumerate}
\item For every $u_0\in L^2(-1,1)$ and $T>0$, the system \eqref{eq-main} is null controllable in time $T>0$. Moreover, there is a constant $C_1>0$ ( independent of $u_0$) such that the control $g$ satisfies the following estimate:
\begin{align}\label{bound of control}
\|g\|_{L^{\infty}(\mathcal{O}\times(0,T))}\le \|g\|_{L^\infty(\mathcal O\times (0,T))\cap L^2((0,T);\widetilde H_0^s(\mathcal O))} \leq C_1 \|u_0\|_{L^2(-1,1)}.
\end{align}

\item For every $T>0$ and  $\psi_T\in L^2(-1,1)$ the dual system \eqref{Dual} is $L^1$-observable.
\end{enumerate}
\end{theorem}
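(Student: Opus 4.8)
The plan is to prove the equivalence between the null controllability of \eqref{eq-main} with $L^\infty$-controls (together with the $L^\infty \cap L^2((0,T);\widetilde H_0^s(\mathcal O))$ bound) and the $L^1$-observability inequality \eqref{observ} for the adjoint system \eqref{Dual}, following the classical duality machinery for $L^\infty$-controls but adapted to the nonlocal exterior setting. The pairing between the two systems is the integration by parts formula \eqref{Int-Part}: if $u$ solves \eqref{eq-main} with datum $u_0$ and control $g$, and $\psi$ solves \eqref{Dual} with final datum $\psi_T$, then testing and integrating in time gives the fundamental identity
\begin{align*}
(u(\cdot,T),\psi_T)_{L^2(-1,1)} - (u_0,\psi(\cdot,0))_{L^2(-1,1)} = \int_0^T\int_{\mathcal O} g(x,t)\,\mathcal N_s\psi(x,t)\,dx\,dt,
\end{align*}
where the boundary term on $\Omc$ collapses to an integral over $\mathcal O$ because $\psi = 0$ in $\Omc$ and $g$ is supported in $\mathcal O$ (here one uses that $\mathcal N_s\psi(\cdot,t)\in L^2(\Omc)$ by Theorem \ref{theo-48}(2) and that $g\in L^2((0,T);\widetilde H_0^s(\mathcal O))$). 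Thus $u(\cdot,T)=0$ is equivalent to requiring $\int_0^T\int_{\mathcal O} g\,\mathcal N_s\psi\,dx\,dt = (u_0,\psi(\cdot,0))_{L^2(-1,1)}$ for all $\psi_T\in L^2(-1,1)$.

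For the implication (2) $\To$ (1), I would set up the standard variational/penalization argument. Define, for $\psi_T\in L^2(-1,1)$, the functional
\begin{align*}
J(\psi_T) := \frac12\left(\int_0^T\int_{\mathcal O}|\mathcal N_s\psi(x,t)|\,dx\,dt\right)^2 + (u_0,\psi(\cdot,0))_{L^2(-1,1)},
\end{align*}
which is convex and continuous on $L^2(-1,1)$; the observability inequality \eqref{observ} yields the coercivity estimate $J(\psi_T)\ge \tfrac{1}{2C}\|\psi(\cdot,0)\|_{L^2}^2 - \|u_0\|_{L^2}\|\psi(\cdot,0)\|_{L^2}$, hence $J$ is coercive (using that $\|\psi(\cdot,0)\|_{L^2}$ controls $\|\psi_T\|_{L^2}$ up to a constant by backward uniqueness / Theorem \ref{theo-48}(1) applied in reverse, or more directly by working with $\|\psi(\cdot,0)\|_{L^2}$ as the relevant quantity). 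Let $\widehat\psi_T$ be the minimizer. Writing the Euler–Lagrange equation and using that the subdifferential of the squared $L^1$-norm term produces a function $g$ with $\|g\|_{L^\infty(\mathcal O\times(0,T))}\le \int_0^T\int_{\mathcal O}|\mathcal N_s\widehat\psi|\,dx\,dt$ (this is the usual $L^\infty$ bound from duality with $L^1$: $g = \left(\int_0^T\int_{\mathcal O}|\mathcal N_s\widehat\psi|\right)\operatorname{sgn}(\mathcal N_s\widehat\psi)$ where $\widehat\psi$ is the adjoint state generated by $\widehat\psi_T$), one checks that this $g$ steers $u_0$ to $0$. The bound \eqref{bound of control} then follows from $\int_0^T\int_{\mathcal O}|\mathcal N_s\widehat\psi|\,dx\,dt \le \sqrt{C}\,\|\widehat\psi(\cdot,0)\|_{L^2}\le \sqrt{C}\cdot 2\|u_0\|_{L^2}$ (the latter from testing $J(\widehat\psi_T)\le J(0)=0$). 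One subtlety to address carefully here: one must verify $g\in L^2((0,T);\widetilde H_0^s(\mathcal O))$ and not merely $L^\infty$ — since $\mathcal O$ is a bounded open set and $g$ is bounded, $g\chi_{\mathcal O}$ lies in $L^2((0,T);L^2(\mathcal O))$, and one invokes the regularity of $\mathcal N_s\widehat\psi$ (it is smooth in $x$ away from $[-1,1]$, being given by the convergent series \eqref{norm-der} of smooth functions) together with a cutoff to land in $\widetilde H_0^s(\mathcal O)$; alternatively one replaces $\mathcal O$ by a slightly larger set in the observability step. This technical point — producing a control that is simultaneously in $L^\infty$ and in the energy space $L^2((0,T);\widetilde H_0^s(\mathcal O))$ required by the notion of weak solution — is where I expect the main obstacle to lie, since the naive $\operatorname{sgn}$ formula is only $L^\infty$, not $H^s$.

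For the converse (1) $\To$ (2), I would argue by the usual duality/open-mapping style reasoning: the null controllability with the uniform bound \eqref{bound of control} says precisely that the linear map $u_0\mapsto$ (set of admissible controls) is controlled in norm by $\|u_0\|_{L^2}$; dualizing the identity $(u_0,\psi(\cdot,0))_{L^2} = \int_0^T\int_{\mathcal O}g\,\mathcal N_s\psi$ and estimating the right-hand side by $\|g\|_{L^\infty(\mathcal O\times(0,T))}\int_0^T\int_{\mathcal O}|\mathcal N_s\psi|\,dx\,dt \le C_1\|u_0\|_{L^2}\int_0^T\int_{\mathcal O}|\mathcal N_s\psi|\,dx\,dt$, then choosing $u_0 = \psi(\cdot,0)$ (admissible since $\psi(\cdot,0)\in L^2(-1,1)$ by Theorem \ref{theo-48}(1)) gives $\|\psi(\cdot,0)\|_{L^2}^2\le C_1\|\psi(\cdot,0)\|_{L^2}\int_0^T\int_{\mathcal O}|\mathcal N_s\psi|\,dx\,dt$, and dividing yields \eqref{observ} with $C = C_1^2$. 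Throughout, I would rely on Theorem \ref{expl-2} and Theorem \ref{theo-48} for well-posedness and the series representations, on the integration by parts formula \eqref{Int-Part} for the duality identity, and I would note that the actual verification that \eqref{observ} holds (i.e. that assertion (2) is true, not merely equivalent to (1)) is deferred to a separate argument using the Fourier/Müntz-type analysis of $L^1$-observation of real exponential sums, which requires $\tfrac12<s<1$; the present theorem is only the equivalence.
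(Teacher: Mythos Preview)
Your direction (1) $\Rightarrow$ (2) is essentially the paper's argument: both use the duality identity coming from \eqref{Int-Part}, take $u_0=\psi(\cdot,0)$, and estimate the right-hand side by $\|g\|_{L^\infty}\int_0^T\int_{\mathcal O}|\mathcal N_s\psi|$ together with \eqref{bound of control}. (The paper inserts a Young's-inequality step, but your direct division is equivalent.)

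For (2) $\Rightarrow$ (1) the approaches differ. The paper does \emph{not} use a variational/penalization functional. Instead it sets
\[
\mathbb X:=L^1(\mathcal O\times(0,T))\cap L^2\big((0,T);\widetilde H^{-s}(\mathcal O)\big),
\]
defines the linear functional $F(\mathcal N_s\psi):=(u_0,\psi(\cdot,0))_{L^2(-1,1)}$ on the subspace $\Lambda=\{\mathcal N_s\psi|_{\mathcal O\times(0,T)}\}\subset\mathbb X$, uses \eqref{observ} to show $F$ is bounded for the $\mathbb X$-norm, extends by Hahn--Banach to all of $\mathbb X$, and then identifies the extension via Riesz representation with an element
\[
g\in\mathbb X^\star=L^\infty(\mathcal O\times(0,T))\cap L^2\big((0,T);\widetilde H_0^s(\mathcal O)\big).
\]
The point is that by building the \emph{intersection} space $\mathbb X$ from the start, the representing control automatically lies in the intersection of the duals, so the $\widetilde H_0^s$ regularity comes for free. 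This is precisely the device that resolves the obstacle you correctly flagged.

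Your minimization route would produce $g=\big(\int_0^T\!\int_{\mathcal O}|\mathcal N_s\widehat\psi|\big)\operatorname{sgn}(\mathcal N_s\widehat\psi)$, which is only $L^\infty$; the gap you identified is genuine, and your proposed patches do not close it as written. In particular, $\mathcal O$ is \emph{not} assumed bounded in Theorem~\ref{main1} (boundedness enters only later, in Theorems~\ref{main3}--\ref{radon measure}), so the ``$\mathcal O$ bounded $\Rightarrow$ $L^\infty\subset L^2$'' step and the cutoff argument are not available here; and enlarging $\mathcal O$ changes the statement. If you wish to keep the variational approach, the cleanest fix is to minimize over the \emph{sum} of the $L^1$-norm squared and an $L^2((0,T);\widetilde H^{-s}(\mathcal O))$-norm squared, so that the Euler--Lagrange relation produces a control in the corresponding dual intersection --- but at that point you have essentially reproduced the paper's Hahn--Banach argument in disguise.
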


\begin{proof}
(1) $\Rightarrow$ (2): Assume that \eqref{eq-main} is null controllable in time $T>0$. Then there exists a control function $g\in L^{\infty}(\mathcal{O}\times(0,T))\cap L^2((0,T);\widetilde H_0^s(\mathcal{O}))$ such that \eqref{control} holds.  Let $\psi$ be the unique weak solution of  \eqref{Dual} with $\psi_T\in L^2(-1,1)$. Multiplying \eqref{eq-main} with $\psi$, integrating over $(-1,1)\times(0,T)$ and using \eqref{Int-Part}, we get that
\begin{align}\label{mw}
\int_{-1}^1u_0(x)\psi(x,0)dx=\int_0^T\int_{\mathcal{O}}g(x,t)\mathcal{N}_s\psi(x,t)dxdt.
\end{align}
Letting $u_0(x):=\psi(x,0)$ in \eqref{mw} and using the H\"older inequality we obtain  that
\begin{multline}\label{mw2}
\int_{-1}^1|\psi(x,0)|^2dx=\int_0^T\int_{\mathcal{O}}g(x,t)\mathcal{N}_s\psi(x,t)dxdt
\\\leq \|g\|_{L^{\infty}(\mathcal{O}\times(0,T))}\int_0^T\int_{\mathcal{O}}|\mathcal{N}_s\psi(x,t)|dxdt.
\end{multline}
Using \eqref{bound of control} and Young's inequality we get from \eqref{mw2} that 
\begin{align}\label{mw3}
\int_{-1}^1|\psi(x,0)|^2dx\leq \frac{C_1}{2\varepsilon}\|u_0\|_{L^2(-1,1)}^2+\frac{\varepsilon}{2}\left(\int_0^T\int_{\mathcal{O}}|\mathcal{N}_s\psi(x,t)|dxdt\right)^2
\end{align}
for every $\varepsilon>0$.  Taking $\varepsilon:=C_1$ in \eqref{mw3} and since $u_0(x)=\psi(x,0)$, we can deduce that \eqref{observ} holds. 

(2) $\Rightarrow$ (1): We have to show that \eqref{observ} implies the null controllability of \eqref{eq-main}. For every $\psi_T\in L^2(-1,1)$ and $u_0\in L^2(-1,1)$ we have that
\begin{align}\label{wj}
\int_{-1}^1u_0(x)\psi(x,0)dx-\int_{-1}^1u(x,T)\psi_T(x)dx=\int_0^T\int_{\mathcal{O}}g(x,t)\mathcal{N}_s\psi(x,t)dxdt.
\end{align}

Let us consider the linear subspace $\Lambda$ of $\mathbb X:=L^1(\mathcal{O}\times(0,T))\cap L^2((0,T);\widetilde H^{-s}(\mathcal{O}))$ given by:
\begin{align*}
\Lambda:=\Big\{\mathcal{N}_s\psi\Big|_{\mathcal{O}\times(0,T)}\; : \; \psi \text{ solves }\eqref{Dual} \text{ with }\psi_T\in L^2(-1,1)\Big\}.
\end{align*}
Let $u_0\in L^2(-1,1)$ and consider the linear functional $F: \Lambda\to \RR$ defined by
\begin{align*}
F(\mathcal{N}_s\psi):=(u_0,\psi(\cdot,0))_{L^2(-1,1)}.
\end{align*}

 It follows from \eqref{observ} that $F$ is well defined and bounded on $\Lambda$. Namely, using \eqref{observ} we get that
\begin{align*}
	|F(\mathcal{N}_s\psi)| &\le \|u_0\|_{L^2(-1,1)}\|\psi(\cdot,0)\|_{L^2(-1,1)}
	\\
	&\leq C\|u_0\|_{L^2(-1,1)}\|\mathcal{N}_s\psi\|_{L^1(\mathcal{O}\times(0,T))}\leq C\|u_0\|_{L^2(-1,1)}\|\mathcal{N}_s\psi\|_{\mathbb X}. 
\end{align*}

By the Hahn-Banach Theorem, $F$ can be extended to a bounded linear functional $\widetilde{F}:\mathbb X\to\RR$ such that 
\begin{align*}
|\widetilde{F}v|\leq C_1\|u_0\|_{L^2(-1,1)}\|v\|_{\mathbb X}, \quad\forall \;v\in \mathbb X.
\end{align*}

By the Riesz representation Theorem, there is a $g\in \mathbb X^\star=L^{\infty}(\mathcal{O}\times(0,T))\cap L^2((0,T);\widetilde H_0^s(\mathcal{O}))$ such that
\begin{align*}
\|g\|_{L^{\infty}(\mathcal{O}\times(0,T))}\le \|g\|_{\mathbb X^\star}\leq C_1\|u_0\|_{L^2(-1,1)}
\end{align*}
and
\begin{align}\label{JW}
\widetilde{F}(\xi)=\int_0^T\int_{\mathcal{O}}g(x,t)\xi(x,t)dxdt,\quad\forall\;\xi\in\mathbb X.
\end{align}
Notice that $\mathcal N_s\psi\in \mathbb X$. Thus, using the definition of $F$ we get from \eqref{JW} that 
\begin{align*}
F(\mathcal{N}_s\psi)=\int_0^T\int_{\mathcal{O}}g_2(x,t)\mathcal{N}_s\psi(x,t)dxdt=(u_0,\psi(\cdot,0))_{L^2(-1,1)},
\end{align*}
for every $\psi_T\in L^2(-1,1)$. We have shown that there is a control $g\in \mathbb X^\star$ such that \eqref{bound of control} is satisfied and 
\begin{align}\label{new1}
\int_{-1}^1u_0(x)\psi(x,0)dx=\int_0^T\int_{\mathcal{O}}g(x,t)\mathcal{N}_s\psi(x,t)dxdt
\end{align}
for every $\psi_T\in L^2(-1,1)$. It follows from \eqref{wj} and \eqref{new1} that
$\displaystyle\int_{-1}^1u(x,T)\psi_T(x)dx=0$
for every $\psi_T\in L^2(-1,1)$.  Thus, $u(x,T)=0$ for a.e. $x\in (-1,1)$. The proof is finished.
\end{proof}

The results in Theorem \ref{main1} show that, in order to obtain the null controllability of the system \eqref{eq-main}, it is enough to prove the $L^1$-observability inequality \eqref{observ}.
To do this, we need first to establish some auxiliaries results.

We start with the following Ingham-type one recently obtained in \cite[Theorem 2.4]{biccari2019controllability}. 

\begin{theorem}\label{muntz1}
Let $(\mu_n)_{n\geq1}\subset [0,\infty)$ be a sequence satisfying the following conditions:
\begin{enumerate}
\item There exists $\gamma>0$ such that $\mu_{n+1}-\mu_{n}\geq\gamma$ for all $n\geq1$.
\item $\displaystyle\sum_{n\geq1}\frac{1}{\mu_n}<\infty$.
\end{enumerate}
Then, for any $T>0$, there is a constant $C(T)>0$ such that, for any sequence $(c_{n})_{n\geq1}$ of numbers it holds the inequality:
\begin{align}\label{muntz}
\sum_{n\geq1}|c_n|e^{-\mu_n T}\leq C(T)\left\|\sum_{n\geq1}c_n e^{-\mu_n t}\right\|_{L^1(0,T)}.
\end{align}
Moreover, $C(T)$ is uniformly bounded away from $T=0$ and blows-up exponentially as $T\downarrow 0^+$.
\end{theorem}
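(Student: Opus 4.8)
The plan is to establish \eqref{muntz} by the moment (duality) method: construct a family biorthogonal to the exponentials $\{e^{-\mu_n t}\}_{n\ge1}$ on $(0,T)$ with sharp norm bounds, use it to recover the coefficients $c_n$ from $f(t):=\sum_{n\ge1}c_n e^{-\mu_n t}$, and sum up. The technical heart, and the step I expect to be the main obstacle, is the construction in Step 1.

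\textbf{Step 1 (biorthogonal family).} I would invoke the classical Fattorini--Russell moment-method construction to obtain a family $\{\theta_m\}_{m\ge1}\subset L^\infty(0,T)$ with
\begin{equation*}
\int_0^T e^{-\mu_n t}\,\theta_m(t)\,dt=\delta_{nm},\qquad n,m\ge1,
\end{equation*}
and satisfying, for every $\varepsilon\in(0,T)$, the bound
\begin{equation*}
\|\theta_m\|_{L^\infty(0,T)}\le C(T,\varepsilon)\,e^{\varepsilon\mu_m},\qquad m\ge1,
\end{equation*}
where $C(T,\varepsilon)$ remains bounded for $T$ in compact subsets of $(0,\infty)$ and blows up at most exponentially in $1/T$ as $T\downarrow0$. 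Here condition (2) is precisely what guarantees that the canonical Weierstrass product $\prod_{n\ge1}(1+z/\mu_n)$ converges to an entire generating function, the uniform gap condition (1) controls the growth of its reciprocal along the contour entering the Cauchy-type formula for $\theta_m$, and the finite interval length $T$ enters only through a time-localization (band-limiting) factor responsible for the stated dependence of $C(T,\varepsilon)$ on $T$. This step is delicate because it is where the complex-analytic estimates sit and where the precise behaviour of the constant as $T\downarrow0$ must be tracked; one can alternatively work with $L^2(0,T)$-biorthogonal families and then pass from $\|f\|_{L^2(0,T)}$ to $\|f\|_{L^1(0,T)}$ by a Nikolskii-type reverse inequality for Müntz exponential sums, but the $L^\infty$ route feeds directly into Step 2.

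\textbf{Step 2 (summation).} For a finite sum $f(t)=\sum_{n=1}^N c_n e^{-\mu_n t}$, biorthogonality yields $c_m=\int_0^T f(t)\theta_m(t)\,dt$ for all $m$ (with $c_m=0$ for $m>N$), hence $|c_m|\le\|\theta_m\|_{L^\infty(0,T)}\|f\|_{L^1(0,T)}$ and
\begin{equation*}
\sum_{n=1}^N|c_n|e^{-\mu_n T}\le\Bigl(\sum_{m\ge1}\|\theta_m\|_{L^\infty(0,T)}e^{-\mu_m T}\Bigr)\|f\|_{L^1(0,T)}.
\end{equation*}
Choosing $\varepsilon=T/2$ in Step 1 and using $\mu_m\ge\mu_1+(m-1)\gamma$ from condition (1),
\begin{equation*}
\sum_{m\ge1}\|\theta_m\|_{L^\infty(0,T)}e^{-\mu_m T}\le C(T,T/2)\sum_{m\ge1}e^{-\mu_m T/2}\le C(T,T/2)\,\frac{e^{-\mu_1 T/2}}{1-e^{-\gamma T/2}}=:C(T)<\infty,
\end{equation*}
uniformly in $N$. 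Since $(1-e^{-\gamma T/2})^{-1}$ is bounded for $T$ bounded away from $0$ and grows only like $T^{-1}$ as $T\downarrow0$, the constant $C(T)$ inherits from $C(T,T/2)$ exactly the asserted behaviour: bounded away from $T=0$ and with at most exponential blow-up as $T\downarrow0$.

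\textbf{Step 3 (general sequences).} Finally, for an arbitrary sequence $(c_n)$ for which the right-hand side of \eqref{muntz} is finite, I would apply Step 2 to the partial sums $f_N:=\sum_{n=1}^N c_n e^{-\mu_n t}$ and let $N\to\infty$: monotone convergence on the left together with $f_N\to f$ in $L^1(0,T)$ (which is how the exponential series on the right of \eqref{muntz} is to be understood, and is in any case the situation in which the estimate is applied later in the paper) yields \eqref{muntz} with the same constant $C(T)$, completing the proof.
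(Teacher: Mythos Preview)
The paper does not supply its own proof of this theorem: it is quoted verbatim from \cite[Theorem 2.4]{biccari2019controllability} and used as a black box. So there is no ``paper's proof'' to compare against here.

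That said, your approach via a biorthogonal family in $L^\infty(0,T)$ is the standard and correct route to such Müntz--type $L^1$ estimates, and is in fact the method underlying the cited reference (which in turn rests on the classical work of Schwartz \cite{schwartz1943etude} and the Fattorini--Russell moment method). Your Step~1 correctly identifies the role of hypotheses (1) and (2): the summability condition guarantees convergence of the Weierstrass product and hence existence of the biorthogonal family, while the gap condition controls the growth of $\|\theta_m\|_{L^\infty}$ and, crucially, makes the geometric sum $\sum_m e^{-\mu_m T/2}$ in Step~2 finite. The choice $\varepsilon=T/2$ and the resulting expression for $C(T)$ give exactly the claimed behaviour of the constant. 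Step~3 is fine with the caveat you already flag: the inequality is only meaningful, and only used in the paper, for sequences $(c_n)$ for which the exponential series converges in $L^1(0,T)$.

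One minor remark: in Step~2 you write the bound with the full sum $\sum_{m\ge1}$ on the right even though only finitely many $c_m$ are nonzero; this is harmless since you are bounding by a convergent series independent of $N$, which is precisely what is needed to pass to the limit in Step~3.
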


The second auxiliary and technical result we shall need, is adapted from the results contained in \cite{WaZa1}. In fact, by \cite{WaZa1}, $\|\mathcal N_s\varphi_n\|_{L^2(\mathcal O)}$ is uniform bounded from below, where $\mathcal{O}\subset(\Omc)$ is an arbitrary open. In the settings of the present paper, we shall need a similar estimate but for the  $L^1$-norm.

\begin{lemma}\label{uni-bound}
Let $\frac 12<s<1$. Then, for every nonempty open set $\mathcal{O}\subset (\Omc)$, there exists a constant $\eta>0$ such that for every $k\in\N$,
$\mathcal{N}_s\varphi_k$ is uniformly bounded from below by $\eta$ in $L^1(\mathcal{O})$. Namely,
\begin{align}\label{eq39}
\exists\; \eta>0,\; \forall \;k\in\N,\; \|\mathcal{N}_s\varphi_k\|_{L^1(\mathcal{O})}\geq \eta.
\end{align}
\end{lemma}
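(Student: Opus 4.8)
The plan is to reduce the lower bound on $\|\mathcal{N}_s\varphi_k\|_{L^1(\mathcal{O})}$ to the already-known uniform lower bound on $\|\mathcal{N}_s\varphi_k\|_{L^2(\mathcal{O})}$ from \cite{WaZa1}, by controlling the $L^2$-norm from above by the $L^\infty$-norm times a constant, and then comparing $L^\infty$ and $L^1$ on a fixed set. More precisely, first I would fix a nonempty open set $\mathcal{O}'$ with $\overline{\mathcal{O}'}\subset\mathcal{O}$ compact and $\overline{\mathcal{O}'}\subset\Omc$, so that on $\mathcal{O}'$ the kernel $|x-y|^{-1-2s}$ for $x\in\mathcal{O}'$, $y\in(-1,1)$ stays bounded (there is a positive distance between $\overline{\mathcal{O}'}$ and $[-1,1]$). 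Since $\varphi_k=0$ outside $(-1,1)$, for $x\in\mathcal{O}'$ we have
\begin{align*}
\mathcal{N}_s\varphi_k(x)=-C_s\int_{-1}^1\frac{\varphi_k(y)}{|x-y|^{1+2s}}\,dy,
\end{align*}
and hence $\|\mathcal{N}_s\varphi_k\|_{L^\infty(\mathcal{O}')}\le C\|\varphi_k\|_{L^1(-1,1)}\le C'\|\varphi_k\|_{L^2(-1,1)}=C'$, where $C,C'$ depend only on $s$, $\mathcal{O}'$ and $|\mathcal{O}|$ but not on $k$, using that $(\varphi_k)$ is orthonormal in $L^2(-1,1)$.

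Next I would use this uniform $L^\infty$ bound on the slightly larger set. Actually, to get a lower bound on $L^1(\mathcal{O})$ it is cleanest to argue: by \cite{WaZa1} there is $\eta_2>0$, independent of $k$, with $\|\mathcal{N}_s\varphi_k\|_{L^2(\mathcal{O})}\ge\eta_2$. On the other hand, the same kernel estimate as above (now with $\mathcal{O}$ in place of $\mathcal{O}'$, still away from $[-1,1]$ since $\overline{\mathcal{O}}\subset\R\setminus[-1,1]$ — or, if $\mathcal{O}$ is only assumed open and bounded, restrict attention to a compactly contained open subset and note the lower bound only improves) gives $\|\mathcal{N}_s\varphi_k\|_{L^\infty(\mathcal{O})}\le M$ with $M$ independent of $k$. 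Then
\begin{align*}
\eta_2^2\le\|\mathcal{N}_s\varphi_k\|_{L^2(\mathcal{O})}^2=\int_{\mathcal{O}}|\mathcal{N}_s\varphi_k|^2\,dx\le\|\mathcal{N}_s\varphi_k\|_{L^\infty(\mathcal{O})}\int_{\mathcal{O}}|\mathcal{N}_s\varphi_k|\,dx\le M\|\mathcal{N}_s\varphi_k\|_{L^1(\mathcal{O})},
\end{align*}
so $\|\mathcal{N}_s\varphi_k\|_{L^1(\mathcal{O})}\ge\eta_2^2/M=:\eta>0$, uniformly in $k$, which is exactly \eqref{eq39}.

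The main obstacle, and the point requiring care, is the uniform-in-$k$ $L^\infty$ bound on $\mathcal{N}_s\varphi_k$ over $\mathcal{O}$: it relies on $\mathcal{O}$ (or the subset one works with) having positive distance from $[-1,1]$, which makes the singular kernel harmless, and on the uniform $L^2(-1,1)$-normalization of the eigenfunctions plus the embedding $L^2(-1,1)\hookrightarrow L^1(-1,1)$ on the bounded interval. One should double-check that the hypotheses under which the lemma is invoked guarantee $\dist(\mathcal{O},[-1,1])>0$, or otherwise pass to a compactly contained open subset $\mathcal{O}'\Subset\mathcal{O}\cap(\Omc)$ with positive distance and use monotonicity of the $L^1$-norm in the domain, $\|\mathcal{N}_s\varphi_k\|_{L^1(\mathcal{O})}\ge\|\mathcal{N}_s\varphi_k\|_{L^1(\mathcal{O}')}$, running the entire argument on $\mathcal{O}'$. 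Everything else is elementary, and in particular no new continuation or compactness argument is needed beyond what \cite{WaZa1} already supplies for the $L^2$ bound.
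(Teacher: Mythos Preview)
Your argument is correct and takes a genuinely different, more direct route than the paper's proof.

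The paper argues by contradiction: assuming a subsequence $(\varphi_{k_n})$ with $\|\mathcal N_s\varphi_{k_n}\|_{L^1(\mathcal O)}\to 0$, it uses the Sobolev embedding $\widetilde H_0^s(\mathcal O)\hookrightarrow L^\infty(\mathcal O)$ (valid for $s>\tfrac12$) to deduce $\mathcal N_s\varphi_{k_n}\to 0$ in $\widetilde H^{-s}(\mathcal O)$, and then repeats essentially the same compactness/unique-continuation machinery (Kwa\'snicki's approximate eigenfunctions, the compact operator $L:\widetilde H_0^s(-1,1)\to\widetilde H^{-s}(\mathcal O)$, and Lemma~\ref{lem-UCD}) that underlies the $L^2$ lower bound in \cite{WaZa1}, to reach a contradiction.

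You instead treat the $L^2$ lower bound from \cite{WaZa1} as a black box and close the gap with the elementary interpolation $\|f\|_{L^2}^2\le\|f\|_{L^\infty}\|f\|_{L^1}$, the missing ingredient being a uniform-in-$k$ bound $\|\mathcal N_s\varphi_k\|_{L^\infty(\mathcal O')}\le M$. That bound follows immediately from the kernel representation and $\|\varphi_k\|_{L^2(-1,1)}=1$, once $\mathcal O'$ has positive distance from $[-1,1]$; passing to such a compactly contained $\mathcal O'\Subset\mathcal O$ and using $\|\cdot\|_{L^1(\mathcal O)}\ge\|\cdot\|_{L^1(\mathcal O')}$ together with the $L^2$ lower bound of \cite{WaZa1} applied to $\mathcal O'$ finishes the proof. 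This avoids re-running any spectral approximation or unique-continuation argument and makes no essential use of the hypothesis $s>\tfrac12$ beyond what is already encoded in the cited $L^2$ bound. The paper's approach, by contrast, is self-contained at the level of $L^1$ (modulo the same external inputs from \cite{kwasnicki2012eigenvalues} and \cite{GRSU}) and in particular explains \emph{why} the bound holds rather than reducing to a prior result; but your argument is shorter and perfectly adequate for the purpose at hand.
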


\begin{proof}
For brevity we present here only the main ideas of the proof.  Let $\frac 12<s<1$.\\

{\bf Step 1}:
Since $\varphi_k=0$  in $\Omc$ for every $k\in\NN$, it follows from the definition of $(-\partial_x^2)^s$ and $\mathcal N_s$ that for almost every $x\in\mathcal O\subseteq(\Omc)$, we have 
\begin{align}\label{eq-eg}
(-\partial_x^2)^s\varphi_k(x)=C_{s}\mbox{P.V.}\int_{\RR}\frac{\varphi_k(x)-\varphi_k(y)}{|x-y|^{1+2s}}\;dy=C_{s}\int_{-1}^1\frac{\varphi_k(x)-\varphi_k(y)}{|x-y|^{1+2s}}\;dy=
\mathcal N_s\varphi_k(x).
\end{align}
We have shown that $(\mathcal N_s \varphi_k)|_{\mathcal O}=((-\partial_x^2)^s\varphi_k)|_{\mathcal O}$ for every $k\in\NN$. 

It follows from \cite[Lemma 1]{kwasnicki2012eigenvalues} that $(\varphi_k)_{k\geq1}$ can be approximated by a suitable sequence $(\varrho_k)_{k\in\NN}\subset D((-\partial_x^2)_D^s)$, and  there is a constant $C>0$ (independent of $k$) such that
\begin{align*}
\left| \fl{s}{\varrho_k}(x)-\mu_k^{2s}\varrho_k(x)\right|\leq \frac{C(1-s)}{\sqrt{2s}}\mu_k^{-1},\;\;\textrm{ for all }\;\; x\in (-1,1),\;k\geq 1,
\end{align*}
where 
\begin{align*} \label{mu_k_def}
\mu_k:=\frac{k\pi}{2}-\frac{(1-s)\pi}{4},\quad k\geq 1.
\end{align*}

Furthermore, by \cite[Proposition 1]{kwasnicki2012eigenvalues}, there is a constant $C>0$ such that for every $k\ge 1$, we have
\begin{equation*}
\|\varrho_k-\varphi_k\|_{L^2(-1,1)}\le \frac{C(1-s)}{k}.
\end{equation*}

{\bf Step 2:}
Now, let $\mathcal{O}\subset \Omc$ be an arbitrary nonempty open  set and assume that for every $\eta>0$ there exists $k\in\N$ such that 
\begin{align}\label{contr}
\|\mathcal{N}_s\varphi_k\|_{L^1(\mathcal{O})}<\eta.
\end{align}
It follows from \eqref{contr} that there is a subsequence $(\varphi_{k_n})_{n\in\NN}$ such that
\begin{align}\label{contr-2}
\|\mathcal{N}_s\varphi_{k_n}\|_{L^1(\mathcal{O})}<\frac 1n,
\end{align}
for $n$ large enough. Since $\frac 12<s<1$, it follows from \eqref{sob-imb-2} that $\widetilde H_0^s(\mathcal{O})\hookrightarrow L^{\infty}(\mathcal{O})$. Thus, $L^1(\mathcal{O})\hookrightarrow (L^{\infty}(\mathcal{O}))^{\star}\hookrightarrow  \widetilde H^{-s}(\mathcal{O})$ and we can deduce from \eqref{contr-2} that there is a constant $C>0$ such that for $n$ large enough, we have
\begin{align}\label{contr-3}
\|\mathcal{N}_s\varphi_{k_n}\|_{\widetilde H^{-s}(\mathcal{O})}\le \frac Cn.
\end{align}

{\bf Step 3}: Using the triangle inequality, we get that there is a constant $C>0$ such that
\begin{align}\label{nnn}
\|\varrho_{k_n}-\varphi_{k_n}\|_{\widetilde H_0^s(\Om)}^2\leq &C\|(-\partial_x^2)^s\varrho_{k_n}-(-\partial_x^2)^s\varphi_{k_n}\|_{L^2(-1,1)}^2\notag\\
\leq &C \Big(\|(-\partial_x^2)^s\varrho_{k_n}-\mu_{k_n}^{2s}\varrho_{k_n}\|_{L^2(-1,1)}^2+\|\varrho_{k_n}(\mu_{k_n}^{2s}-\lambda_{k_n})\|_{L^2(-1,1)}^2\notag \\
&+\|\lambda_{k_n}\varrho_{k_n}-(-\partial_x^2)^s\varphi_{k_n}\|_{L^2(-1,1)}^2\Big).
\end{align}

Using \eqref{nnn} and  Step 1, we have that there is a constant $C_{k_n}(s)>0$ which converges to zero as $n\to\infty$, such that 
\begin{align*} 
\|\varrho_{k_n}-\varphi_{k_n}\|_{\widetilde H_0^s(\Om)}^2 \leq C_{k_n}(s).
\end{align*}
Let the operator $L$ be defined by 
$$L:  \widetilde H_0^{s}(\Om)\to \widetilde H^{-s}(\mathcal O),\; v\mapsto Lv:=((-\partial_x^2)^sv)|_{\mathcal O}=(\mathcal N_sv)|_{\mathcal O}.$$

By \cite[Lemma 2.2]{GRSU},  the operator $L$ is compact, injective with dense range. Let $B_1:=\overline{B}\left(\varrho_{k_n},C_{k_n}(s)\right)$ be the closed ball in $\widetilde H_0^s(\Om)$ with center in $\varrho_{k_n}$ and radius $C_{k_n}(s)$. Since $L$ is a compact operator, we have that the image of $B_1$, namely $L(B_1)$, is totally bounded in $\widetilde H^{-s}(\mathcal{O})$. Therefore, for every $\varepsilon>0$ there exists $N\in\N$ and $\{\psi_1,\ldots,\psi_N\}\subseteq B_1$ such that
\begin{align*}
L(B_1)\subseteq \bigcup_{j=1}^{N}\overline{B}_{\widetilde H^{-s}(\mathcal O)}(L(\psi_j),\varepsilon).
\end{align*}
We notice that $\varphi_{k_n}$ belongs to $B_1$. Thus, there exists $j\in\{1,\ldots,N\}$ such that
\begin{align*}
L(\varphi_{k_n})\in \overline{B}_{\widetilde H^{-s}(\mathcal O)}(L(\psi_j),\varepsilon).
\end{align*}
We have shown that for $n$ large enough,
\begin{align*}
\|L(\varphi_{k_n})-L(\psi_j)\|_{\widetilde H^{-s}(\mathcal{O})}\leq \varepsilon.
\end{align*}

Since $\psi_j\in B_1$, firstly we obtain that $\varphi_{k_n}\rightarrow \psi_j$, as $n\to\infty$ in $\widetilde H_0^s(\Om)$  and secondly, we have that $\psi_j$ is an element of the spectrum $\{(\varphi_k,\lambda_k)\}_{k\geq 1}$. That is, $\psi_j$ is a solution of \eqref{ei-val-pro}. Finally, as $L(\varphi_{k_n})$ converges to zero in $\widetilde H^{-s}(\mathcal{O})$ (by \eqref{contr-3}), we can deduce that $L(\psi_j)=\mathcal N_s\psi_j=(-\partial_x^2)^s\psi_j=0$ a.e. in $\mathcal O$. It follows from Lemma \ref{lem-UCD} that $\psi_j=0$  a.e. in $\RR$, which is a contradiction. The proof of is finished.
\end{proof}

Now we can state and prove the  main result of this section.

\begin{theorem}\label{main2}
Let $\mathcal O\subset (\Omc)$ be an arbitrary nonempty open set. Then, for every $u_0\in L^2(\Om)$, $\frac 12<s<1$ and $T>0$, there exists a control function $g\in L^\infty(\mathcal O\times (0,T))\cap L^2((0,T);\widetilde H_0^s(\mathcal O))$ such that the corresponding unique weak solution $u$ of \eqref{eq-main} satisfies $u(x,T)=0$ for a.e. $x\in (-1,1)$. In addition, there is a constant $C=C(T)>0$ such that
\begin{align}\label{jjj}
 \|g\|_{L^\infty(\mathcal O\times (0,T))}\le\|g\|_{L^\infty(\mathcal O\times (0,T))\cap L^2((0,T);\widetilde H_0^s(\mathcal O))}\le C\|u_0\|_{L^2(-1,1)}.
\end{align}
\end{theorem}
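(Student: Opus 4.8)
The plan is to derive the null-controllability of \eqref{eq-main} with $L^\infty$-controls from the $L^1$-observability inequality \eqref{observ}, using the equivalence established in Theorem \ref{main1}. Thus the whole proof reduces to verifying that the dual system \eqref{Dual} is $L^1$-observable in every time $T>0$ whenever $\frac12<s<1$, and then invoking the implication (2) $\Rightarrow$ (1) of Theorem \ref{main1} together with the bound \eqref{bound of control} to obtain \eqref{jjj}.

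To prove \eqref{observ}, I would start from the series representation \eqref{norm-der} of the nonlocal normal derivative: writing $\psi_T=\sum_n \psi_{0,n}\varphi_n$, we have $\mathcal N_s\psi(x,t)=\sum_{n\geq 1}\psi_{0,n}e^{-\lambda_n(T-t)}\mathcal N_s\varphi_n(x)$. Integrating $|\mathcal N_s\psi(x,t)|$ over $\mathcal O\times(0,T)$ and using the fact (from \cite{SV2}) that for $\frac12\le s<1$ the eigenvalues $\lambda_n$ are \emph{simple}, so that each coefficient $e^{-\lambda_n(T-t)}$ attaches to a single eigenfunction, I would try to bound $\|\mathcal N_s\psi\|_{L^1(\mathcal O\times(0,T))}$ from below. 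The key idea is to first fix $x\in\mathcal O$ and apply the Ingham-type inequality of Theorem \ref{muntz1} to the scalar function $t\mapsto \sum_n \big(\psi_{0,n}\mathcal N_s\varphi_n(x)\big)e^{-\lambda_n(T-t)}$ (after the change of variable $\tau=T-t$), which requires checking that $(\lambda_n)$ satisfies a uniform gap condition $\lambda_{n+1}-\lambda_n\ge\gamma>0$ and the summability $\sum 1/\lambda_n<\infty$ — both of which follow for $\frac12<s<1$ from the known asymptotics $\lambda_n\sim(n\pi/2 - (1-s)\pi/4)^{2s}$ coming from \cite{kwasnicki2012eigenvalues} (indeed $\sum n^{-2s}<\infty$ iff $s>\tfrac12$, which is precisely where the restriction $s>\tfrac12$ enters). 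This yields, for a.e.\ $x\in\mathcal O$,
\begin{align*}
\sum_{n\geq 1}|\psi_{0,n}|\,|\mathcal N_s\varphi_n(x)|\,e^{-\lambda_n T}\leq C(T)\int_0^T\Big|\sum_{n\geq 1}\psi_{0,n}\mathcal N_s\varphi_n(x)e^{-\lambda_n(T-t)}\Big|\,dt.
\end{align*}
Integrating this over $x\in\mathcal O$ and using Lemma \ref{uni-bound}, which gives $\|\mathcal N_s\varphi_n\|_{L^1(\mathcal O)}\ge\eta>0$ uniformly in $n$, I would bound the left-hand side below by $\eta\,\sup_n\big(|\psi_{0,n}|e^{-\lambda_nT}\big)$, or more carefully produce a lower bound on $\sum_n|\psi_{0,n}|^2 e^{-2\lambda_n T}$; combined with $\|\psi(\cdot,0)\|_{L^2(-1,1)}^2=\sum_n|\psi_{0,n}|^2 e^{-2\lambda_n T}$ this delivers \eqref{observ}.

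The main obstacle — and the place where care is needed — is the interchange of the $x$-integration with the Ingham estimate: Theorem \ref{muntz1} is applied pointwise in $x$ with coefficients $c_n(x)=\psi_{0,n}\mathcal N_s\varphi_n(x)$, and one must control $\int_{\mathcal O}\sum_n|\psi_{0,n}|\,|\mathcal N_s\varphi_n(x)|e^{-\lambda_n T}\,dx$ from below by something comparable to $\|\psi(\cdot,0)\|_{L^2(-1,1)}$. A naive application only lower-bounds the $\ell^1$-type quantity $\sum_n|\psi_{0,n}|e^{-\lambda_nT}\|\mathcal N_s\varphi_n\|_{L^1(\mathcal O)}$, and passing from this $\ell^1$ bound back to the $\ell^2$ norm $\|\psi(\cdot,0)\|_{L^2}$ is not free. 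The cleanest route is instead to fix a single index, say $n=1$ (the first eigenvalue), or to run the Ingham inequality simultaneously for all frequencies but test against $\psi_T$ chosen so that only finitely many coefficients are active, and then argue by density; alternatively one keeps the full series but exploits that $C(T)$ from Theorem \ref{muntz1} is independent of the number of terms, so one may first prove \eqref{observ} for finite linear combinations $\psi_T=\sum_{n=1}^N\psi_{0,n}\varphi_n$ with constant independent of $N$, and then pass to the limit using the continuity statements in Theorem \ref{theo-48}. I would carry out the argument in this last form, which makes the limiting procedure transparent and confines all the analysis to finite sums where Fubini and term-by-term manipulations are unproblematic. Once \eqref{observ} is in hand, the conclusion of the theorem, including the estimate \eqref{jjj} with $C=C(T)$ inherited from the observability constant, is immediate from Theorem \ref{main1}.
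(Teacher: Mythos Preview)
Your strategy is exactly the one the paper uses: reduce to the $L^1$-observability inequality via Theorem \ref{main1}, expand $\mathcal N_s\psi$ in the eigenfunction series, apply the Ingham-type estimate of Theorem \ref{muntz1} pointwise in $x\in\mathcal O$ (the hypotheses being verified through the Kwa\'snicki asymptotics \eqref{lam}, which is precisely where $s>\tfrac12$ enters), integrate over $\mathcal O$, and invoke the uniform lower bound of Lemma \ref{uni-bound}.

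The one place where you diverge from the paper is the perceived ``main obstacle'' of passing from the $\ell^1$-type quantity $\sum_n|\psi_{0,n}|e^{-\lambda_nT}$ to the $\ell^2$-norm $\|\psi(\cdot,0)\|_{L^2(-1,1)}^2=\sum_n|\psi_{0,n}|^2e^{-2\lambda_nT}$. You describe this as ``not free'' and propose a density argument via finite sums. In fact the direction needed here is the trivial one: for any nonnegative sequence,
\[
\sum_{n\ge1}|\psi_{0,n}|^2e^{-2\lambda_nT}\;\le\;\Big(\sum_{n\ge1}|\psi_{0,n}|e^{-\lambda_nT}\Big)^{2},
\]
so once you have $\eta\sum_n|\psi_{0,n}|e^{-\lambda_nT}\le C(T)\|\mathcal N_s\psi\|_{L^1(\mathcal O\times(0,T))}$, squaring gives \eqref{observ} immediately. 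This is exactly how the paper closes the argument, in one line; no finite-sum truncation or limiting procedure is required. Your more elaborate route would also work, but it is unnecessary.
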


\begin{proof}
Recall that by Theorem \ref{main1}, the null controllability of \eqref{eq-main} together with \eqref{jjj}, is equivalent to the $L^1$-observability inequality \eqref{observ}. Therefore, we shall prove that \eqref{observ} holds.

Let $T>0$, $\psi_T\in L^2(-1,1)$ and let $\psi\in C([0,T];L^2(-1,1))$ be the associated unique weak solution of \eqref{Dual}.
It follows from Theorem \ref{theo-48} that
\begin{align*} 
\psi(x,t)=\sum_{n=1}^{\infty}\psi_{0,n}e^{-\lambda_n(T-t)}\varphi_n(x)\;\;\;\mbox{ and }\;\;\;
\mathcal{N}_s\psi(x,t)=\sum_{n=1}^{\infty}\psi_{0,n}e^{-\lambda_n(T-t)}\mathcal{N}_s\varphi_n(x),
\end{align*}
where we recall that $\psi_{0,n}:=(\psi_T,\varphi_n)_{L^2(-1,1)}$.
Using the fact that $(\varphi_n)_{n\ge 1}$ is an orthonormal basis in $L^2(-1,1)$, we have that the $L^1$-observability inequality \eqref{observ} becomes
\begin{align}\label{OI1}
\sum_{n=1}^{\infty}|\psi_{0,n}|^2 e^{-2\lambda_n T}\leq C(T)\left(\int_0^T\int_{\mathcal{O}}\left|\sum_{n=1}^{\infty}\psi_{0,n}e^{-\lambda_n (T-t)} \mathcal{N}_s\varphi_n(x)\right|dxdt\right)^2.
\end{align}
Using the change of variable $T-t\mapsto t$, we get from \eqref{OI1} that
\begin{align}\label{obs-ine-1}
\sum_{n=1}^{\infty}|\psi_{0,n}|^2 e^{-2\lambda_n T}\leq C(T)\left(\int_0^T\int_{\mathcal{O}}\left|\sum_{n=1}^{\infty}\psi_{0,n}e^{-\lambda_n t} \mathcal{N}_s\varphi_n(x)\right|dxdt\right)^2.
\end{align}

We observe that $(\lambda_n)_{n\in\N}$ are simple (since we have assume that $\frac 12<s<1$) and the following asymptotics hold (see e.g. \cite{kwasnicki2012eigenvalues}):
\begin{align}\label{lam}
\lambda_n=\left(\frac{n\pi}{2}-\frac{(2-2s)\pi}{8}\right)^{2s}+O\left(\frac{1}{n}\right)\;\text{ as }\, n\to\infty.
\end{align}

Therefore, letting $\mu_n:=\lambda_n$ we have that the conditions (1) and (2) in Theorem \ref{muntz1} are both satisfied. Thus, we can deduce that \eqref{muntz} holds with $c_n$ replaced with $\psi_{0,n}$. 

Now, by \cite[Section 8, page 28, Equation (8.i)]{schwartz1943etude} and \cite[Section 9, page 33, Theorem I]{schwartz1943etude}, we have that for almost every fixed $x\in\mathcal O$, there exists a constant $C(T)>0$ which is uniformly bounded away from $T=0$, such that
\begin{align}\label{new}
\sum_{n=1}^{\infty}|\psi_{0,n}\mathcal{N}_s\varphi_n(x)|e^{-\lambda_n T}\leq C(T)\int_0^T\left|\sum_{n=1}^{\infty}\psi_{0,n}\mathcal{N}_s\varphi_n(x)e^{-\lambda_{n} t}\right|dt.
\end{align}

By Lemma \ref{uni-bound},  $\|\mathcal{N}_s\varphi_n\|_{L^1(\mathcal{O})}\ge \eta>0$. Thus, integrating \eqref{new} over $\mathcal{O}$ and using \eqref{eq39} we can deduce that 
\begin{align}\label{mw1}
 \eta \sum_{n=1}^{\infty}|\psi_{0,n}|e^{-\lambda_n T}\leq C(T)\int_{\mathcal{O}}\int_0^T\left|\sum_{n=1}^{\infty}\psi_{0,n}\mathcal{N}_s\varphi_n(x)e^{-\lambda_{n} t}\right|\;dtdx.
\end{align}
Since
\begin{align*}
\sum_{n=1}^{\infty}|\psi_{0,n}|^2e^{-2\lambda_n T}\leq \left(\sum_{n=1}^{\infty}|\psi_{0,n}|e^{-\lambda_n T}\right)^2,
\end{align*}
it follows from \eqref{mw1} that
\begin{multline*}
\eta^2\sum_{n=1}^{\infty}|\psi_{0,n}|^2e^{-2\lambda_n T}  \leq\eta^2 \left(\sum_{n=1}^{\infty}|\psi_{0,n}|e^{-\lambda_n T}\right)^2\\\leq C(T)^2\left(\int_{\mathcal{O}}\int_0^T\left|\sum_{n=1}^{\infty}\psi_{0,n}\mathcal{N}_s\varphi_n(x)e^{-\lambda_{n} t}\right|\;dtdx\right)^2.
\end{multline*}
Finally, using Fubini's theorem we get that
\begin{align*}
\sum_{n=1}^{\infty}|\psi_{0,n}|^2e^{-2\lambda_n T}\leq \frac{C(T)^2}{\eta^2}\left(\int_0^T\int_{\mathcal{O}}\left|\sum_{n=1}^{\infty}\psi_{0,n}e^{-\lambda_n t} \mathcal{N}_s\varphi_n(x)\right|dxdt\right)^2.
\end{align*}
We have shown that the $L^1$-observability inequality \eqref{observ} holds. The proof is finished.
\end{proof}

We conclude this section with the following observation.

\begin{remark}
{\em We mention the following facts.
\begin{enumerate}
\item We observe that since the constant $C(T)$ in \eqref{muntz} blows up exponentially as $T\downarrow 0^+$, we have that the constant in the $L^1$-obervability inequality \eqref{observ} also blows up exponentially as $T\downarrow 0^+$. This is consistent with the classical local case $s=1$, where the same phenomena occurs.
\item We mention that in this section we do not need the assumption that the control region $\mathcal O$ is bounded. This is due to the fact that we did not impose any constraints on the control function.
\item If $0<s\le \frac 12$, then the eigenvalues $(\lambda_n)_{n\ge 1}$ do not satisfy the conditions (1) and (2) in Theorem \ref{muntz1}. Thus, in this case, the null-controllability result in Theorem \ref{main2} does not hold.
\end{enumerate}
}
\end{remark}

\section{Proofs of the main results}\label{prof-ma-re}

In this section we give the proofs of the main results stated in Section \ref{main-results}.

\begin{proof}[\bf Proof of Theorem \ref{main3}]
Due to the linearity of \eqref{eq-main}, and considering $z:=u-\widehat{u}$ a solution of 
\begin{align}\label{eq-main1}
\begin{cases}
\partial_t z + (-\partial_x^2)^{s} z = 0 & \mbox{ in }\; (-1,1)\times(0,T),\\
z=h\chi_{\mathcal{O}\times(0,T)} &\mbox{ in }\; (\Omc)\times (0,T), \\
z(\cdot,0) = u_0-\widehat{u}_0&\mbox{ in }\; (-1,1),
\end{cases}
\end{align}
with $h:=g-\widehat{g}$, it is enough to prove that there exist $T>0$ and a control $h\in L^{\infty}(\mathcal{O}\times (0,T))\cap L^2((0,T);\widetilde H_0^s(\mathcal{O}))$ fulfilling $h\ge -\alpha$ a.e. in $\mathcal O\times(0,T)$ such that $z(\cdot,T)=0$ a.e. in $(-1,1)$. 

By Theorem \ref{main2}, the null controllability of \eqref{eq-main1} with $h\in L^{\infty}(\mathcal{O}\times (0,T))\cap L^2((0,T);\widetilde H_0^s(\mathcal{O}))$ is equivalent to \eqref{observ}. We observe that the $L^1$-observability inequality \eqref{observ} is independent of the time interval. For that reason we can also consider the interval $(t_0,T)$, for $t_0\in(0,T)$. Therefore, the $L^1$-observability inequality \eqref{observ} becomes
\begin{align}\label{observ1}
\|\psi(\cdot,0)\|_{L^2(-1,1)}^2\leq C(T-t_0)\left(\int_{t_0}^T\int_{\mathcal{O}}|\mathcal{N}_s\psi(x,t)|dxdt \right)^2.
\end{align}
It follows from \eqref{eq-25} that
\begin{align}\label{eq43}
\|\psi(\cdot,0)\|_{L^2(-1,1)}^2&\leq \sum_{n=1}^{\infty}|\psi_{0,n}|^2e^{-2\lambda_n T}|\varphi_n(x)|^2\nonumber \\
&=\sum_{n=1}^{\infty}|\psi_{0,n}|^2e^{-2\lambda_n (T-t_0)}e^{-2\lambda_n t_0}|\varphi_n(x)|^2,
\end{align}
where $\psi_{n,0}:=(\psi_T,\varphi_n)_{L^2(-1,1)}$.
Since $0<\lambda_1\leq\lambda_2\leq\ldots\leq\lambda_n\leq\ldots$, it follows from \eqref{eq43} that
\begin{align}\label{t1}
\|\psi(\cdot,0)\|_{L^2(-1,1)}^2&\leq e^{-2\lambda_1 t_0}\sum_{n=1}^{\infty}|\psi_{0,n}|^2e^{-2\lambda_n (T-t_0)}|\varphi_n(x)|^2=e^{-2\lambda_1 t_0}\|\psi(\cdot,t_0)\|_{L^2(-1,1)}^2.
\end{align}
Substituting \eqref{t1} into \eqref{observ1} we get that
\begin{align}\label{observ2}
\|\psi(\cdot,0)\|_{L^2(-1,1)}^2\leq e^{-2\lambda_1 t_0}C(T-t_0)\left(\int_{t_0}^T\int_{\mathcal{O}}|\mathcal{N}_s\psi(x,t)|dxdt \right)^2.
\end{align}

By Theorem \ref{main1}, \eqref{observ2} is equivalent to the existence of $h\in L^{\infty}(\mathcal{O}\times(0,T))\cap L^2((0,T);\widetilde H_0^s(\mathcal{O}))$ such that
\begin{multline}\label{eq46}
\|h\|_{L^{\infty}(\mathcal{O}\times(0,T))}^2\leq\|h\|_{L^{\infty}(\mathcal{O}\times(0,T))\cap L^2((0,T);\widetilde H_0^s(\mathcal{O}))}\\\leq e^{-2\lambda_1 t_0}C(T-t_0)\|u_0-\widehat{u}_0\|_{L^2(-1,1)}^2.
\end{multline}

Taking $t_0:=\frac{T}{2}$ and using the fact that the $L^1$-observability constant $C(T)$ is uniformly bounded away from $T=0$, we can deduce from \eqref{eq46} that for $T$ large enough, 
\begin{align}\label{eq47}
\|h\|_{L^{\infty}(\mathcal{O}\times(0,T))}^2\leq \alpha^2.
\end{align}

The estimate \eqref{eq47} implies that $h\ge -\alpha$ a.e. in $\mathcal O\times (0,T)$. We have constructed an exterior control $h\in L^{\infty}(\mathcal{O}\times(0,T))\cap L^2((0,T);\widetilde  H_0^s(\mathcal{O}))$ fulfilling the constraint $h\ge -\alpha$ a.e. in $\mathcal O\times(0,T)$, and is such that the solution $z$ of \eqref{eq-main1} satisfies $z(\cdot,T)=0$ a.e. in $(-1,1)$ for $T$ large enough. 
If $u_0\geq 0$, then from Theorem \ref{theo-28}, we have that $u\geq 0$ a.e. in $(-1,1)\times(0,T)$. The proof is finished.
\end{proof}

\begin{remark}
{\em For the controllabilty to trajectories result in Theorem \ref{main3} to hold, the control time $T$ must be large enough. This is due to the positivity constraints imposed on the control function. 
}
\end{remark}

\begin{proof}[\bf Proof of Theorem \ref{main4}]
Recall that by \eqref{4} the weak solution $u$ of \eqref{eq-main} is given by
\begin{align}\label{y1}
u(x,t)=\sum_{n=1}^{\infty}u_{0,n}e^{-\lambda_n t}\varphi_n(x)+\sum_{n=1}^{\infty}\left(\int_0^t (g(\cdot,\tau), \mathcal{N}_s\varphi_n)_{L^2(\mathcal{O})}  e^{-\lambda_n(t-\tau)}d\tau\right)\varphi_n(x).
\end{align}
Letting $u_n(t):=(u(\cdot,t),\varphi_n)_{L^2(-,1,1)}$, we get that
\begin{align}\label{y2}
u_n(t)=u_{0,n}e^{-\lambda_n t}+\int_0^t (g(\cdot,\tau), \mathcal{N}_s\varphi_n)_{L^2(\mathcal{O})}  e^{-\lambda_n(t-\tau)}d\tau.
\end{align}
Since $u(\cdot,T)=\widehat{u}(\cdot,T)$ a.e. in $(-1,1)$, it follows that
\begin{align}\label{y3}
u_n(T)=(\widehat{u}(\cdot,T),\varphi_n)_{L^2(-1,1)}=:z_n.
\end{align}
Substituting \eqref{y3} into \eqref{y2} we get that
\begin{align}\label{y4}
z_n-u_{0,n}e^{-\lambda_n T}=\int_0^T (g(\cdot,\tau), \mathcal{N}_s\varphi_n)_{L^2(\mathcal{O})}  e^{-\lambda_n(T-\tau)}d\tau.
\end{align}
We notice that
\begin{align*}
(g(\cdot,\tau), \mathcal{N}_s\varphi_n)_{L^2(\mathcal{O})}=(g(\cdot,\tau), [\mathcal{N}_s\varphi_n]^+)_{L^2(\mathcal{O})}-(g(\cdot,\tau), [\mathcal{N}_s\varphi_n]^-)_{L^2(\mathcal{O})},
\end{align*}
where for $v\in L^2(\mathcal O)$, we have set $v^+:=\sup\{v,0\}$ and $v^-:=\sup\{-v,0\}$.
Since
\begin{align*}
e^{-\lambda_nT}\leq e^{-\lambda_n(T-t_0)}\leq 1,\;\forall\;t_0\in[0,T]
\end{align*}
and $g(\cdot,\tau)\geq0$ a.e. in $\mathcal O$, we have that
\begin{align}\label{y5}
e^{-\lambda_nT}\int_0^T (g(\cdot,\tau), [\mathcal{N}_s\varphi_n]^+)_{L^2(\mathcal{O})}d\tau &\leq \int_0^T e^{-\lambda_n(T-\tau)} (g(\cdot,\tau), [\mathcal{N}_s\varphi_n]^+)_{L^2(\mathcal{O})}d\tau\notag\\
&\leq \int_0^T (g(\cdot,\tau), [\mathcal{N}_s\varphi_n]^+)_{L^2(\mathcal{O})}d\tau,
\end{align}
and
\begin{align}\label{y6}
e^{-\lambda_nT}\int_0^T (g(\cdot,\tau), [\mathcal{N}_s\varphi_n]^-)_{L^2(\mathcal{O})}d\tau &\leq \int_0^T e^{-\lambda_n(T-\tau)} (g(\cdot,\tau), [\mathcal{N}_s\varphi_n]^-)_{L^2(\mathcal{O})}d\tau\notag\\
&\leq \int_0^T (g(\cdot,\tau), [\mathcal{N}_s\varphi_n]^-)_{L^2(\mathcal{O})}d\tau.
\end{align}
From \eqref{y4} we have that
\begin{align}\label{yy5}
z_n-u_{0,n}e^{-\lambda_n T}+&\int_0^T  e^{-\lambda_n(T-\tau)} (g(\cdot,\tau), [\mathcal{N}_s\varphi_n]^-)_{L^2(\mathcal{O})}\;d\tau\notag\\
=&\int_0^T  e^{-\lambda_n(T-\tau)} (g(\cdot,\tau), [\mathcal{N}_s\varphi_n]^+)_{L^2(\mathcal{O})}\;d\tau,
\end{align}
and
\begin{align}\label{yy6}
z_n-u_{0,n}e^{-\lambda_n T}-&\int_0^T  e^{-\lambda_n(T-\tau)} (g(\cdot,\tau), [\mathcal{N}_s\varphi_n]^+)_{L^2(\mathcal{O})}\;d\tau\notag\\
=&-\int_0^T  e^{-\lambda_n(T-\tau)} (g(\cdot,\tau), [\mathcal{N}_s\varphi_n]^-)_{L^2(\mathcal{O})}\;d\tau.
\end{align}
Using \eqref{y5} and \eqref{yy5}, we get that
\begin{multline}\label{yyy5}
z_n-u_{0,n}e^{-\lambda_n T}+\int_0^T  e^{-\lambda_n(T-\tau)} (g(\cdot,\tau), [\mathcal{N}_s\varphi_n]^-)_{L^2(\mathcal{O})}\;d\tau\\
\le \int_0^T(g(\cdot,\tau), [\mathcal{N}_s\varphi_n]^+)_{L^2(\mathcal{O})}\;d\tau\\\le z_ne^{-\lambda_nT}-u_{0,n}+ \int_0^T  e^{\lambda_n\tau} (g(\cdot,\tau), [\mathcal{N}_s\varphi_n]^-)_{L^2(\mathcal{O})}\;d\tau.
\end{multline}
From \eqref{y6} and \eqref{yy6} we can deduce that
\begin{multline}\label{yyy6}
z_ne^{\lambda_nT}-u_{0,n}-\int_0^T  e^{\lambda_n\tau} (g(\cdot,\tau), [\mathcal{N}_s\varphi_n]^+)_{L^2(\mathcal{O})}\;d\tau\\
\le -\int_0^T(g(\cdot,\tau), [\mathcal{N}_s\varphi_n]^-)_{L^2(\mathcal{O})}\;d\tau\\\le z_n-u_{0,n}e^{-\lambda_nT}- \int_0^T  e^{\lambda_n\tau} (g(\cdot,\tau), [\mathcal{N}_s\varphi_n]^+)_{L^2(\mathcal{O})}\;d\tau.
\end{multline}

Now assume by contradiction that, for every $T>0$, there exists a non-negative exterior control $g^T$ steering $u_0$ to $\widehat{u}(\cdot,T)$ in time $T$, and that $\widehat{u}(\cdot,T)\neq u_0$ (otherwise the trival thajectory $u\equiv u_0\equiv \widehat{u}$ solves the problem). Then, applying \eqref{yyy5} with $g(\cdot,\tau):=g^T(\cdot,\tau)$ and taking the limit as $T\downarrow 0^+$, we get that
\begin{align}\label{y5-5}
\lim_{T\downarrow 0^+}\int_0^T(g(\cdot,\tau), [\mathcal{N}_s\varphi_n]^+)_{L^2(\mathcal{O})}\;d\tau=z_n-u_{0,n}.
\end{align}

Similarly, applying \eqref{yyy6} with $g(\cdot,\tau):=g^T(\cdot,\tau)$ and taking the limit as $T\downarrow 0^+$, we get that
\begin{align}\label{y6-6}
\lim_{T\downarrow 0^+}-\int_0^T(g(\cdot,\tau), [\mathcal{N}_s\varphi_n]^-)_{L^2(\mathcal{O})}\;d\tau=z_n-u_{0,n}.
\end{align}
It follows from \eqref{y5-5} and \eqref{y6-6} that
\begin{align}
\lim_{T\downarrow 0^+}\int_0^T&(g(\cdot,\tau), \mathcal{N}_s\varphi_n)_{L^2(\mathcal{O})}\;d\tau\notag\\
=& \lim_{T\downarrow 0^+}\int_0^T(g(\cdot,\tau), [\mathcal{N}_s\varphi_n]^+)_{L^2(\mathcal{O})}\;d\tau
-\lim_{T\downarrow 0^+}\int_0^T(g(\cdot,\tau), [\mathcal{N}_s\varphi_n]^-)_{L^2(\mathcal{O})}\;d\tau\notag\\
=&\,2(z_n-u_{0,n})=:\gamma.
\end{align}
Since $u_0\in L^2(\Om)$, we have that
\begin{align*}
\sum_{n=1}^\infty|u_{0,n}|^2=\sum_{n=1}^\infty \left(z_n^2-z_n\gamma+\frac{\gamma^2}{4}\right)<\infty,
\end{align*}
which implies that
\begin{align}\label{mj}
\lim_{n\to\infty}\left(z_n^2-z_n\gamma+\frac{\gamma^2}{4}\right)=0.
\end{align}

Since $(\varphi_n)_{n\ge 1}$ is an orthonormal complete system in $L^2(-1,1)$, we have that $\varphi_n\rightharpoonup 0$ (weak convergence) in $L^2(-1,1)$ as $n\to\infty$. This implies that 
\begin{align*}
\lim_{n\to\infty}z_n=\lim_{n\to\infty}(\widehat{u}(\cdot,T),\varphi_n)_{L^2(-1,1)}=0.
\end{align*}
The above convergence together with \eqref{mj} yield $\gamma=0$. We have then shown that
\begin{align*}
0=2(z_n-u_{0,n})=2\int_{-1}^1\big(\widehat{u}(x,T)-u_0(x)\Big)\varphi_n(x)\;dx,\;\forall\;n\ge 1.
\end{align*}

This is possible if and only if $u_0(x)=\widehat{u}(x,T)$ for a.e. $x\in (-1,1)$, which is a contradiction to our assumption. The proof is finished.
\end{proof}

Before we proceed with the proof of our last main result, we need some preparations. 


\begin{lemma}
Let $\mathcal O\subset\overline{\mathcal O}\subset(\R \setminus[-1,1])$ be an arbitrary nonempty bounded open set. Then, there are two constants $0<C_1\le C_2$ such that for every $x\in\mathcal O$, we have
\begin{align}\label{C1}
C_1\le \int_{-1}^1\frac{dy}{|x-y|^{1+2s}}\le C_2.
\end{align}
\end{lemma}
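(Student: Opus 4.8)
The plan is to prove the two-sided bound \eqref{C1} by elementary estimates on the elementary function $y\mapsto |x-y|^{-1-2s}$ over the fixed interval $(-1,1)$, exploiting that $\mathcal O$ is bounded and has closure disjoint from $[-1,1]$. The crucial geometric fact is that there exist constants $0<d\le D<\infty$ with
\[
d\le \dist(x,[-1,1])\le |x-y|\le D\quad\text{for all }x\in\mathcal O,\ y\in[-1,1].
\]
Indeed, $D$ can be taken to be the diameter of the bounded set $\overline{\mathcal O}\cup[-1,1]$, while the existence of $d>0$ follows from the fact that $\dist(\cdot,[-1,1])$ is a continuous, strictly positive function on the compact set $\overline{\mathcal O}$ (strictly positive because $\overline{\mathcal O}\cap[-1,1]=\emptyset$), hence attains a positive minimum $d:=\min_{x\in\overline{\mathcal O}}\dist(x,[-1,1])>0$.

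Given this, the upper bound in \eqref{C1} is immediate: for every $x\in\mathcal O$ and every $y\in(-1,1)$ one has $|x-y|\ge d$, so $|x-y|^{-1-2s}\le d^{-1-2s}$, and integrating over $(-1,1)$ yields
\[
\int_{-1}^1\frac{dy}{|x-y|^{1+2s}}\le \frac{2}{d^{1+2s}}=:C_2.
\]
Similarly, for the lower bound, $|x-y|\le D$ for all $x\in\mathcal O$, $y\in(-1,1)$ gives $|x-y|^{-1-2s}\ge D^{-1-2s}$, hence
\[
\int_{-1}^1\frac{dy}{|x-y|^{1+2s}}\ge \frac{2}{D^{1+2s}}=:C_1,
\]
and clearly $0<C_1\le C_2$. (Alternatively one may compute the integral explicitly as $\frac{1}{2s}\big[(x+1)^{-2s}-(x-1)^{-2s}\big]$ up to sign depending on the side of $[-1,1]$ on which $x$ lies, and bound the result using $d$ and $D$; but the crude bounds above already suffice.)

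There is essentially no obstacle here; the only point requiring a word of care is the existence of the lower bound $d>0$ on the distance, which is exactly where the hypothesis $\overline{\mathcal O}\subset\R\setminus[-1,1]$ (rather than merely $\mathcal O\subset\R\setminus[-1,1]$) is used — without it, points of $\mathcal O$ could accumulate at $\pm1$, the integral would blow up, and no uniform upper bound $C_2$ would exist. The boundedness of $\mathcal O$ is what furnishes the finite diameter $D$ and hence the strictly positive lower bound $C_1$. Both constants depend only on $s$ and on $\mathcal O$ (through $d$ and $D$), as required.
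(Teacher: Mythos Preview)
Your proof is correct. Both your argument and the paper's rest on the same geometric observation---that $\overline{\mathcal O}$ being compact and disjoint from $[-1,1]$ yields uniform upper and lower bounds on $|x-y|$ for $x\in\mathcal O$, $y\in[-1,1]$. The paper proceeds by computing the integral explicitly as $\frac{1}{2s}\big((|x|-1)^{-2s}-(|x|+1)^{-2s}\big)$ and then invoking its monotonicity in $|x|$ on the interval $[a,b]\ni|x|$; you instead bound the integrand pointwise by $D^{-1-2s}$ below and $d^{-1-2s}$ above and integrate over an interval of length $2$, which is shorter and equally valid. The paper's explicit computation yields sharper constants, but since the lemma only asserts existence of $C_1,C_2$, your cruder bounds suffice.
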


\begin{proof}
Since $\mathcal O\subset\overline{\mathcal O}\subset(\R \setminus[-1,1])$, 
we have that there are two constants $1< a\le b$ such that $1<a\le |x|\le b$ for every $x\in\mathcal O$.  Thus, we have the following two cases.
\begin{itemize}
\item Case 1:  $1<a\le x\le b$. A simple calculation gives
\begin{align*}
\int_{-1}^1\frac{dy}{|x-y|^{1+2s}}=\frac{1}{2s}\left(\frac{1}{(x-1)^{2s}}-\frac{1}{(x+1)^{2s}}\right).
\end{align*}
Define $f:[a,b]\to [0,\infty)$ by $f(x):=\frac{1}{2s}\left(\frac{1}{(x-1)^{2s}}-\frac{1}{(x+1)^{2s}}\right)$. Then, $f$ is decreasing. Thus 
\begin{align}\label{f}
f(b)\le f(x)\le f(a)\;\mbox{ for every }\; a\le x\le b.
\end{align}

\item Case 2: $-b\le x\le -a<-1$. Then
\begin{align*}
\int_{-1}^1\frac{dy}{|x-y|^{1+2s}}=\frac{1}{2s}\left(\frac{1}{(-1-x)^{2s}}-\frac{1}{(1-x)^{2s}}\right).
\end{align*}
Define $\tilde f:[-b,-a]\to [0,\infty)$ by $\tilde f(x):=\frac{1}{2s}\Big(\frac{1}{(-1-x)^{2s}}-\frac{1}{(1-x)^{2s}}\Big)$. Then, $\tilde f$ is increasing. Thus 
\begin{align}\label{ff}
\tilde f(-b)\le f(x)\le \tilde f(-a)\;\mbox{ for every }-b\le x\le -a.
\end{align}
\end{itemize}
Now \eqref{C1} follows from \eqref{f} and \eqref{ff}. The proof is finished.
\end{proof}


Next, we recall that the non-local normal derivative of the solution $\psi$ to the adjoint system \eqref{Dual} is given by
\begin{align}\label{series}
\mathcal{N}_s \psi(x,t)=\sum_{n=1}^{\infty}\psi_{0,n}e^{-\lambda_n(T-t)}\mathcal{N}_s\varphi_{n}(x).
\end{align}
We have the following result.

\begin{lemma}
Let $\mathcal O\subset\overline{\mathcal O}\subset(\R \setminus[-1,1])$ be an arbitrary nonempty bounded open set. Let $\psi$ be the unique weak solution of the dual system \eqref{Dual}.
If $\psi_T\in L^{\infty}(-1,1)$, then $\mathcal{N}_s \psi\in L^{\infty}(\mathcal O\times(0,T))$.
\end{lemma}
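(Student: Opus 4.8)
The plan is to bound $\mathcal{N}_s\psi(x,t)$ pointwise for $x\in\mathcal O$ and $t\in(0,T)$ by going directly to the definition of the nonlocal normal derivative in \eqref{NLND} rather than to the series \eqref{series}. Since $x\in\mathcal O\subset\overline{\mathcal O}\subset(\R\setminus[-1,1])$ and $\psi(\cdot,t)=0$ in $\Omc$, for a.e. such $x$ we have
\begin{align*}
\mathcal{N}_s\psi(x,t)=C_s\int_{-1}^1\frac{\psi(x,t)-\psi(y,t)}{|x-y|^{1+2s}}\,dy=-C_s\int_{-1}^1\frac{\psi(y,t)}{|x-y|^{1+2s}}\,dy.
\end{align*}
Hence
\begin{align*}
|\mathcal{N}_s\psi(x,t)|\le C_s\int_{-1}^1\frac{|\psi(y,t)|}{|x-y|^{1+2s}}\,dy\le C_s\,\|\psi(\cdot,t)\|_{L^\infty(-1,1)}\int_{-1}^1\frac{dy}{|x-y|^{1+2s}},
\end{align*}
and the inner integral is bounded above by the constant $C_2$ from \eqref{C1}, uniformly in $x\in\mathcal O$. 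It therefore remains only to bound $\|\psi(\cdot,t)\|_{L^\infty(-1,1)}$ uniformly in $t\in(0,T)$ by a multiple of $\|\psi_T\|_{L^\infty(-1,1)}$.

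For that last ingredient I would invoke the maximum principle / comparison principle for the fractional heat equation. Indeed, $\psi$ solves the backward problem \eqref{Dual}; reversing time via $t\mapsto T-t$ turns it into a forward fractional heat equation on $(-1,1)$ with exterior Dirichlet datum $0$ and initial datum $\psi_T$. Applying Theorem \ref{theo-28} (comparison principle) to the pair $(\psi_T,\ \|\psi_T\|_{L^\infty(-1,1)})$ with zero exterior data — noting that the constant function $\|\psi_T\|_{L^\infty(-1,1)}$ is a supersolution and its negative a subsolution in the appropriate sense — yields
\begin{align*}
\|\psi(\cdot,t)\|_{L^\infty(-1,1)}\le \|\psi_T\|_{L^\infty(-1,1)}\qquad\text{for all }t\in[0,T].
\end{align*}
Combining this with the previous display gives $|\mathcal{N}_s\psi(x,t)|\le C_sC_2\|\psi_T\|_{L^\infty(-1,1)}$ for a.e. $(x,t)\in\mathcal O\times(0,T)$, which is exactly the assertion that $\mathcal{N}_s\psi\in L^\infty(\mathcal O\times(0,T))$.

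The main obstacle I anticipate is the $L^\infty$-bound on $\psi(\cdot,t)$: one must be careful that the weak solution $\psi$, a priori only in $C([0,T];L^2(-1,1))\cap L^2((0,T);\widetilde H_0^s(-1,1))$, actually admits such an $L^\infty$ estimate, and that the comparison principle of Theorem \ref{theo-28} applies to constant comparison functions (which are not in $\widetilde H_0^s(-1,1)$ but are perfectly good super/subsolutions when extended by themselves outside $(-1,1)$, so that the exterior datum is the constant, dominating $0$). An alternative, if one prefers to avoid the comparison principle, is to use the series representation \eqref{eq-25}, the fact that $L^\infty(-1,1)\hookrightarrow L^2(-1,1)$ on the bounded interval, the analytic smoothing of the semigroup $e^{-t(-\partial_x^2)_D^s}$, and known $L^\infty$-bounds for its eigenfunctions; but the comparison-principle route is shorter and cleaner, so I would take that. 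Everything else — the passage through \eqref{NLND} and the uniform bound \eqref{C1} — is routine.
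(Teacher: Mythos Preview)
Your approach is correct and takes a genuinely different route from the paper's. You work directly with the integral formula for $\mathcal{N}_s\psi$ and reduce the question to a uniform-in-time $L^\infty$ bound on $\psi(\cdot,t)$, which you obtain via the comparison/maximum principle. The paper instead argues through the eigenfunction series \eqref{series}: it first bounds $\|\mathcal{N}_s\varphi_n\|_{L^\infty(\mathcal O)}$ via \eqref{C1} (exactly your computation, but applied to each $\varphi_n$ separately), and then estimates the series termwise using $|\psi_{0,n}|\lesssim\|\psi_T\|_{L^\infty}$ together with the sum $\sum_n e^{-\lambda_n(T-t)}$.

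Your route is more elementary in that it bypasses any eigenfunction analysis and delivers the clean time-uniform bound $C_sC_2\|\psi_T\|_{L^\infty(-1,1)}$. By contrast, the series argument as written in the paper produces a bound depending on $t$ through $\sum_n e^{-\lambda_n(T-t)}$, and leaves the $n$-dependence of $\|\mathcal{N}_s\varphi_n\|_{L^\infty(\mathcal O)}\le C_sC_2\|\varphi_n\|_{L^\infty(-1,1)}$ inside the sum to be absorbed; so your argument is in fact tighter. The only delicate point is the one you already flag: comparing $\psi$ against the constants $\pm\|\psi_T\|_{L^\infty}$ is not literally covered by Theorem~\ref{theo-28} as stated, since a nonzero constant on $\Omc$ is not in $L^2$. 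This is easily repaired either by invoking the sub-Markovianity of the Dirichlet semigroup $e^{-t(-\partial_x^2)_D^s}$ (a standard consequence of the Dirichlet-form construction \eqref{closed}, giving $\|\psi(\cdot,t)\|_{L^\infty}\le\|\psi_T\|_{L^\infty}$ directly), or by comparing with a constant truncated to a large bounded set and passing to the limit. Either fix is routine, so the proposal stands.
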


\begin{proof}
Firstly, we claim that $\mathcal{N}_s \varphi_{n}\in L^{\infty}(\mathcal O)$ for every $n\in\NN$. Indeed, notice that the eigenfunction $\varphi_n\in L^\infty(-1,1)$ for every $n\in\NN$ and $\varphi_n(x)=0$ for a.e. $x\in\mathcal O$. Thus, for a.e. $x\in\mathcal O$ we have that
\begin{align}
|\mathcal{N}_s \varphi_{n}(x)|&\leq C_{s}\int_{-1}^1\left|\frac{\varphi_n(x)-\varphi_n(y)}{|x-y|^{1+2s}}\right|\;dy 
\leq C_{s}\int_{-1}^1\frac{|\varphi_n(y)|}{|x-y|^{1+2s}}\;dy\nonumber\\
&\leq C_{s} \|\varphi_n\|_{L^{\infty}(-1,1)}\int_{-1}^1\frac{1}{|x-y|^{1+2s}}\;dy \leq C_{s} C_2\|\varphi_n\|_{L^{\infty}(-1,1)},\label{acota}
\end{align} 
where in the last estimate we have used \eqref{C1}.  It follows from \eqref{acota} that $\mathcal{N}_s \varphi_{n}\in L^{\infty}(\mathcal O)$ for every $n\in\NN$. Now using \eqref{series}, we get that for a.e. $(x,t)\in \mathcal O\times (0,T)$,
\begin{align*}
|\mathcal{N}_s \psi(x,t)|\le& \sum_{n=1}^{\infty}\left|\psi_{0,n}e^{-\lambda_n(T-t)}\mathcal{N}_s\varphi_{n}(x)\right|\\
\le& \|\psi_T\|_{L^\infty(-1,1)}\|\mathcal N_s\varphi_n\|_{L^\infty(\mathcal O)}\sum_{n=1}^\infty e^{-\lambda_n(T-t)}\\
\le& C(T)\|\psi_T\|_{L^\infty(-1,1)}\|\mathcal N_s\varphi_n\|_{L^\infty(\mathcal O)}<\infty.
\end{align*}
The proof is finished.
\end{proof}

We recall that $\mathcal{M}(\mathcal{O}\times(0,T))$
 is the space of Radon measures endowed with the norm 
\begin{multline*}
\|\mu\|_{\mathcal{M}(\mathcal{O}\times(0,T))}:=\sup\Big\{\int_{\mathcal{O}\times(0,T)} \xi(x,t) d\mu(x,t)\; : \; \xi\in C_c(\overline{\mathcal{O}}\times[0,T],\RR),\\\; \max_{\overline{\mathcal{O}}\times[0,T]}|\xi|=1  \Big\}.
\end{multline*}

Next, we introduce our notion of solutions to the system \eqref{eq-main} with an exterior measure datum. 

\begin{definition}
Let $u_0\in L^2(-1,1)$, $T>0$ and $g\in \mathcal{M}(\mathcal{O}\times(0,T))$. We shall say that the function $u\in L^1((-1,1)\times(0,T))$ is a solution of \eqref{eq-main} defined by transposition, if it satisfies the identity
\begin{align}\label{transposition}
\int_{\mathcal{O}\times(0,T)}\mathcal{N}_s\psi(x,t)dg(x,t)=\int_{-1}^1 u_0(x)\psi(x,0)dx-\langle u(\cdot,T), \psi_T\rangle_{L^1(-1,1),L^\infty(-1,1)},
\end{align}
where for every $\psi_T\in L^{\infty}(-1,1)$, $\psi\in L^{\infty}((-1,1)\times(0,T))$ is the unique weak solution of 
\begin{equation}\label{Dual trans}
\begin{cases}
-\partial_t \psi +(-\partial_x^2)^s\psi=0\;\;&\mbox{ in }\; (-1,1)\times (0,T),\\
\psi=0&\mbox{ in }\;(\Omc)\times (0,T),\\
\psi(\cdot,T)=\psi_T&\mbox{ in }\;(-1,1).
\end{cases}
\end{equation} 
\end{definition}

Now we are ready to give the proof of the last main result.

\begin{proof}[\bf Proof of Theorem \ref{radon measure}]
By definition of the minimal controllability time $T_{\min}$, we have that for each 
\begin{align*}
T_k:=T_{\min}+\frac{1}{k}, \quad k\geq 1,
\end{align*}
there exists a sequence of non--negative controls  
\begin{align*}
	(g^{T_k})_{k\geq 1}\subset L^{\infty}(\mathcal{O}\times (0,T_k))\cap L^2((0,T_k);\widetilde H_0^s(\mathcal O))
\end{align*}
such that the associated solutions $(u^k)_{k\ge 1}$ of \eqref{eq-main} with initial data $u^k(\cdot,0)=u_0$ a.e. in $(-1,1)$, satisfy $u^k(x,T_k)=\widehat{u}(x,T_k)$ for a.e. $x\in (-1,1)$. We extend these controls by $\widehat{g}$ in $(T_k, T_{\min}+1)$ to get a new sequence of controls $\{g^{T_k}\}_{k\geq 1}\subset  L^{\infty}(\mathcal{O}\times(0,T_{\min}+1))\cap L^2((0,T_{\min}+1);\widetilde H_0^s(\mathcal{O}))$.

Let $\varphi_1$ be the first non-negative eigenfunction of $(-\partial_x^2)_D^s$ (see \eqref{ei-val-pro}) and consider the  problem
\begin{equation}\label{Dual-eigen}
\begin{cases}
-\partial_t \psi +(-\partial_x^2)^s\psi=0\;\;&\mbox{ in }\; (-1,1)\times (0,T_{\min}+1),\\
\psi=0&\mbox{ in }\;(\Omc)\times (0,T_{\min}+1),\\
\psi(\cdot,T_{\min}+1)=\varphi_1&\mbox{ in }\;(-1,1).
\end{cases}
\end{equation} 

Firstly, the solution $\psi$ of \eqref{Dual-eigen} satisfies $\psi\in C([0,T_{\min}+1];D((-\partial_x^2)_D^s))\hookrightarrow C([-1,1]\times[0,T_{\min}+1])$. Secondly, due to Theorem \ref{theo-28} we have that there is a constant $\alpha>0$ such that
\begin{align}\label{aa}
\psi(x,t)\geq \alpha>0 \quad \forall (x,t)\in (-1,1)\times(0,T_{\min}+1).
\end{align}
Besides, using \eqref{C1} and \eqref{aa}, we get that for a.e. $(x,t)\in \mathcal O\times (0,T)$, 
\begin{align*}
\mathcal{N}_s\psi(x,t)=C_s\int_{-1}^1\frac{-\psi(y,t)}{|x-y|^{1+2s}}dy\leq -C_s\alpha\int_{-1}^1\frac{1}{|x-y|^{1+2s}}dy\le -C_sC_1\alpha.
\end{align*}
Therefore, taking $\beta:=C_sC_1\alpha>0$, we get that
\begin{align*}
\mathcal{N}_s\psi(x,t)\leq -\beta, \quad \mbox{ for a.e. } (x,t)\in \mathcal O\times(0,T_{\min}+1).
\end{align*}

Using the positivity of $g^{T_k}$ and \eqref{transposition}, we we can deduce that there is a constant $M>0$ such that
\begin{align*}
\beta\|g^{T_k}\|_{L^1(\mathcal{O}\times(0,T_{\min}+1))}&=\beta\int_0^{T_{\min}+1}\int_\mathcal{O}g^{T_k}(x,t)dxdt\\
&\leq \int_0^{T_{\min}+1}\int_\mathcal{O}-\mathcal{N}_s\psi(x,t) \ g^{T_k}(x,t)dxdt\\
&=\langle u(\cdot,T_{\min}+1),\varphi_1\rangle_{L^1(-1,1),L^\infty(-1,1)} - \int_{-1}^1u_0(x)\psi(x,0)dx \\
&\leq M,
\end{align*}
where the last estimate follows from the continuous dependence of solutions on the initial data.
We have shown that the sequence $\{g^{T_k}\}_{k\geq 1}$ is bounded in $L^1(\mathcal{O}\times(0,T_{\min}+1))$, and hence, it is bounded in $\mathcal{M}(\mathcal{O}\times(0,T_{\min}+1))$. Thus, there exists $\widetilde{g}\in \mathcal{M}(\mathcal{O}\times(0,T_{\min}+1))$ such that, up to a subsequence if necessary, 
\begin{align*}
g^{T_k} \rightharpoonup \widetilde{g} \quad \text{weakly--}\star \text{ in }\mathcal{M}(\mathcal{O}\times(0,T_{\min}+1)), \quad \text{ as }k\to\infty.
\end{align*}
It is also clear that $\widetilde{g}$ satisfies the non-negativity constraint.

Next, for every $k$ large enough and $T_{\min}<T_0<T_{\min}+1$, using \eqref{transposition} and the fact that $g^{T_k}$ is a trajectory control, we get that for every $\psi_{T_0}\in L^{\infty}(-1,1)$,
\begin{align}\label{hh}
\int_0^{T_0}\int_{\mathcal{O}}\mathcal{N}_s\psi(x,t)dg^{T_k}(x,t)=\int_{-1}^1u_0(x)\psi(x,0)dx-\langle \widehat{u}(\cdot,T_0),\psi_{T_0}\rangle_{L^1(-1,1),L^\infty(-1,1)}.
\end{align}

In particular, taking $\psi_{T_0}$ smooth enough, we get that $\mathcal{N}_s\psi \in C((\overline{\mathcal{O}}\times[0,T])$. Thus, by the weak-$\star$ convergence, taking the limit of\eqref{hh} as $k\to\infty$, we get that
\begin{align}\label{hh1}
\int_0^{T_0}\int_{\mathcal{O}}\mathcal{N}_s\psi(x,t)d\widetilde{g}(x,t)=\int_{-1}^1u_0(x)\psi(x,0)dx-\langle \widehat{u}(\cdot,T_0),\psi_{T_0}\rangle.
\end{align}

The identity \eqref{hh1} together with \eqref{transposition} imply that $u(x,T_0)=\widehat{u}(x,T_0)$ for a.e. $x\in (-1,1)$. Finally, taking the limit as $T_0\to T_{\min}$ and using the fact that 
\begin{align*}
|\widetilde{g}|(\mathcal{O}\times(T_{\min},T_0))=|\widehat{g}|(\mathcal{O}\times(T_{\min},T_0))=0, \quad \text{ as }T_0\to T_{\min},
\end{align*}
we can deduce that $u(x,T_{\min})=\widehat{u}(x,T_{\min})$ for a.e. $x\in (-1,1)$. The proof is complete.
\end{proof}

\section{Numerical simulations}\label{sec-num}

Our main Theorems \ref{main3}, \ref{main4}, and \ref{radon measure} state that the non-local heat equation \eqref{eq-main} is controllable from every initial datum $u_0\in L^2(-1,1)$ to any positive trajectory $\widehat{u}$, by using a non-negative control $g\in L^{\infty}((0,T);L^2(\mathcal{O}))\cap L^2((0,T);\widetilde H_0^s(\mathcal{O}))$, whenever $\frac 12<s<1$, $\mathcal{O}\subset \overline{\mathcal O}\subset(\R\setminus[-1,1])$ is a bounded open set,  and the controllability time is large enough. Moreover, in the minimal controllability time $T_{\rm min}$, this same result is achieved with controls in the space of Radon measures.

The aim of this final section is to present some numerical examples confirming these theoretical conclusions. To this end, we shall first discuss how to approximate the following exterior problem:
\begin{align}\label{HE}
	\begin{cases}
		\partial_t u + (-\partial_x^2)^{s} u = 0 & \mbox{ in }\; (-1,1)\times(0,T),
		\\
		u=g &\mbox{ in }\; (\Omc)\times (0,T), 
		\\
		u(\cdot,0) = 0&\mbox{ in }\; (-1,1).
	\end{cases}
\end{align}

In what follows, we will employ a FE approach, which is based on the variational formulation associated to \eqref{HE}. Notice that \eqref{HE} is not the classical one-dimensional boundary problem, in which the non-homogeneous datum $g$ is supported on the boundary $\{-1\}\times(0,T)$ or $\{1\}\times(0,T)$. The fact that $g$ is supported in the exterior of the domain $(-1,1)$ introduces some difficulties in the approximation process which requires a more careful analysis.

We impose the exterior condition in \eqref{HE} by using the approach from \cite{antil2019external1} (see also \cite{antil2019external} for the stationary problem). We first approximate the Dirichlet problem \eqref{HE} by the fractional Robin problem
\begin{align}\label{HE-approx}
	\begin{cases}
		\partial_t u^n + (-\partial_x^2)^{s} u^n = 0 & \mbox{ in }\; (-1,1)\times(0,T),
		\\
		\mathcal{N}_su^n+n\kappa u^n=n\kappa g &\mbox{ in }\; (\Omc)\times (0,T), 
		\\
		u^n(\cdot,0) = u_0 & \mbox{ in }\; (-1,1),
	\end{cases}
\end{align}
where $n\in\NN$ is a fixed, $\kappa\in L^1(\Omc)\cap L^\infty(\Omc)$ is a given non-negative function. Indeed, it has been shown in the aforementioned reference that the weak solution $u^n$ to \eqref{HE-approx} converges to a weak solution $u$ to \eqref{HE}, at a rate of $\mathcal O(n^{-1})$. More precisely, if we let the solution space of $u^n$ to be
\begin{align*}
H_\kappa^s(-1,1):=\Big\{u:\R\to\R\;\mbox{ measurable and }\;\|u\|_{H_\kappa^s(-1,1)}<\infty\Big\},
\end{align*}
where
\begin{align*}
\|u\|_{H_\kappa^s(-1,1)}^2:=\int_{-1}^1|u|^2\;dx+\int_{\Omc}|u|^2\kappa\;dx +\int_{\R^2\setminus(\Omc)^2}\frac{|u(x)-u(y)|^2}{|x-y|^{1+2s}}\;dxdy , 
\end{align*}
then the following result holds (cf.~\cite[Theorem 5.3]{antil2019external1}).

\begin{theorem}\label{Th-approx}
Let $g\in H^1((0,T);H^s(\Omc))$ and $u^n\in L^2((0,T); H_\kappa^s(-1,1)\\\cap L^2(\Omc))\cap H^1((0,T);(H_\kappa^s(-1,1)\cap L^2(\Omc))^{*})$ be the weak solution of \eqref{HE-approx}. Let $u\in L^2((0,T);H^s(\R))\cap H^1((0,T); \widetilde H^{-s}(-1,1))$ be the weak solution of \eqref{HE}. Then, there is a constant $C>0$, independent of n, such that
\begin{align}\label{approximation}
	\|u-u^n\|_{L^2((0,T);L^2(\R))}\leq \frac{C}{n}\|u\|_{L^2((0,T);H^s(\R))}.
\end{align}
In particular, $u^n$ converges strongly to $u$ in $L^2((0,T); L^2(-1,1))=L^2((-1,1)\times(0,T))$ as $n\to\infty$. 
\end{theorem}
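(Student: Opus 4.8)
The plan is to compare the weak formulations of \eqref{HE} and \eqref{HE-approx} and to run an energy estimate on the error $w^n:=u^n-u$, the point being that the Robin penalization plays the role of a Neumann-type datum for $w^n$ of size $O(n^{-1})$. First I would record the two variational identities: the Dirichlet solution $u$ of \eqref{HE} satisfies $u=g$ in $\Omc$ and, for every test function $v\in\widetilde H_0^s(-1,1)$ (hence $v=0$ in $\Omc$), $\langle u_t,v\rangle_{-s,s}+\mathcal F(u,v)=0$ with $\mathcal F$ the form in \eqref{closed}; the Robin solution $u^n$ of \eqref{HE-approx} satisfies, for every $v\in H_\kappa^s(-1,1)\cap L^2(\Omc)$,
\[
\langle u^n_t,v\rangle+\frac{C_s}{2}\iint_{\R^2\setminus(\Omc)^2}\frac{(u^n(x)-u^n(y))(v(x)-v(y))}{|x-y|^{1+2s}}\,dxdy+n\int_{\Omc}\kappa u^n v\,dx=n\int_{\Omc}\kappa g\,v\,dx.
\]
Restricting this second identity to $v\in\widetilde H_0^s(-1,1)$, the penalization terms disappear, the truncated double integral equals the full one over $\R^2$ (the integrand vanishes on $(\Omc)^2$ since $v=0$ there), and the identity collapses to the first one with $u^n$ replacing $u$. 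Subtracting, $w^n$ solves the fractional heat equation in $(-1,1)$ with zero initial datum in $(-1,1)$ (because $u^n(\cdot,0)=u_0=u(\cdot,0)$ there) and exterior datum $r^n:=u^n|_{\Omc}-g$, in the sense of Theorem \ref{expl-2}.

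Next I would quantify $r^n$. The transmission condition in \eqref{HE-approx}, namely $\mathcal N_su^n=n\kappa(g-u^n)$ a.e.\ in $\Omc$, gives $r^n=u^n-g=-\tfrac1n\kappa^{-1}\mathcal N_su^n$, so $\|r^n\|_{H^s(\Omc)}\le\tfrac1n\|\kappa^{-1}\mathcal N_su^n\|_{H^s(\Omc)}$; what remains is a bound on $u^n$, hence on $\mathcal N_su^n$ restricted to $\Omc$, that is uniform in $n$. This I would get by testing the Robin identity with $u^n-G$, where $G\in H^1((0,T);H^s(\R))$ is a fixed extension of $g$: on $\Omc$ one has $G=g$, so the potentially divergent term $-n\int_{\Omc}\kappa g(u^n-G)$ recombines with $n\int_{\Omc}\kappa u^n(u^n-G)$ into $n\int_{\Omc}\kappa|u^n-g|^2\ge0$, which stays on the left and is absorbed; this leaves a bound on $\|u^n\|_{L^2((0,T);H^s_\kappa(-1,1))}$ and on $\sqrt n\,\|u^n-g\|_{L^2((0,T)\times\Omc)}$ depending only on $\|G\|$ and not on $n$. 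Carrying this out on Galerkin approximants of $u^n$ makes it rigorous; note it already gives the crude $\|u^n-g\|_{L^2((0,T)\times\Omc)}=O(n^{-1/2})$, which the transmission identity upgrades to the sharp $O(n^{-1})$.

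To close the loop I would invoke a stability estimate for the exterior problem solved by $w^n$. Extending $r^n$ to $\R$ by some $H^n$ with $\|H^n\|_{L^2((0,T);H^s(\R))}+\|H^n_t\|_{L^2((0,T);\widetilde H^{-s}(-1,1))}\le C\|r^n\|_{H^s(\Omc)}$ ($C$ depending only on $s$ and $\Omc$), the function $w^n-H^n$ lies in $L^2((0,T);\widetilde H_0^s(-1,1))\cap H^1((0,T);\widetilde H^{-s}(-1,1))$, and testing it against itself, together with coercivity of $\mathcal F$, Young's inequality and Gronwall's lemma, yields
\[
\|w^n\|_{L^2((0,T);L^2(\R))}\le C\|r^n\|_{H^s(\Omc)}\le\frac Cn\|\kappa^{-1}\mathcal N_su^n\|_{H^s(\Omc)}\le\frac Cn\|u\|_{L^2((0,T);H^s(\R))},
\]
which is \eqref{approximation}; strong convergence in $L^2((-1,1)\times(0,T))$ follows at once. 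I expect the real obstacle to be purely functional-analytic bookkeeping: one must pick the space for $r^n$ on $\Omc$ so that simultaneously the exterior stability estimate holds with an $n$-independent constant and is still controlled by $\tfrac1n\|\mathcal N_su^n\|$, which forces structural assumptions making $\kappa^{-1}\mathcal N_su^n$ legitimate (e.g.\ $\kappa$ bounded below away from $\{\pm1\}$, and $g,\kappa$ smooth enough for the extensions and products above to remain in $H^s$). It is exactly the single factor of $n$ in the transmission condition that turns the energy method's natural $O(n^{-1/2})$ into the announced $O(n^{-1})$.
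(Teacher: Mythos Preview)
The paper does not give its own proof of this theorem: it is quoted verbatim from \cite[Theorem~5.3]{antil2019external1} (the line just before the statement reads ``then the following result holds (cf.~\cite[Theorem 5.3]{antil2019external1})''), and the text immediately after the statement moves on to the numerical implementation. So there is nothing in the present paper to compare your argument against.

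That said, your sketch follows the natural energy-method route for Robin-to-Dirichlet penalization and is broadly in the right spirit. A few points would need tightening before it becomes a proof. First, dividing by $\kappa$ is not justified under the stated hypothesis $\kappa\in L^1(\Omc)\cap L^\infty(\Omc)$, $\kappa\ge 0$: nothing prevents $\kappa$ from vanishing, so writing $r^n=-\tfrac1n\kappa^{-1}\mathcal N_s u^n$ and then estimating $\|\kappa^{-1}\mathcal N_s u^n\|_{H^s(\Omc)}$ is formal at best; you flag this yourself at the end, but it is a genuine gap, not mere bookkeeping. Second, the last inequality in your chain, $\|\kappa^{-1}\mathcal N_s u^n\|_{H^s(\Omc)}\le C\|u\|_{L^2((0,T);H^s(\R))}$, is asserted but not argued: your uniform a priori bound on $u^n$ was obtained in terms of the lifted datum $G$, not in terms of the Dirichlet solution $u$, so an extra step is needed to land on the right-hand side of \eqref{approximation}. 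Third, the exterior stability estimate you invoke requires control of $r^n$ (and its time derivative) in appropriate Sobolev norms on $\Omc$, and the passage from the transmission identity to $H^s$-regularity of $r^n$ uniformly in $n$ is nontrivial and deserves its own argument.
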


Thus for a sufficiently large $n$, \eqref{HE-approx} approximates \eqref{HE} well. In view of that, for the remainder of this section, instead of \eqref{HE} we will consider \eqref{HE-approx} with $n=10^9$, giving an approximation of the order $\mathcal O(10^{-9})$.

Concerning now the control problem, we discretize \eqref{HE-approx} in the interval $(-2,2)$ by assuming that the control function $g$ is supported in a subset $\mathcal O$ of $((-2,2)\setminus [-1,1])$. In that case, we can take  $\kappa=1$ and the control function $g$ to be supported in $\mathcal O\times(0,T)$ by multiplying it with the characteristic function $\chi_{\mathcal O\times(0,T)}$. In other words, we will consider the following control problem:
\begin{align}\label{main-num}
	\begin{cases}
		\partial_t u^n + (-\partial_x^2)^{s} u^n = 0 & \mbox{ in }\; (-1,1)\times(0,T),
		\\
		\mathcal{N}_su^n+nu^n=ng\chi_{\mathcal{O}\times(0,T)} &\mbox{ in }\; ((-2,2)\setminus(-1,1))\times (0,T), 
		\\
		u^n(\cdot,0) = u_0&\mbox{ in }\; (-1,1).
	\end{cases}
\end{align}
For the target trajectory, we consider
\begin{align}\label{eq:widehatu}
	\widehat{u}(x,T):=\frac{\Gamma\left(\frac 12\right)2^{-2s}e^T}{\Gamma(1+s)\Gamma\left(\frac 12 +s\right)}\left(1-|x|^2\right)_+^s,
\end{align}
which is known (see for instance \cite{antil2019external1}) to be the exact solution to the Dirichet problem evaluated at the final time $T$, i.e., $\widehat{u}$ satisfies
\begin{align}\label{main-num-1}
	\begin{cases}
		\partial_t \widehat{u} + (-\partial_x^2)^{s} \widehat{u} = z_{exact}+e^t & \mbox{ in }\; (-1,1)\times(0,1),
		\\
		\widehat{u}=z_{exact} &\mbox{ in }\; ((-2,2)\setminus(-1,1))\times (0,1), 
		\\
		\widehat{u}(\cdot,0) = z_{exact}(\cdot,0)&\mbox{ in }\; (-1,1),
\end{cases}
\end{align}
where 
\begin{align*}
	z_{exact}(x,t):=\frac{\Gamma\left(\frac 12\right)2^{-2s}e^t}{\Gamma(1+s)\Gamma\left(\frac 12 +s\right)}\left(1-|x|^2\right)_+^s.
\end{align*}
We focus on the following two specific situations:
\begin{itemize}
	\item \textbf{Case 1}: Set the initial datum to be
	\begin{align*}
		u_0(x) := \frac 12\cos\left(\frac \pi2 x\right).
	\end{align*}
	In this case, we have that $u_0<\widehat{u}(\cdot,T)$ in $(-1,1)$, where $\widehat{u}$ is as in \eqref{eq:widehatu}.
		
	\item \textbf{Case 2}: Set the initial datum to be
	\begin{align*}
		u_0(x) := 1.8\cos\left(\frac \pi2 x\right).
	\end{align*}
	In this case, we have that $u_0>\widehat{u}(\cdot,T)$ in $(-1,1)$, where $\widehat{u}$ is as in \eqref{eq:widehatu}.
\end{itemize}

In both cases, we first estimate numerically $T_{\rm min}$ by formulating the minimal-time control problem as an optimization problem. We show that in this computed minimal time, the fractional heat equation \eqref{eq-main} is controllable from $u_0\in L^2(-1,1)$ to the given trajectory $\widehat{u}(\cdot,T)$ (cf.~\eqref{eq:widehatu}) by means of a non-negative control $g$. Secondly, we will show that, for $T< T_{\rm min}$ this controllability result is not achieved.

In all cases, we choose the sub-interval $\mathcal O=(1.7,1.9)\subset ((-2,2)\setminus [-1,1])$ as the control region. Moreover, we focus on the case $\frac 12<s<1$, where we know that \eqref{eq-main} is controllable. In particular, we will always take $s=0.8$.

\subsection{Case 1: $u_0<\widehat{u}(\cdot,T)$}

We first consider the case where the initial datum $u_0$ is below the final target $\widehat u(\cdot,T)$. We begin by estimating the minimal controllability time $T_{\rm min}$ by solving an optimization problem. Next we address the numerical constrained controllability of \eqref{eq-main} in a time horizon $T\geq T_{\rm min}$. Finally, we  consider the case where $T< T_{\rm min}$.

\subsubsection{Calculation of minimal controllability time $T_{\rm min}$}

To obtain $T_{\rm min}$, we consider the following constrained optimization problem:
\begin{align}\label{minT}
	\text{minimize}\quad T
\end{align}
subject to
\begin{align}\label{min-opt}
	\begin{cases}
		T>0,
		\\
		\partial_t u^n + (-\partial_x^2)^{s} u^n = 0 & \mbox{ in }\; (-1,1)\times(0,T),
		\\
		\mathcal{N}_su^n+nu^n=ng\chi_{\mathcal{O}\times(0,T)} &\mbox{ in }\; ((-2,2)\setminus(-1,1))\times (0,T), 
		\\
		u^n(\cdot,0) = u_0\geq 0&\mbox{ in }\; (-1,1),
		\\
		g\geq 0 & \mbox{ in }\; \mathcal{O}\times(0,T) ,
	\end{cases}
\end{align}
which we solve using {\tt CasADi} open-source tool for nonlinear optimization and algorithmic differentiation \cite{Andersson2019}.
We stress that, in the above optimization problem, both $T$ and $g$ will be considered as variables which need to be computed.

The PDE in \eqref{min-opt} is discretized over a uniform partition of the space interval $(-2, 2)$ as follows:
\begin{align*}
	-2=x_0<x_1<\ldots<x_{N-1}<x_{N}=2,
\end{align*}
where $x_i=x_{i-1}+h$, for all $i\in\{0,1,\ldots,N\}$, with $h$ denoting the distance between two consecutive points. We use $\mathfrak{M}$ to denote a mesh with points $\{x_i:\; i=0,1,\ldots,N\}$. In all our examples we have set $N=210$. 

We use globally continuous piece-wise linear finite element method on the aforementioned mesh to discretize in space. We denote the resulting finite element space by $\V_h$. We apply Backward-Euler, on a grid $t_k = \frac{Tk}{N_t}$,  $k = 0,\ldots,M$, to discretize in time. In all our experiments, we have set $N=210$ and $M=300$. Then, given $u_h^0 = u_0$, for $k=1,\ldots,M$, we need to solve for $u_h^k\in \V_h$ via
\begin{align}\label{vartional-discrete}
	\int_{-1}^1\frac{u_h^{k}-u_h^{k-1}}{\delta t} v \ dx+ n\mathcal{F}(u_h^k, v)+ \int_{\Omc} nu_h^kv \ dx=\int_{\mathcal{O}} ng^kv \ dx , \quad \forall v\in \V_h,
\end{align}
where the closed bilinear form $\mathcal{F}$ is given in \eqref{closed}. The approximation of $\mathcal{F}(u_h^k, v)$ is carried out by using the approach of \cite{BiHe}.

By solving \eqref{minT} we obtain that $T_{\rm min} = 0.4739$. Next, we solve the state equation with $T = T_{\rm min}$, the results are given in Figure \ref{fig1}. We clearly notice that in this time horizon, we are able to steer the initial datum $u_0$ to the desired target $\widehat{u}$ while maintaining the positivity of the solution.

\begin{SCfigure}[0.45][h]
	\centering 
	\includegraphics[scale=0.25]{./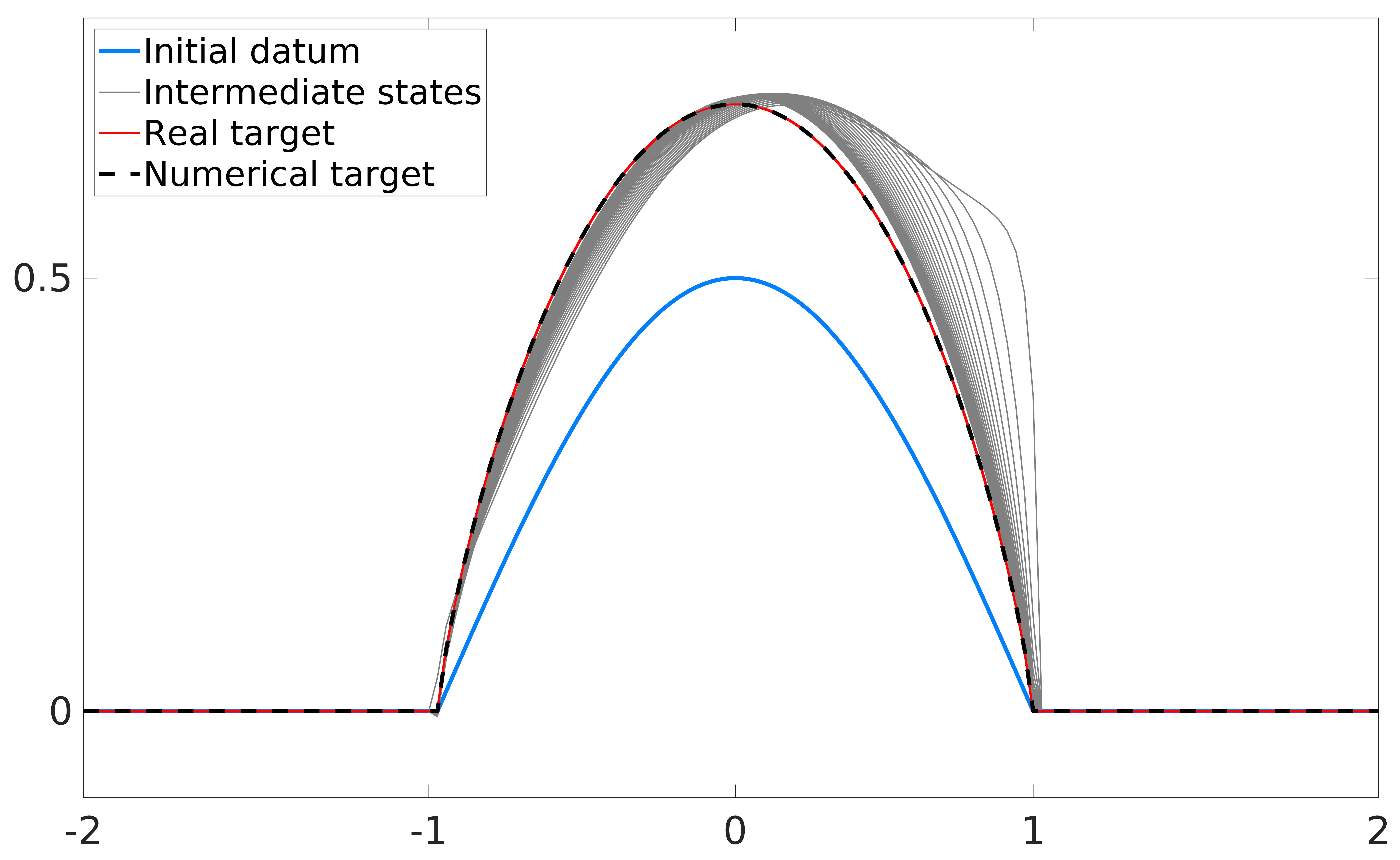}
	\caption{Evolution of the solution to \eqref{HE-approx} in the time interval $(0,T_{\min})$ with $s=0.8$. The blue line is the initial configuration $u_0$. The red line is the target $\widehat{u}(\cdot,T)$ ($T = T_{\rm min}$) configuration. The black dashed line is the numerical solution at $T = T_{\rm min}$}.
	\label{fig1}
\end{SCfigure}

The Figures \ref{fig2} and \ref{fig3} show the behavior of the control from $t = 0$ to $T = T_{\rm min}$.  Since the amplitude of control impulses is comparatively large, therefore, we have used logarithmic scale to plot Figure~\ref{fig3}.  We notice that at first, the control produces an initial shock and as a result it raises the value of the solution close to the final target. After an intermediate period, it shows an impulsive behavior to adjust to the trajectory of the desired state. Notice that the controllability at $T = T_{\rm min}$ and the impulsive behavior are both according to our theoretical results.

Intuitively the behavior of the control in Figures \ref{fig2} and \ref{fig3} is natural. Our goal is to reach a  target which is above the initial datum $u_0$. This means that the control needs to countervail the dissipation of the solution of \eqref{main-num}, by acting on it from the very beginning with a positive force. 

\begin{SCfigure}[0.7][h]
	\includegraphics[scale=0.45]{./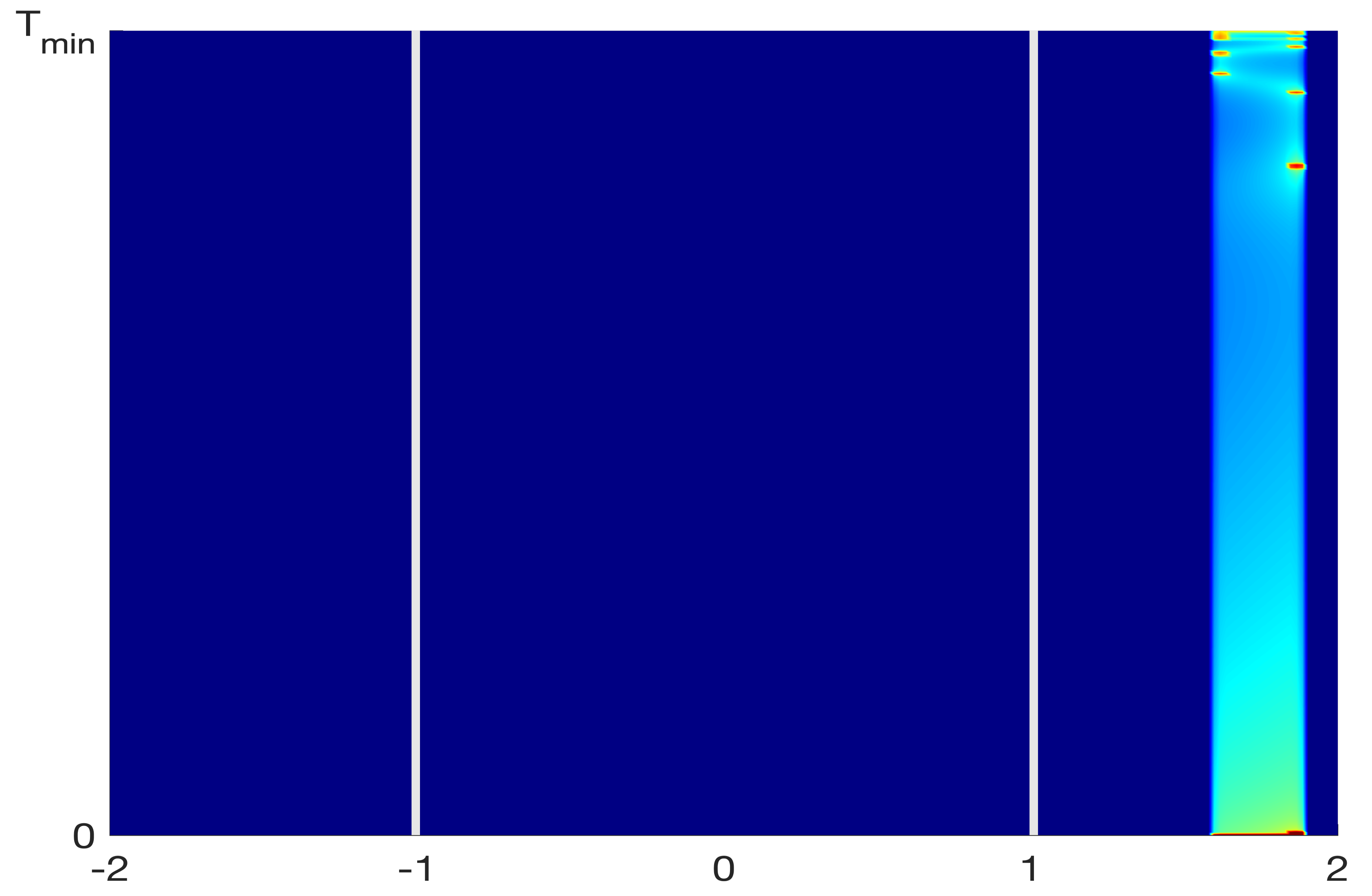}
	\caption{Minimal-time control: space-time distribution of the control. The white lines delimit the dynamics region $(-1,1)$.}
	\label{fig2}
\end{SCfigure}

\begin{SCfigure}[0.5][h]
	\centering 
	\includegraphics[scale=0.55]{./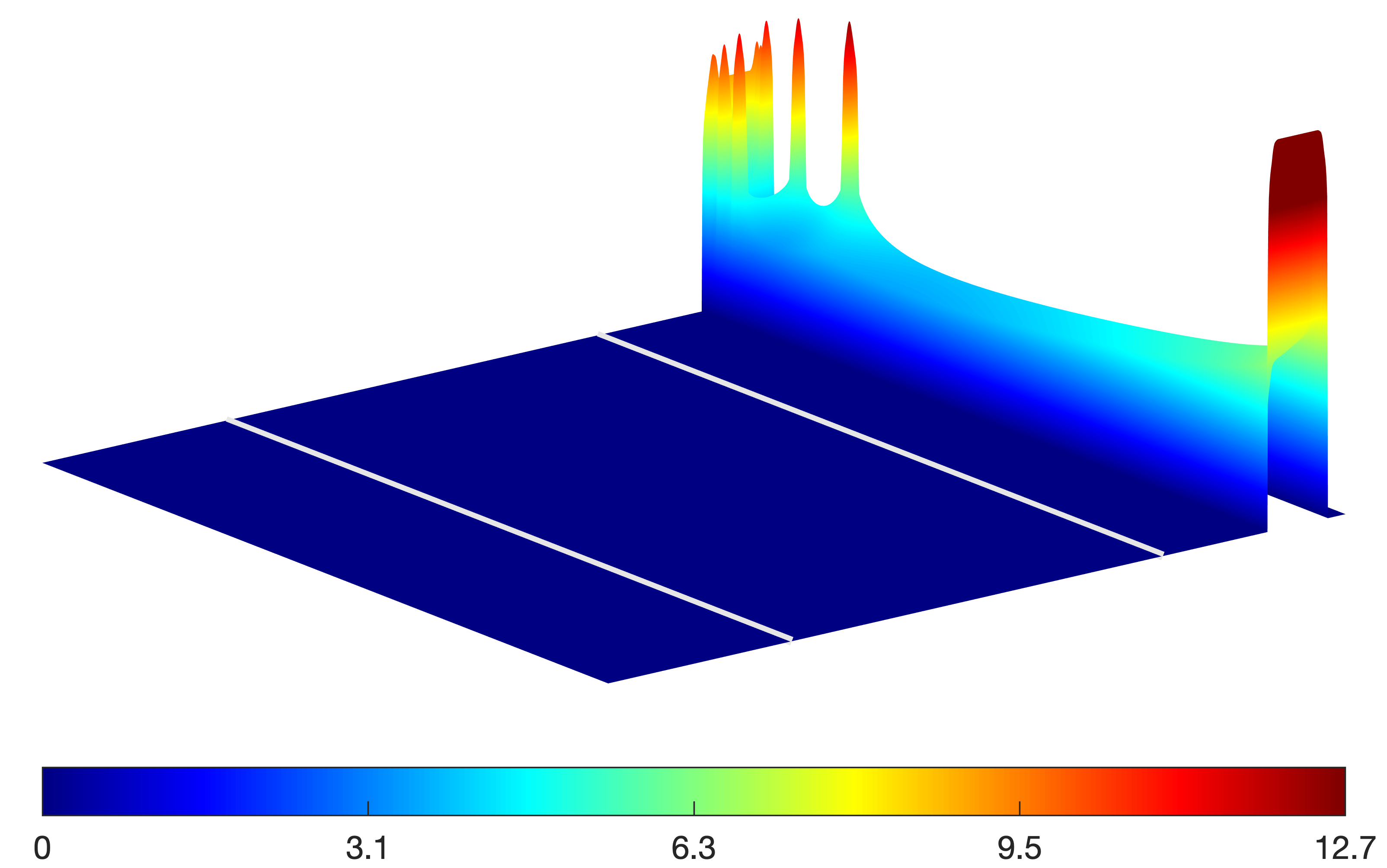}
	\caption{Minimal-time control: intensity of the impulses in logarithmic scale. In the $(x,t)$ plane in blue the time $t$ varies from $t = 0$ (bottom) to $t = T_{\rm min}$ (top).}
	\label{fig3}
\end{SCfigure}

\subsubsection{Lack of controllability when $T < T_{\rm min}$}

In this section, we conclude our discussion on Case 1 by showing the lack of controllability of \eqref{eq-main} when the 
time horizon $T < T_{\rm min}$. 

To this end, we employ a classical gradient method implemented in the DyCon Computational Toolbox (\cite{dycontoolbox}) to solve the following optimization problem: 
\begin{align}\label{opt_dycon}
	\min\;\|u(\cdot,T)-\widehat{u}(\cdot,T)\|^2_{L^2(-1,1)}
\end{align}
subject to the constraints \eqref{min-opt}.

We choose a time horizon $T=0.2 < T_{\rm min}$ and solve the constrained optimization problem \eqref{opt_dycon}.

In Figure~\ref{fig4} we notice that we cannot control the solution to  \eqref{eq-main} any longer. The positive control displayed in Figure~\ref{fig5} is trying to push the initial datum $u_0$ to the desired target but since $T < T_{\rm min}$, we are unable to steer $u_0$ to $\widehat{u}(\cdot,T)$.

\begin{SCfigure}[0.7][h]
	\includegraphics[scale=0.2]{./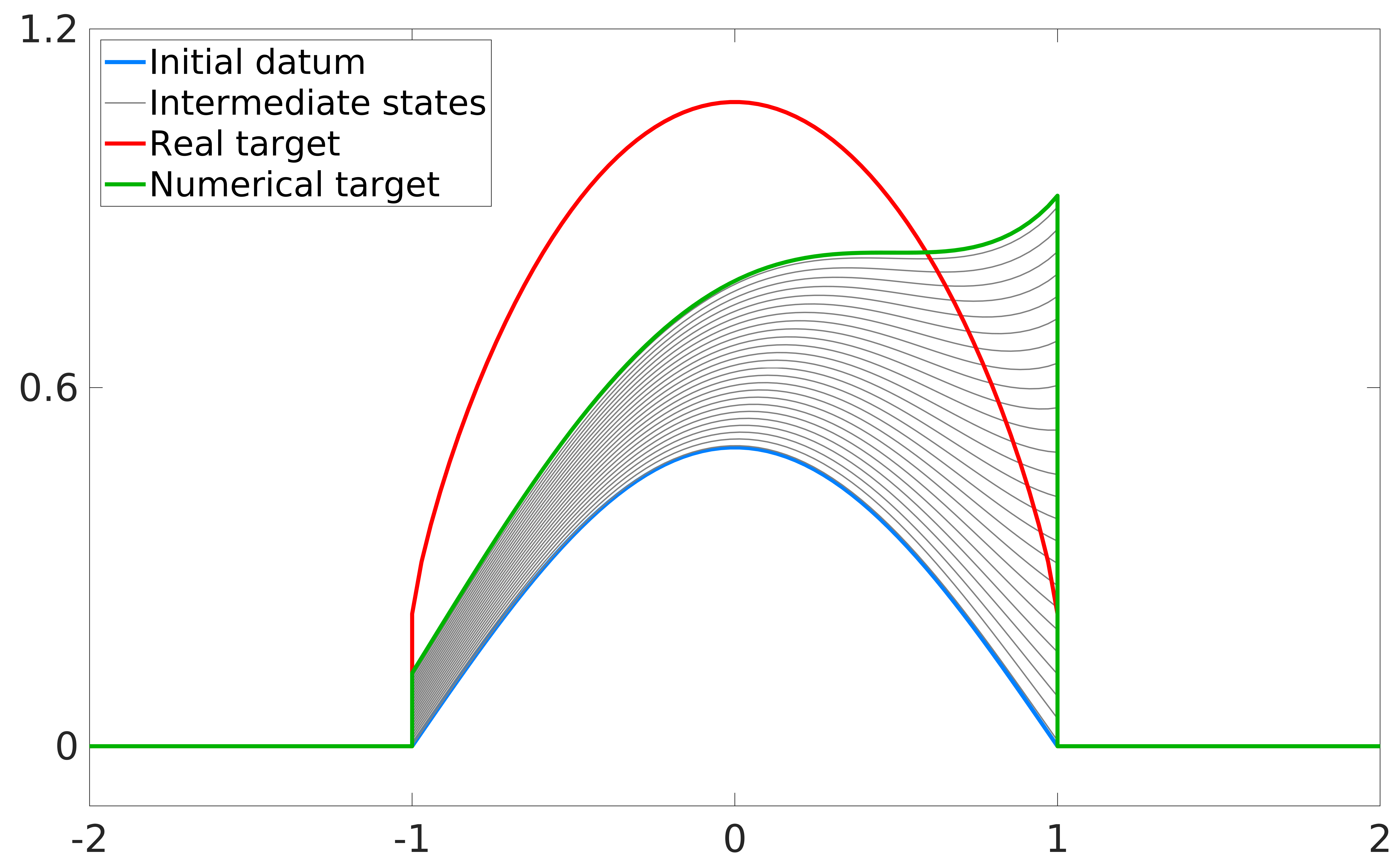}
	\caption{Evolution in the time interval $(0,0.2)$ of the solution to \eqref{main-num} with $s=0.8$ and $n=10^9$. The equation is not controllable to the desired trajectory.}
	\label{fig4}
\end{SCfigure}

\begin{SCfigure}[0.7][h]
	\centering 
	\includegraphics[scale=0.55]{./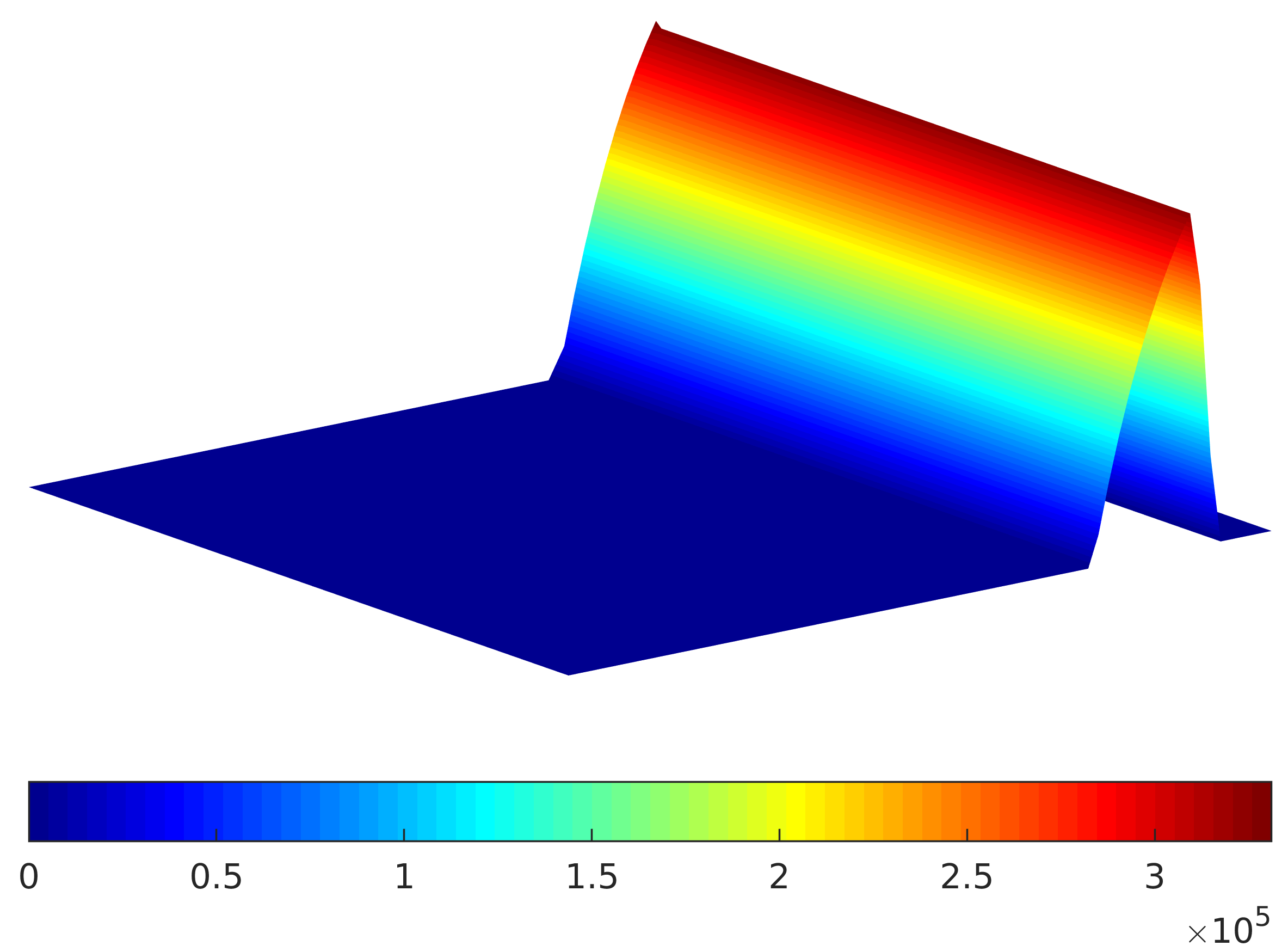}
	\caption{Evolution in the time interval $(0,0.2)$ of the control function computed through the minimization process \eqref{opt_dycon}-\eqref{min-opt}.}
	\label{fig5}
\end{SCfigure}

\subsection{Case 2: $u_0>\widehat{u}(\cdot,T)$}

Let us now consider the case of an initial datum $u_0$ which is greater than the final target $\widehat{u}(\cdot,T)$. As in the previous case, we first solve the optimization problem \eqref{minT}-\eqref{min-opt} using \texttt{CasADi} to determine $T_{\rm min}$. We obtain $T_{\rm min} = 0.5713$. Figure \ref{fig6} shows that in this time horizon the fractional heat equation \eqref{eq-main} is controllable and we can reach $\widehat{u}(\cdot,T)$ from $u_0$. We again observe that the minimal-time control has an impulse nature, see Figures \ref{fig7} and \ref{fig8}.

\begin{SCfigure}[0.4][h!]
	\centering 
	\includegraphics[scale=0.25]{./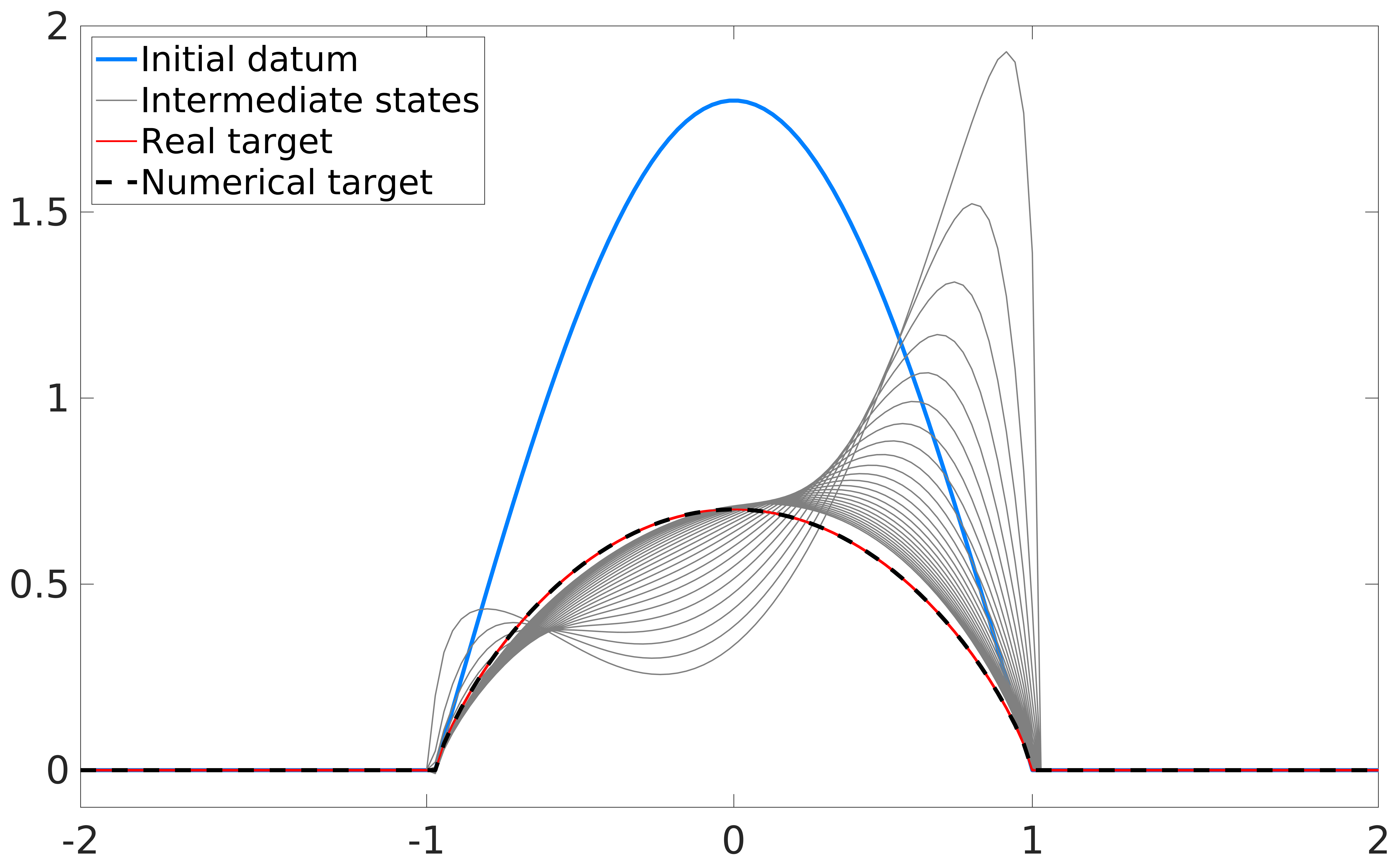}
	\caption{Evolution of the solution to \eqref{HE-approx} in the time interval $(0,T_{\min})$ with $s=0.8$. The blue line is the initial configuration $u_0$. The red line is the target $\widehat{u}(\cdot,T)$ we aim to reach. The black dashed line is the target we computed numerically.}
	\label{fig6}
\end{SCfigure}

\begin{SCfigure}[0.6][h!]
	\includegraphics[scale=0.55]{./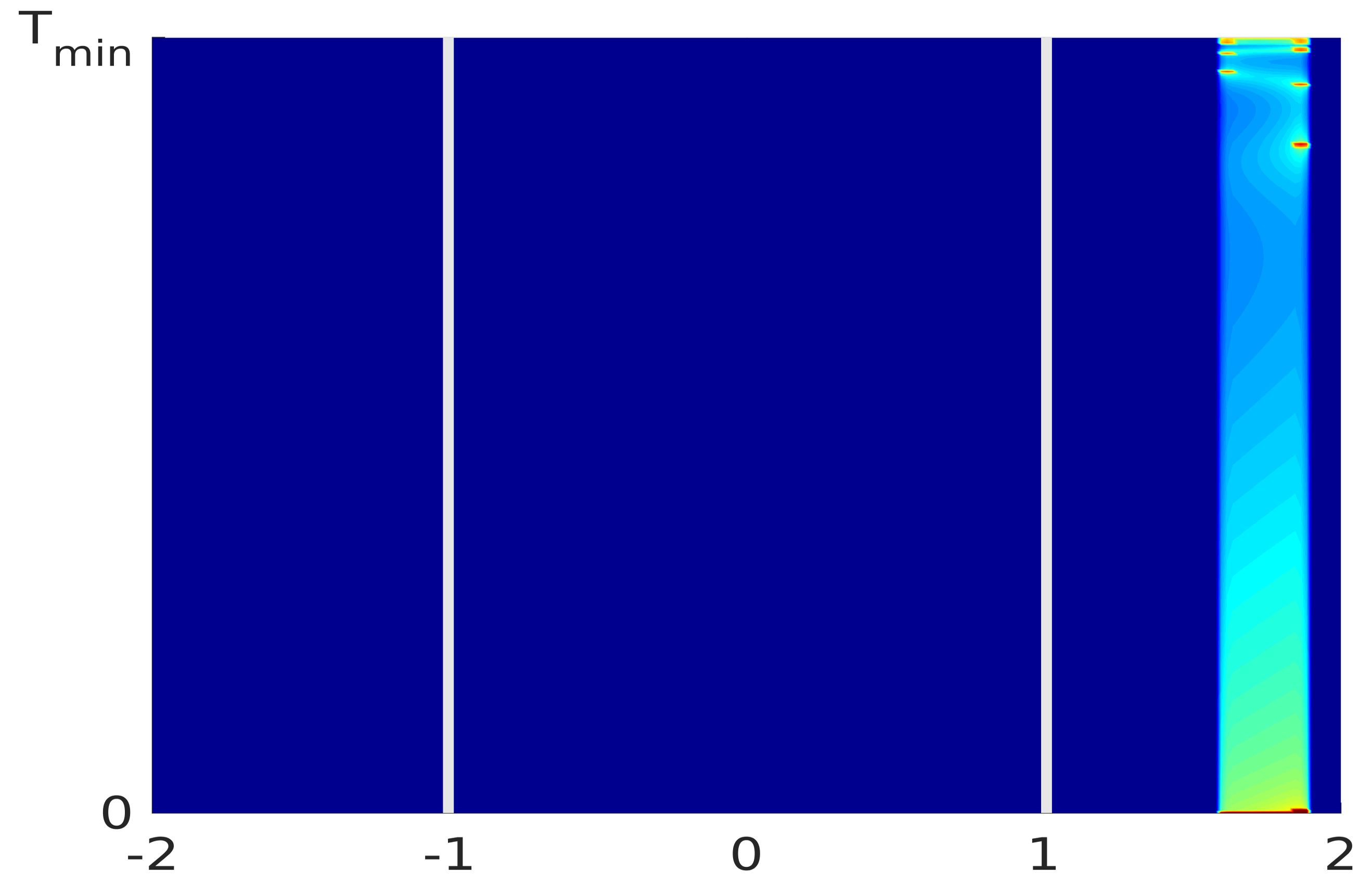}
	\caption{Minimal-time control: space-time distribution of the control. The white lines delimit the dynamics region $(-1,1)$.}
	\label{fig7}
\end{SCfigure}

\begin{SCfigure}[0.6][h!]
	\centering 
	\includegraphics[scale=0.65]{./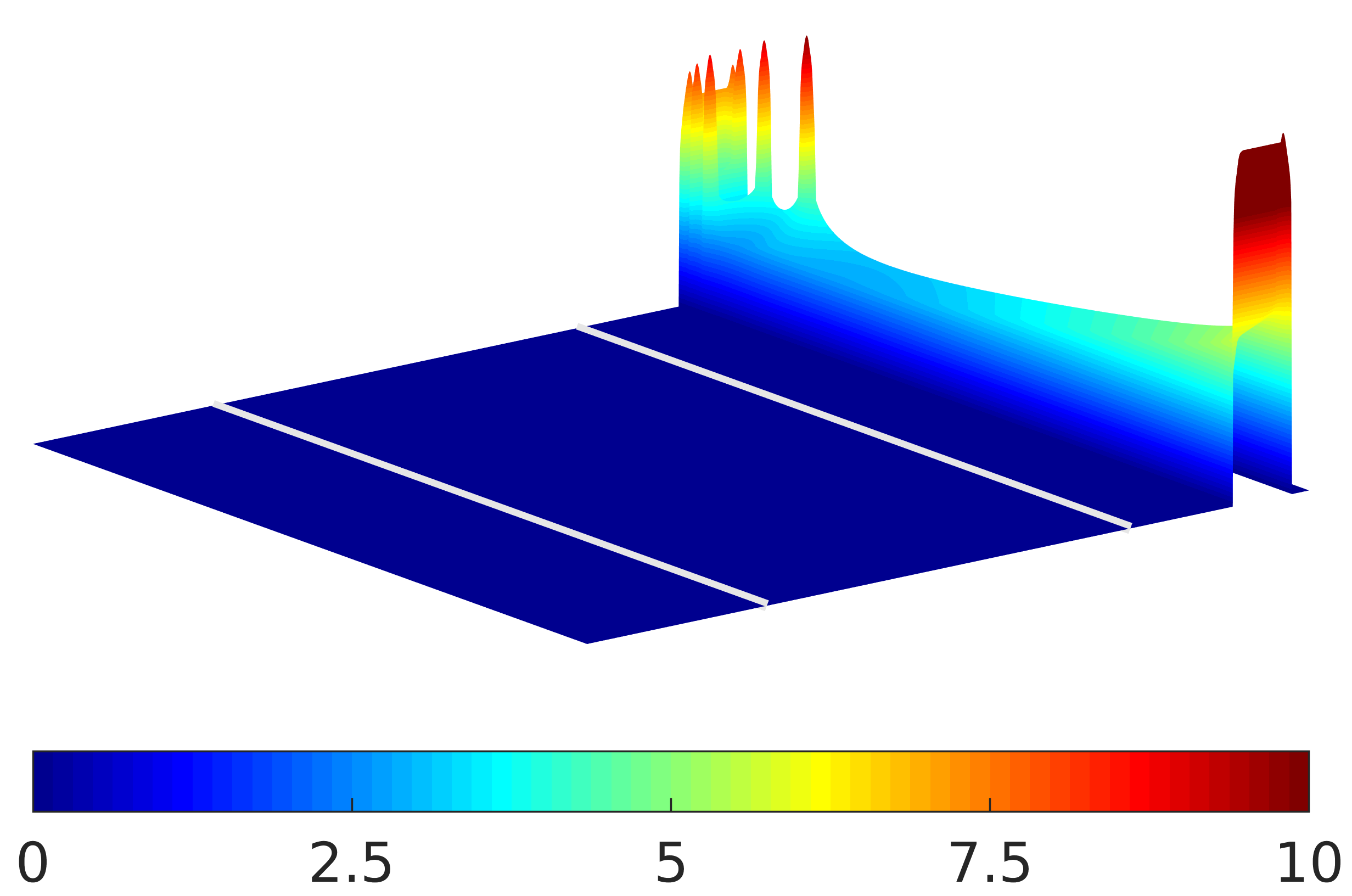}
	\caption{Minimal-time control: intensity of the impulses in logarithmic scale. In the $(x,t)$ plane in blue the time $t$ varies from $t = 0$ (bottom) to $t = T_{\rm min}$ (top).}
	\label{fig8}
\end{SCfigure}

Notice that, this time, we want to reach a target which is below the initial datum $u_0$. To achieve that, the control acts by countervailing the natural dissipation of the fractional heat process, by acting on the solution to \eqref{eq-main} with a positive force. In the end, increases its intensity to reach the desired trajectory.

Since $g$ is not allowed to push itself down (due to the constraints), intuitively we expected to see $g$ to be inactive, at least initially, to 
let the equation dissipate under the action of the heat semigroup. The control becames active only when the solution is close to the final target to do final adjustments. This is what has been observed in \cite{biccari2019controllability} when the control is in the interior of the domain $(-1,1)$. However, our numerical experiments shows that this intuition is no longer valid in the case of the exterior control. This is another example of the fact that the action of the exterior control is very different than the existing notion of interior or boundary controls.

Finally, when considering a time horizon $T<T_{\rm min}$ we again notice that we cannot reach the desired trajectory $\widehat{u}(\cdot,T)$. 
In fact, since we want to reach a final target which is below the initial datum $u_0$, the natural approach is to push down the state with a ``negative'' action. However, since the control is not allowed to do this because of the non-negativity constraint, its best option is to remain inactive for the entire time interval and to let the solution diffuses under the action of the fractional heat semi-group (see Figures \ref{fig9} and \ref{fig10}). But this is not sufficient to reach the target in the time horizon provided.

\begin{SCfigure}[0.6][h!]
	\includegraphics[scale=0.25]{./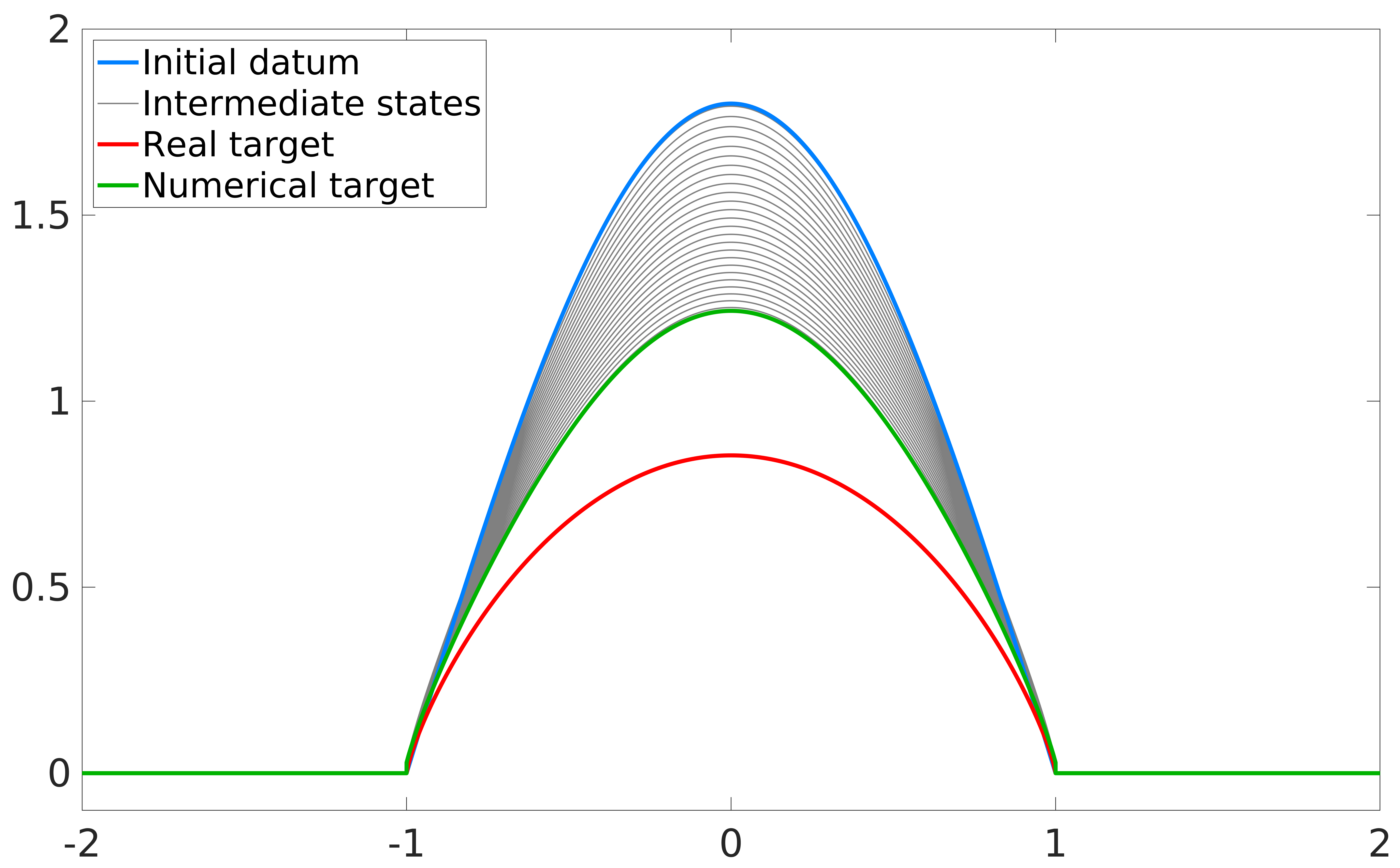}
	\caption{Evolution of the solution to \eqref{HE-approx} in the time interval $(0,T_{\min})$ with $s=0.8$. The blue line is the initial configuration $u_0$. The red line is the target $\widehat{u}(\cdot,T)$ we aim to reach. The green line is the target we computed numerically. The equation is not controllable.}
	\label{fig9}
\end{SCfigure}

\begin{SCfigure}[0.6][h!]
	\centering 
	\includegraphics[scale=0.65]{./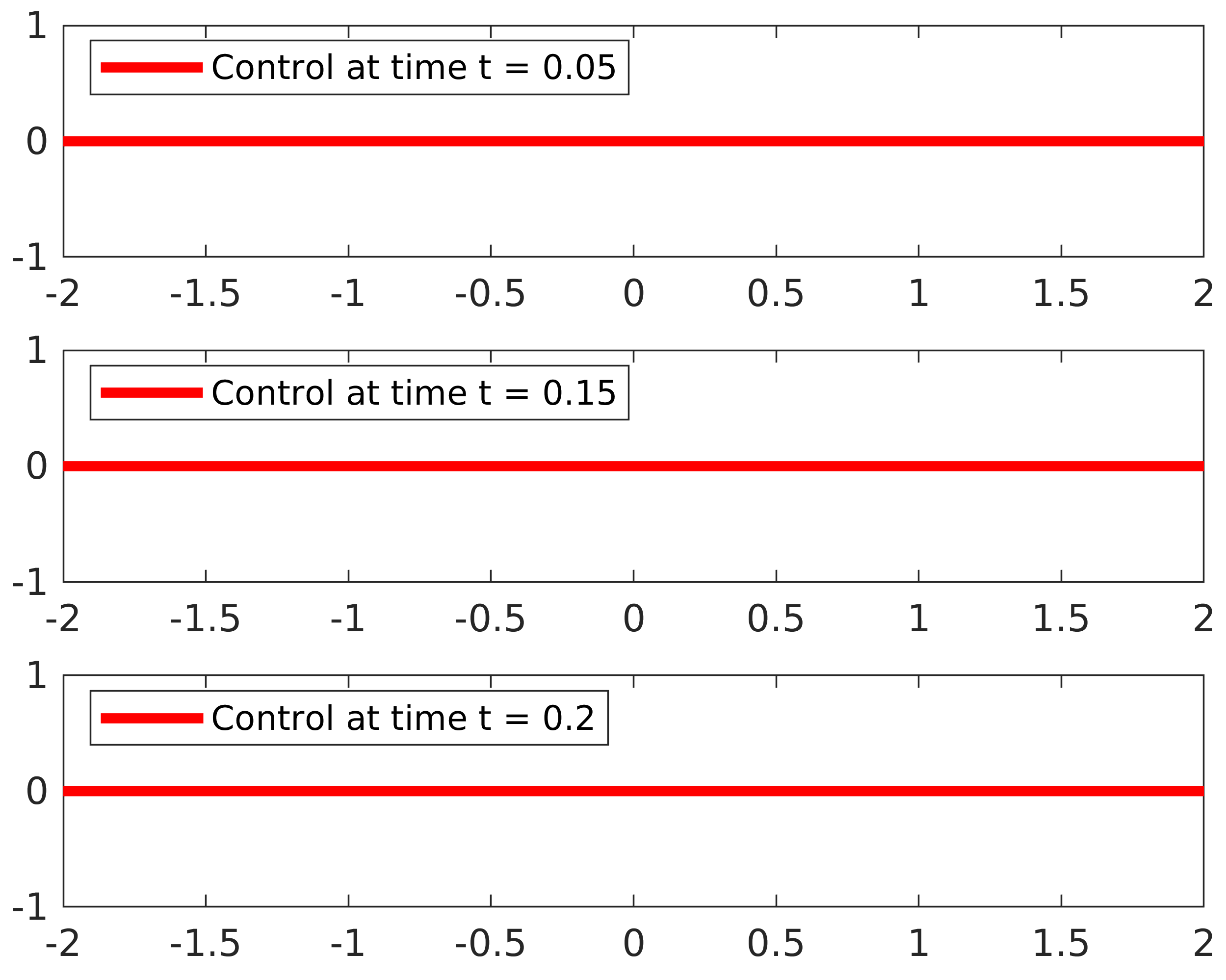}
	\caption{Control corresponding to the dynamics of Figure \ref{fig9}. The control is inactive for the entire time horizon.}
	\label{fig10}
\end{SCfigure}

\section{Concluding remarks}\label{sec-con-rem}

In this paper, we have studied the exterior controllability to trajectories for a one-dimensional fractional heat equation under nonnegativity state and control constraints. This extends our previous analysis presented in \cite{biccari2019controllability} for the case of interior controls.

For $s>1/2$, when the interior and exterior controllabilities for the unconstrained fractional heat equation holds in any positive time $T>0$, we have shown that the introduction of state or control constraints creates a positive minimal time $T_{\rm min}$ for achieving the same result. Moreover, we have also proved that, in this minimal time, exterior constrained controllability holds with controls in the space of Radon measures. 

Our results, which are in the same spirit of the analogous ones obtained in \cite{biccari2019controllability,loheac2017minimal,pighin2018controllability}, are supported by the numerical simulations in Section \ref{sec-num}.

We present hereafter a non-exhaustive list of open problems and perspectives related to our work.

\begin{itemize}
	\item[1.] \textbf{Extension to the multi-dimensional case.} Our analysis, based on spectral techniques, applies only to a one-dimensional fractional heat equation. The extension to multi-dimensional problems on bounded domains $\Omega\subset\RR^N$, $N\geq 1$ is still completely open, even in the unconstrained case. This would require different tools such as Carleman estimates. Nevertheless, obtaining Carleman estimates for the fractional Laplacian is a very difficult issue which has been considered only partially, and only for problems defined on the whole Euclidean space $\RR^N$ (see, e.g., \cite{ruland2015unique}). The case of bounded domains remains currently unaddressed and it is quite challenging. As one expects, the main difficulties come from the nonlocal nature of the fractional Laplacian, which makes classical PDEs techniques more delicate or even impossible to use. 
	
%
	\item[2.] \textbf{Lower bounds for the minimal constrained controllability time.} In Section \ref{sec-num}, we gave some numerical lower bound for the minimal constrained controllability time. Nevertheless, we cannot ensure that the bounds we presented are optimal. This raises the very important issue of obtaining analytical lower bounds for the controllability time. In particular, to understand how it depends on the order $s$ of the fractional Laplacian is evidently a fundamental point to be clarified. This question was already addressed in \cite{loheac2017minimal,pighin2018controllability} for the local heat equation but, as we discussed in \cite[Section 4,4]{biccari2019controllability}, the methodology developed in those works does not apply immediately to our case. Therefore, there is the necessity to adapt the techniques of \cite{loheac2017minimal,pighin2018controllability}, or to develop new ones. 
	
	\item[3.] \textbf{Convergence result for the minimal time.} The minimal time $T_{\rm min}$ in the simulations of Section \ref{sec-num} is just an approximation computed by solving numerically the optimization problem \eqref{minT}-\eqref{min-opt}. The validity of these computational result should be confirmed by showing that this minimal time of control for the discrete problem converges towards the continuous one as the mesh-sizes tend to zero. This could be done by adapting the procedure presented in \cite[Section 5.3]{loheac2017minimal}. Nevertheless, we have to mention that, in order to corroborate this procedure, it is required the knowledge of an analytic lower bound for $T_{\rm min}$ which, at the present stage, it is unknown (see point 2 above).\\
\end{itemize}

\noindent
{\bf Acknowledgement}:
 Part of this research was carried out while the fifth author (SZ) visited DeustoTech and the University of Deusto, Bilbao, Spain, with the financial support of the DyCon project. He would like to thank the members of this institution for their kindness and warm hospitality.

\bibliographystyle{plain}
\bibliography{biblio}

\end{document}